\documentclass[11pt]{amsart}
\usepackage{amsfonts,amssymb,amsmath,amsthm}
\usepackage{url}
\usepackage{enumerate}
\usepackage{hyperref}
\usepackage{fancyhdr}
\urlstyle{sf}
\setcounter{tocdepth}{3}
\newtheorem{theorem}{Theorem}[section]
\newtheorem{lemma}[theorem]{Lemma}
\newtheorem{conjecture}[theorem]{Conjecture}

\newtheorem{corollary}[theorem]{Corollary}
\newtheorem{example}[theorem]{Example}

\theoremstyle{definition}
\newtheorem{definition}[theorem]{Definition}

\newtheorem{remark}[theorem]{Remark}

\numberwithin{equation}{section}
\textheight=228mm
\evensidemargin=25mm
\oddsidemargin=25mm
\hoffset=-35mm
\voffset=-20mm
\topmargin=16mm
\textwidth=182mm%
\parindent=0cm

\def\notdiv{\nmid}

\def\lien{\mathrel{\mkern-4mu}}
\def\too{\relbar\lien\rightarrow}
\def\tooo{\relbar\lien\relbar\lien\too}

\def\Q{\mathbb{Q}}
\def\Z{\mathbb{Z}}
\def\F{\mathbb{F}}

\def\Cl{{\mathcal C}\hskip-2pt{\ell}}

\def\Frac#1#2{\hbox{\footnotesize $\displaystyle \frac{#1}{#2}$}}
\def\plus{\displaystyle\mathop{\raise 2.0pt \hbox{$\bigoplus $}}\limits}
\def\prd{\displaystyle\mathop{\raise 2.0pt \hbox{$\prod$}}\limits}
\def\sm{\displaystyle\mathop{\raise 2.0pt \hbox{$\sum$}}\limits}
\def\ov{\overline}
\def\wt{\widetilde}
\def\es{\emptyset}
\def\ds{\displaystyle}
\def\cl{c\hskip-1pt{\ell}}

\def\order{\raise1.5pt \hbox{${\scriptscriptstyle \#}$}}

\author[Georges Gras]{Georges Gras}
\address{Villa la Gardette \\ chemin Ch\^ateau Gagni\`ere \\ F--38520 Le Bourg d'Oisans.}
\email{g.mn.gras@wanadoo.fr}

\keywords{ Abelian $S$-ramification; class field theory; $p$-adic regulators; 
Leopoldt's conjecture; class groups, units, pro-$p$-groups, $\Z_p$-extensions}

\subjclass{Primary 11R37; Secondary 11F85; 11R34; 11Y40}

\begin{document}
 
\title[Incomplete abelian $p$-ramification] {Practice of incomplete 
$p$-ramification \\ over a number field \\ \vspace{0.2cm}
{\scriptsize Appendix}: \\
History of abelian $p$-ramification}

\date{June 25, 2019}

\begin{abstract} {The theory of $p$-ramification, regarding the Galois group of 
the maximal pro-$p$-extension of a number field $K$, unramified outside $p$ 
and $\infty$, is well known including numerical experiments with PARI/GP 
programs. The case of ``incomplete $p$-ramification'' (i.e., when the set $S$ 
of ramified places is a strict subset of the set $P$ of the $p$-places) 
is, on the contrary, mostly unknown in a theoretical point of view.
We give, in a first part, a way to compute, for any $S \subseteq P$, the 
structure of the Galois group of the maximal $S$-ramified abelian pro-$p$-extension 
$H_{K,S}$ of any field $K$ given by means of an irreducible polynomial. 
We publish PARI/GP programs usable without any special prerequisites. 
Then, in an Appendix, we recall the ``story'' of abelian $S$-ramification 
restricting ourselves to elementary aspects in order to precise much basic 
contributions and references, often disregarded, which may be used by specialists
of other domains of number theory. Indeed, the torsion ${\mathcal T}_{K,S}$ of 
${\rm Gal}(H_{K,S}/K)$ (even if $S=P$) is a fundamental obstruction in many 
problems. All relationships involving $S$-ramification, as Iwasawa's theory, Galois 
cohomology, $p$-adic $L$-functions, elliptic curves, algebraic geometry, 
would merit special developments, which is not the purpose of this text.}
\end{abstract}

\maketitle

\tableofcontents

\section{Introduction and basic results}\label{section1}
Because of the numerous references needed, we cite each of them with author's initials and 
publication year, so that the reader has a chronological view of the contributions.
Many results have been collected in our book (Ed. 2005) quoted \cite{Gr3}\,[Gr2003].

\subsection{Notion of Galois $S$-ramification}
\smallskip
Let $p \geq 2$ be a prime number and let $K$ be a number field; we denote by
$P:= \{{\mathfrak p} \hbox{ prime},\ \,{\mathfrak p}\! \mid\! p\}$ the set of $p$-places 
of $K$ and by $S$ an arbitrary set of finite places (later we shall assume $S \subseteq P$). 

\smallskip
A main problem in Galois theory above $K$ is to study the Galois group 
${\mathcal G}_{K,S}$ of the maximal pro-$p$-extension of $K$ which is
$S$-ramified in the ordinary sense (i.e., unramified outside $S$ and 
non-complexified (= totally split) at the real infinite places of $K$ when $p=2$). 

\smallskip
As we will recall it in detail, in Section \ref{story}, the study of ${\mathcal G}_{K,S}$ 
goes back to fundamental contributions of Serre \cite{Se1}\,[Ser1964], 
\v Safarevi\v c \cite{Sa}\,[Sha1964], Brumer \cite{Br}\,[Bru1966], and has been largely extended, 
from the 1980's, in much works considering $S$-ramification (eventually 
with decomposition of another set $\Sigma$ of finite and infinite places). 

\smallskip
The analogous theory for a local base field has also a long history that we 
shall not consider in this article.

\subsection{Main cohomological invariants}
\smallskip
For complete current information about  the ``cohomology of number fields'', 
see the book of Neukirch--Schmidt--Wingberg \cite[Chapter X]{NSW}\,[NSW2000].

\smallskip
When $S=P$, the $\F_p$-dimension of ${\rm H}^1({\mathcal G}_{K,P},\Z/p\Z)$, 
which gives the minimal number of generators of ${\mathcal G}_{K,P}$, is the 
$p$-rank\,\footnote{\,As usual, the $p$-rank of an abelian group $A$ is the 
$\F_p$-dimension of $A/A^p$.} of the abelianization:
$${\mathcal A}_{K,P} := {\mathcal G}_{K,P}^{\rm ab}:=
{\mathcal G}_{K,P}/[{\mathcal G}_{K,P},{\mathcal G}_{K,P}].$$

Denote by $(r_1, r_2)$ the signature of $K$ (whence $r_1+2\, r_2=[K : \Q]$);
then, the $\F_p$-dimension of ${\rm H}^2({\mathcal G}_{K,P}, \Z/p\Z)$, 
which gives the minimal number of relations between these generators,
fulfills the identity:
$${\rm rk}_p({\rm H}^1({\mathcal G}_{K,P},\Z/p\Z)) = 
{\rm rk}_p({\rm H}^2({\mathcal G}_{K,P},\Z/p\Z)) + r_2+1, $$

\noindent
giving, for the torsion group ${\mathcal T}_{K,P}$ of ${\mathcal A}_{K,P}$
{\it under Leopoldt's conjecture}:
$${\rm rk}_p({\mathcal T}_{K,P})\! =\! {\rm rk}_p({\rm H}^2({\mathcal G}_{K,P},\Z/p\Z)). $$

\subsection{Class field theory}
\smallskip
In the general case for $S$ (possibly containing tame places and 
not all the $p$-places) we may write:
\begin{equation}\label{AT}
{\mathcal A}_{K,S} = \Gamma_{K,S} \plus  {\mathcal T}_{K,S}, 
\ \hbox{with}\  \Gamma_{K,S} \simeq \Z_p^{\wt r_{K,S}^{}}, 
\end{equation}

\noindent
where ${\mathcal T}_{K,S} := {\rm tor}_{\Z_p}({\mathcal A}_{K,S})$
and $\wt r_{K,S}^{} \geq 0$. 

Without any $p$-adic assumption on the group of global units of $K$, 
we still have ${\rm rk}_p({\rm H}^1({\mathcal G}_{K,S},\Z/p\Z)) 
= {\rm rk}_p({\mathcal A}_{K,S})$, but $\wt r_{K,S}^{}$ 
(called the $\Z_p$-rank of ${\mathcal A}_{K,S}$)
is more difficult when $S \subsetneq P$; however,
${\rm rk}_p({\mathcal A}_{K,S}) = \wt r_{K,S}^{} + 
{\rm rk}_p({\mathcal T}_{K,S})$ is computable in 
complete generality with the invariants of class field 
theory for $K$ as follows (\v Safarevi\v c formula):

\smallskip
Let $K_{(S)}^\times$ be the subgroup of $K^\times$ of elements 
prime to $S$ and for any ${\mathfrak p} \in S$, let $K_{\mathfrak p}$ be
the completion of $K$ at ${\mathfrak p}$; then:
\begin{equation}\label{cha}
{\rm rk}_p({\mathcal A}_{K,S})  = {\rm rk}_p \big(V_{K,S}/K_{(S)}^{\times p} \big) 
 + \sm_{{\mathfrak p} \in S \, \cap\, P} [K_{\mathfrak p}  : \Q_p] 
+ \sm_{{\mathfrak p} \in S} \delta_{\mathfrak p} - \delta_K - ( r_1 + r_2 -1) , 
\end{equation}

\noindent
where $V_{K,S} := \big \{ \alpha \in K_{(S)}^{\times},\  (\alpha) = {\mathfrak a}^p 
\  \hbox{for an ideal ${\mathfrak a}$ of $K$}\big\}$,
$\delta_{\mathfrak p} = 1$ or $0$ accor\-ding as $K_{\mathfrak p}$ contains
$\mu_p$ or not, and $\delta_K = 1$ or $0$ according as $K$ contains $\mu_p$ or not.
Thus, from the relation  \eqref{AT}:
\begin{equation}\label{chaT}
{\rm rk}_p({\mathcal T_{K,S}})  = {\rm rk}_p({\mathcal A}_{K,S})  - \wt r_{K,S}^{} 
 =  {\rm rk}_p \big (V_{K,S}/K_{(S)}^{\times p} \big ) 
 + \Big[ \sm_{{\mathfrak p} \in S  \, \cap\,  P} [K_{\mathfrak p}  : \Q_p]  - \wt r_{K,S}^{}\Big]
+ \sm_{{\mathfrak p} \in S} \delta_{\mathfrak p} - \delta_K - ( r_1 + r_2 -1),
\end{equation}

\noindent
where $\wt r_{K,S}^{}$ fulfills the following formula:
\begin{equation}\label{rank}
 \sm_{{\mathfrak p} \in S  \, \cap\,  P} [K_{\mathfrak p}  : \Q_p] 
- \wt r_{K,S}^{} =  {\rm dim}_{\Q_p} \big( \Q_p {\rm log}_{S  \, \cap\,  P}(E_K) \big), 
\end{equation}

\noindent
where $E_K$ is the group of global units of $K$ and
${\rm log}_{S  \, \cap\,  P} := \big({\rm log}_{\mathfrak p} \big)_{{\mathfrak p} \in S  \, \cap\, P}$
the family of $p$-adic logarithms over $S  \, \cap\, P$ with values in 
$\bigoplus_{{\mathfrak p} \in S \,\cap\, P} K_{\mathfrak p}$. Note that for $S=P$,
$r_{K,P}^{} := {\rm dim}_{\Q_p} \big( \Q_p {\rm log}_{P}(E_K) \big)$
is the $p$-adic rank of $E_K$ (i.e., the $\Z_p$-rank of the closure of
the image $\iota_P^{}(E_K)$ of $E_K$ in the group of local principal units $U_{K,P}$, 
where $\iota_P^{}$ is the diagonal embedding; see \S\;\ref{section2}).

\smallskip
The \v Safarevi\v c and reflection formulas, generalized with decomposition, 
may be obtained via \cite[Exercise II.5.4.1]{Gr3}\,[Gr2003] or other classical references.

\smallskip
In general, $\wt r_{K,S}^{}$ is non-obvious and varies from $0$ to $r_2+1$ 
(see Wingberg \cite{Win1,Win2}\,[Win1989-1991], Yamagishi \cite{Ya}\,[Yam1993], 
Maire \cite{M01,M02,M1}\,[Mai2002-2003-2005], Labute \cite{Lab}\,[Lab2006], \cite{LM}\,[LM2011],
Vogel \cite{Vog}\,[Vog2007] for some results and cases where 
${\mathcal G}_{K,S}$ may be free with less than $r_2+1$ generators and our
forthcoming numerical results showing that many $\Z_p$-ranks can occur).

\smallskip
For $S=P$ we obtain $\wt r_{K,P}^{} = r_2+1$, under the Leopoldt conjecture, giving
(since $\sum_{{\mathfrak p} \in P} [K_{\mathfrak p}  : \Q_p]=r_1+2\,r_2$):
\begin{equation}\label{rankP}
{\rm rk}_p({\mathcal T}_{K,P}) = {\rm rk}_p(V_{K,P}/K_P^{\times p}) +
\sm_{{\mathfrak p} \in P} \delta_{\mathfrak p} - \delta_K .
\end{equation}

If $S = \es$ then ${\mathcal A}_{K,S} =  {\mathcal T}_{K,S} =: \Cl_K$, the 
$p$-class group of $K$ (ordinary sense).

\begin{remark}
We shall not consider $S$-ramification with $S=P \,\cup\, T$, when $T$ is 
a finite set of tame places, because of the following exact sequence,
{\it under the Leopoldt conjecture} (Neumann \cite{Neum0}\,[Neu1975],
Nguyen Quang Do
\cite[Corollary 4.3]{Ng2}\,[Nqd1986], \cite[Theorem III.4.1.5]{Gr3}\,[Gr2003]),
where the $F_{\mathfrak l}$ are the residue fields:\par

\medskip
\centerline{$1 \too \plus_{{\mathfrak l} \in T} (F_{\mathfrak l}^\times \otimes \Z_p)
\tooo\! {\mathcal T}_{K,P \,\cup\, T} \tooo {\mathcal T}_{K,P} \too 1$.}
\end{remark}

For some specialized applications (about number fields, elliptic 
curves, representation theory, Galois cohomology, Iwasawa's theory, 
$p$-adic $L$-functions) and some recent conjectures, one needs to study 
and compute the above $S$-invariants when $S$ is a subset of $P$ 
and $K/\Q$ not necessarily Galois. Even if $K/\Q$ is Galois, 
the Galois group does not necessarily operate on $S$.
So the classical algebraic considerations
(cohomology, Iwasawa's theory) largely collapse.

\smallskip
So the most tricky invariants in ``incomplete $P$-ramification'' are 
$$\hbox{${\mathcal T}_{K,S}\ \ $ and $\ \ \wt r_{K,S}^{} = 
{\rm rk}_p({\mathcal A}_{K,S}) - {\rm rk}_p({\mathcal T}_{K,S})
= \sm_{{\mathfrak p} \in S} [K_{\mathfrak p}  : \Q_p]
- {\rm dim}_{\Q_p} \big( \Q_p {\rm log}_{S}(E_K) \big)$.}$$

Of course, they highly depend on the decomposition of the 
prime $p$ in the Galois closure of $K$ and probably of specific
$p$-adic properties of units; but it remains the class field theory 
framework above the base field $K$.

\section{General $p$-adic context of $S$-ramification}
Consider a number field $K$ and a given prime $p \geq 2$.
Let $S$ be a subset of the set $P$ of the $p$-places of $K$
and let $H_{K,S}$ be the maximal {\it abelian} $S$-ramified
pro-$p$-extension of $K$; this field contains a (maximal) compositum 
$\wt {K\,}^S$ of $\Z_p$-extensions of $K$ and always the $p$-Hilbert class 
field $H_K:= H_{K,\es}$ of $K$. 

\smallskip
These definitions are given in the ordinary sense when $p=2$ 
(so that the real infinite places of $K$ are not complexified in the 
class fields considered; in other words they are totally split).

\subsection{Fundamental exact sequences}\label{section2}
\smallskip
Let $U_{K,S} := \bigoplus_{{\mathfrak p} \in S} U_{\mathfrak p}$, be the
product of the groups of principal local units of $K_{\mathfrak p}$, 
${\mathfrak p} \in S$, and let $\ov {E\,}^S_{\!\!K}$ be the 
closure of the image $\iota_S^{}(E_K)$ of $E_K$ in $U_{K,S}$.

\smallskip
We denote by $W_{K,S} = \bigoplus_{{\mathfrak p} \in S}\mu^{}_{K_{\mathfrak p}}$ 
the torsion group of the $\Z_p$-module $U_{K,S}$.

\smallskip
If $K/\Q$ is Galois and $S \subsetneq P$, $U_{K,S}$ is not necessarily a Galois module,
which increases the difficulties.

\smallskip
The following $p$-adic result is valid without any assumption 
on $K$ and $S \subseteq P$:

\begin{lemma} \label{exact}
We have the exact sequence:
$$1\to W_{K,S} \big / {\rm tor}_{\Z_p}^{}(\ov {E\,}^S_{\!\!K})  \tooo 
 {\rm tor}_{\Z_p}^{} \big(U_{K,S} \big /\ov {E\,}^S_{\!\!K} \big) 
 \mathop {\tooo}^{{\rm log}_S}  {\rm tor}_{\Z_p}^{}\big({\rm log}_S\big 
(U_{K,S} \big) \big / {\rm log}_S (\ov {E\,}^S_{\!\!K}) \big) \to 0. $$
\end{lemma}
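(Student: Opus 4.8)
The plan is to realize the asserted sequence as the image, under the $\Z_p$-torsion functor, of a short exact sequence attached to the $p$-adic logarithm, and then to isolate the single place where that functor can fail to be exact.

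First I would record the two structural facts about ${\rm log}_S$ that drive everything. For each ${\mathfrak p}\in S$ the group of principal units decomposes as $U_{\mathfrak p}\simeq \mu^{}_{K_{\mathfrak p}}\times\Z_p^{[K_{\mathfrak p}:\Q_p]}$, so the local logarithm ${\rm log}_{\mathfrak p}$ is a $\Z_p$-homomorphism with kernel exactly $\mu^{}_{K_{\mathfrak p}}$ and with torsion-free image. Summing over $S$, the map ${\rm log}_S\colon U_{K,S}\to \bigoplus_{{\mathfrak p}\in S}K_{\mathfrak p}$ is a $\Z_p$-homomorphism whose kernel is precisely $W_{K,S}$ and whose image $L:={\rm log}_S(U_{K,S})$ is torsion-free. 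I will write $\ov E:=\ov {E\,}^S_{\!\!K}$ throughout.

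Next I would build the underlying short exact sequence. Because ${\rm log}_S$ is surjective onto $L$ with kernel $W_{K,S}$, passing to the quotients by $\ov E$ and by ${\rm log}_S(\ov E)$ yields a surjection $U_{K,S}/\ov E \to L/{\rm log}_S(\ov E)$ whose kernel is $(W_{K,S}+\ov E)/\ov E\simeq W_{K,S}/(W_{K,S}\cap \ov E)$. Since $W_{K,S}$ is the full torsion of $U_{K,S}$, one has $W_{K,S}\cap\ov E={\rm tor}_{\Z_p}(\ov E)$, so this kernel is $W_{K,S}/{\rm tor}_{\Z_p}(\ov E)$. This gives the short exact sequence
$$0\too W_{K,S}/{\rm tor}_{\Z_p}(\ov E)\tooo U_{K,S}/\ov E\ \mathop{\tooo}^{{\rm log}_S}\ L/{\rm log}_S(\ov E)\too 0.$$

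Finally I would apply ${\rm tor}_{\Z_p}(-)$, which is left exact. As the left-hand term $W_{K,S}/{\rm tor}_{\Z_p}(\ov E)$ is a quotient of the torsion group $W_{K,S}$, it equals its own torsion; hence left-exactness delivers exactness of the claimed sequence at its first two terms for free, and it only remains to show that ${\rm tor}_{\Z_p}(U_{K,S}/\ov E)\to{\rm tor}_{\Z_p}\big(L/{\rm log}_S(\ov E)\big)$ is onto. This surjectivity — the failure of right-exactness of the torsion functor — is the only real obstacle, and it is overcome precisely because $\ker({\rm log}_S)=W_{K,S}$ is torsion. Concretely, given a torsion class represented by $x\in L$ with $p^n x={\rm log}_S(e)$ for some $e\in\ov E$, lift $x$ via surjectivity of ${\rm log}_S$ to $m\in U_{K,S}$ with ${\rm log}_S(m)=x$; then ${\rm log}_S(p^n m-e)=0$ forces $p^n m-e\in W_{K,S}$, so in $U_{K,S}/\ov E$ one gets $p^n\,\ov m=\ov w$ with $w\in W_{K,S}$ torsion. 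Hence $\ov m$ is itself torsion in $U_{K,S}/\ov E$ and maps to the given class, which yields the surjectivity and completes the proof.
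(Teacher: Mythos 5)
Your proof is correct and follows essentially the same route as the paper's: both rest on the identifications $\ker({\rm log}_S)=W_{K,S}$ and $W_{K,S}\cap \ov {E\,}^S_{\!\!K}={\rm tor}_{\Z_p}(\ov {E\,}^S_{\!\!K})$ for the kernel, and prove surjectivity by lifting a torsion class of ${\rm log}_S(U_{K,S})/{\rm log}_S(\ov {E\,}^S_{\!\!K})$ to some $m\in U_{K,S}$ and observing that $m^{p^n}\ov\varepsilon^{-1}\in W_{K,S}$ forces $m$ to be torsion modulo $\ov {E\,}^S_{\!\!K}$. Your only departure is organizational — packaging the kernel computation as a short exact sequence of full quotients followed by left-exactness of ${\rm tor}_{\Z_p}(-)$, where the paper verifies the kernel directly — which is a matter of presentation, not of substance.
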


\begin{proof} 
 The surjectivity comes from the fact that if
$u \in U_{K,S}$ is such that $p^n {\rm log}_S(u) = {\rm log}_S(\ov\varepsilon)$, 
$\ov\varepsilon \in \ov {E\,}^S_{\!\!K}$, then $u^{p^n} = \ov\varepsilon \cdot \xi$ 
for $\xi \in W_{K,S}$, hence there exists $m\geq n$ such that $u^{p^m} \in \ov {E\,}^S_{\!\!K}$, 
whence $u$ gives a preimage in ${\rm tor}_{\Z_p}^{} \big(U_{K,S} \big / \ov {E\,}^S_{\!\!K} \big)$.
If $u \in U_{K,S}$ is such that ${\rm log}_S(u) \in {\rm log}_S(\ov {E\,}^S_{\!\!K})$, then
$u = \ov \varepsilon \cdot \xi$ as above, giving the kernel equal to
$\ov {E\,}^S_{\!\!K} \cdot W_{K,S} /\ov {E\,}^S_{\!\!K} = 
W_{K,S}/ {\rm tor}_{\Z_p}^{}(\ov {E\,}^S_{\!\!K})$.
\end{proof}

Put ${\mathcal W}_{K,S}:= W_{K,S} / {\rm tor}_{\Z_p}^{}(\ov {E\,}^S_{\!\!K})$ and
${\mathcal R}_{K,S} := 
{\rm tor}^{}_{\Z_p} \big ({\rm log}_S (U_{K,S}) / {\rm log}_S (\ov {E\,}^S_{\!\!K})\big)$.
Then the exact sequence of Lemma \ref{exact} becomes:
\begin{equation}
1\too {\mathcal W}_{K,S} \tooo  {\rm tor}_{\Z_p}^{} 
\big(U_{K,S} \big / \ov {E\,}^S_{\!\!K} \big) 
\mathop {\tooo}^{{\rm log}_S} {\mathcal R}_{K,S} \too 0.
\end{equation}

\begin{lemma}\label{leo} Let $\mu_K$ be 
the group of roots of unity of $p$-power order of $K$. 
Under the Leopoldt conjecture for $p$ in $K$ we have
${\rm tor}_{\Z_p}^{}(\ov {E\,}^P_{\!\!K}) = \iota_P^{}(\mu_K)$; thus, in that case,
${\mathcal W}_{K,P}= W_{K,P} /\iota_P^{}(\mu_K)$.
\end{lemma}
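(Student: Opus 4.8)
The plan is to identify the torsion of $\ov {E\,}^P_{\!\!K}$ with the kernel of the $p$-adic logarithm restricted to it, and then to invoke Leopoldt's conjecture at exactly one point to pin down that kernel. First I would record the two structural facts on which everything rests. Since $U_{K,P}$ is a $\Z_p$-module of finite type whose torsion submodule is $W_{K,P}$, and since ${\rm log}_P\colon U_{K,P}\to \bigoplus_{{\mathfrak p}\in P}K_{\mathfrak p}$ has kernel exactly $W_{K,P}$ (the $p$-adic logarithm being injective modulo torsion on principal units), the torsion of any submodule is its intersection with $W_{K,P}$; in particular
\[
{\rm tor}_{\Z_p}^{}(\ov {E\,}^P_{\!\!K}) = \ov {E\,}^P_{\!\!K}\cap W_{K,P} = \{\,x\in \ov {E\,}^P_{\!\!K} : {\rm log}_P(x)=0\,\}.
\]
On the other side, $\ov {E\,}^P_{\!\!K}$ is the image of the continuous $\Z_p$-linear extension $\ov\iota\colon E_K\otimes_\Z\Z_p \to U_{K,P}$ of $\iota_P^{}$: this image is compact, hence closed, it contains $\iota_P^{}(E_K)$ and lies in its closure, so it coincides with $\ov {E\,}^P_{\!\!K}$; and by Dirichlet's theorem $E_K\otimes_\Z\Z_p \simeq \mu_K\oplus\Z_p^{\,t}$ with $t=r_1+r_2-1$.

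Next, the inclusion $\iota_P^{}(\mu_K)\subseteq {\rm tor}_{\Z_p}^{}(\ov {E\,}^P_{\!\!K})$ is immediate, since $\mu_K\subseteq E_K$ is finite and $p$-power roots of unity are principal units at the places above $p$, so their image lands in $W_{K,P}$. For the reverse inclusion I would take $x\in\ov {E\,}^P_{\!\!K}$ with ${\rm log}_P(x)=0$, pick $y\in E_K\otimes_\Z\Z_p$ with $\ov\iota(y)=x$, and decompose it as $y=\zeta+u$ along $\mu_K\oplus\Z_p^{\,t}$. Since $\ov\iota(\zeta)=\iota_P^{}(\zeta)\in W_{K,P}=\ker{\rm log}_P$, the vanishing ${\rm log}_P(x)={\rm log}_P(\ov\iota(\zeta))+{\rm log}_P(\ov\iota(u))=0$ forces ${\rm log}_P(\ov\iota(u))=0$. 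This is exactly where the hypothesis enters: Leopoldt's conjecture asserts that $r_{K,P}^{}={\rm dim}_{\Q_p}\big(\Q_p\,{\rm log}_P(E_K)\big)$ equals $t$, i.e.\ that ${\rm log}_P\circ\ov\iota$ is injective on the free part $\Z_p^{\,t}$. Hence $u=0$, so $x=\iota_P^{}(\zeta)\in\iota_P^{}(\mu_K)$. Combining with the first displayed identity gives ${\rm tor}_{\Z_p}^{}(\ov {E\,}^P_{\!\!K})=\iota_P^{}(\mu_K)$, and then ${\mathcal W}_{K,P}=W_{K,P}/\iota_P^{}(\mu_K)$ by the very definition of ${\mathcal W}_{K,S}$.

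The main obstacle is entirely concentrated in that injectivity step, and it is the only place the Leopoldt hypothesis is used: without it, ${\rm log}_P\circ\ov\iota$ could have a nontrivial kernel on $\Z_p^{\,t}$, yielding torsion in $\ov {E\,}^P_{\!\!K}$ strictly larger than $\iota_P^{}(\mu_K)$ — namely a $\Z_p$-combination of fundamental units whose logarithm vanishes without the combination being a root of unity. Everything else is formal finitely generated $\Z_p$-module theory. The only routine points I would verify are that $\iota_P^{}$ is genuinely injective on $\mu_K$ (clear, as $K\hookrightarrow K_{\mathfrak p}$ is injective for any ${\mathfrak p}\in P$) and that $\mu_K$ really does inject into $W_{K,P}$, i.e.\ that $p$-power roots of unity are $1$-units at the $p$-places.
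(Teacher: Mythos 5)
Your argument is correct, and it is worth noting that it is \emph{not} the paper's route: the paper gives no self-contained proof at all, its ``proof'' being a citation of Jaulent [Jau1998, D\'efinition 2.11, Proposition 2.12] and of [Gr2003, Theorem III.3.6.2\,(vi)], where the structure of $\ov {E\,}^P_{\!\!K}$ under Leopoldt is established as part of a general theory. What you supply instead is a direct verification of exactly that structure statement: you identify ${\rm tor}_{\Z_p}^{}(\ov {E\,}^P_{\!\!K})$ with $\ov {E\,}^P_{\!\!K}\cap \ker({\rm log}_P)$, realize $\ov {E\,}^P_{\!\!K}$ as the image of the compact module $E_K\otimes_\Z\Z_p\simeq \mu_K\oplus\Z_p^{\,t}$ under the continuous extension of $\iota_P^{}$, and then use Leopoldt in its rank form ${\rm dim}_{\Q_p}\big(\Q_p\,{\rm log}_P(E_K)\big)=t=r_1+r_2-1$ to force the free part of any element of the kernel to vanish. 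This is precisely the equivalence packaged in the cited references (Leopoldt $\Leftrightarrow$ injectivity of $E_K\otimes\Z_p\to U_{K,P}$ $\Leftrightarrow$ $\ov {E\,}^P_{\!\!K}\simeq \mu_K\oplus\Z_p^{\,t}$), so the mathematical content agrees; your version has the merit of isolating the single step where the conjecture is actually used, while the paper's citation buys the statement as a corollary of a broader structure theorem valid with decomposition and in Jaulent's $\ell$-adic framework. One convention you use tacitly and could make explicit: $\iota_P^{}$ is the diagonal embedding composed with projection onto the principal local units (the prime-to-$p$ Teichm\"uller components are discarded), which is what makes both the inclusion $\iota_P^{}(\mu_K)\subseteq W_{K,P}$ and the identity $\ker({\rm log}_P)=W_{K,P}$ on $U_{K,P}$ literally true, including for $p=2$ where $-1\equiv 1 \pmod 2$.
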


\begin{proof}
From Jaulent \cite[D\'efinition 2.11, Proposition 2.12]{Ja2}\,[Jau1998] or 
\cite[Theorem III.3.6.2\,(vi)]{Gr3}\,[Gr2003].
\end{proof}

Note that for $S \subsetneq P$, we do not know if ${\rm tor}_{\Z_p}^{}(\ov {E\,}^S_{\!\!K})$
may be larger than $\iota_S^{}(\mu_K)$ (as subgroups of $W_{K,S}$), even under the Leopoldt 
conjecture.

\subsection{Diagram of $S$-ramification}
\smallskip
Consider the following diagram under the Leopoldt 
conjecture for $p$ in $K$.
By definition, ${\mathcal T}_{K,S} =  {\rm tor}_{\Z_p} 
\big ({\mathcal A}_{K,S} \big)$ is the Galois group 
${\rm Gal}(H_{K,S} / \wt {K\,}^S)$; let
$\wt {\Cl_K\!}^{\!S}$ be the subgroup of $\Cl_K$ corresponding to
${\rm Gal}(H_K/ \wt {K\,}^S \!\! \cap H_K)$ by class field theory.
\unitlength=1.05cm 
$$\vbox{\hbox{\hspace{-3.8cm} 
 \begin{picture}(11.5,5.6)
\put(6.5,4.50){\line(1,0){1.3}}
\put(8.7,4.50){\line(1,0){2.1}}
\put(3.85,4.50){\line(1,0){1.4}}
\put(9.2,4.11){\footnotesize$\simeq\! {\mathcal W}_{K,S}$}
\put(4.2,2.50){\line(1,0){1.25}}
\bezier{350}(3.8,4.9)(7.6,5.6)(11.0,4.9)
\put(7.2,5.4){\footnotesize${\mathcal T}_{K,S}$}
\put(3.50,2.9){\line(0,1){1.25}}
\put(3.50,0.9){\line(0,1){1.25}}
\put(5.7,2.9){\line(0,1){1.25}}
\bezier{300}(3.9,0.5)(4.7,0.5)(5.6,2.3)
\put(5.2,1.3){\footnotesize$\simeq \! \Cl_K$}
\bezier{300}(6.3,2.5)(8.5,2.6)(10.8,4.3)
\put(8.4,2.7){\footnotesize$\simeq \! U_{K,S}/\ov {E\,}^S_{\!\!K}$}
\put(10.85,4.4){$H_{K,S}$}
\put(5.4,4.4){$\wt {K\,}^S\! \!H_K$}
\put(7.85,4.4){$M_{K,S}$}
\put(6.7,4.10){\footnotesize$\simeq\! {\mathcal R}_{K,S}$}
\put(4.2,4.10){\footnotesize$\simeq\! \wt {\Cl_K\!}^{\!S}$}
\put(3.3,4.4){$\wt {K\,}^S$}
\put(5.5,2.4){$H_K$}
\put(2.9,2.4){$\wt {K\,}^S\!\! \!\cap \! H_K$}
\put(3.4,0.40){$K$}
\put(8.9,1.5){\footnotesize ${\mathcal A}_{K,S}$}
\bezier{500}(3.9,0.4)(9.5,0.8)(11.0,4.3)
\end{picture}   }} $$
\unitlength=1.0cm
Then from the schema we get:
\begin{equation}\label{partial}
\order {\mathcal T}_{K,S}  = \big [H_K\! :\! \wt {K\,}^S \!\! \cap H_K \big]
 \cdot \order {\rm tor}_{\Z_p} \big( U_{K,S}\big / \ov {E\,}^S_{\!\!K} \big) 
 = \order \wt {\Cl_K\!}^{\!S} 
\cdot \order {\mathcal R}_{K,S} \cdot \order {\mathcal W}_{K,S} .
\end{equation}
Of course, for $p \geq p_0$ (explicit), $\order {\mathcal W}_{K,S} = 
\wt {\Cl_K\!}^{\!S} = 1$, whence ${\mathcal T}_{K,S}={\mathcal R}_{K,S}$.

\begin{remark}\label{rema1}
When $S=P$, we have ${\rm Gal}(H_{K,P} / H_K) 
\simeq U_{K,P}/\ov {E\,}^P_{\!\!K}$,
in which the image of ${\mathcal W}_{K,P}$ fixes $M_{K,P}=: H_K^{\rm bp}$,
the Bertrandias--Payan field, ${\rm Gal}(H_K^{\rm bp} / \wt {K\,}^P)$ being 
the Bertrandias--Payan module as named by Nguyen Quang Do
from \cite{BP}\,[BP1972] on the $p$-cyclic embedding problem. 
Then ${\mathcal R}_{K,P} \simeq {\rm Gal}(H_K^{\rm bp} / \wt {K\,}^P \!\! H_K)$.
This ``normalized regulator'' ${\mathcal R}_{K,P}$ (as a $p$-group or as a $p$-power) 
is closely related to the classical $p$-adic regulator of $K$ 
(see \cite[Proposition 5.2]{Gr7}\,[Gr2018a]).
\end{remark}

\subsection{Local computations}
\smallskip
Recall the following local computation:
\begin{theorem} \cite[Theorem I.4.5 \& Corollary I.4.5.4,
ordinary sense]{Gr3}\,[Gr2003]. \label{thmfond}
For ${\mathfrak p} \mid p$ in $K$ and $j\geq 1$, let $U_{\mathfrak p}^j$
be the group of local units $1+ \ov {\mathfrak p}^j$, where 
$\ov{\mathfrak p}$ is the maximal ideal of the ring of integers of $K_{\mathfrak p}$. 
For $S \subseteq P$, denote by ${\mathfrak m}(S)$ the modulus
$\prod_{{\mathfrak p} \in S}{\mathfrak p}^{e_{\mathfrak p}}$,
where $e_{\mathfrak p}$ is the ramification index of ${\mathfrak p}$ in $K/\Q$.

\smallskip
For a modulus of the form ${\mathfrak m}(S)^n$, $n\geq 0$, let
$\Cl_K({\mathfrak m}(S)^n)$ be the corresponding ray class group
(ordinary sense). 
Then for $m \geq n \geq 0$, we have:
$$0 \leq {\rm rk}_p(\Cl_K({\mathfrak m}(S)^m)) - {\rm rk}_p(\Cl_K({\mathfrak m}(S)^n))
\!\leq\! \sm_{{\mathfrak p} \in S} {\rm rk}_p 
\big((U_{\mathfrak p}^1)^p\,U_{\mathfrak p}^{n \cdot e_{\mathfrak p}} \!/
(U_{\mathfrak p}^1)^p\,U_{\mathfrak p}^{m \cdot e_{\mathfrak p}} \big). $$
\end{theorem}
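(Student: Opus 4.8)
The plan is to compare the two ray class groups through the canonical surjection coming from divisibility of moduli, and then to bound the difference of $p$-ranks by counting the ramification characters that become available between the two levels. The crucial point, which makes the bound sharp, is that at each ${\mathfrak p}\in S$ a $\Z/p\Z$-valued character of the ray class group kills the image of \emph{all} local $p$-th powers $(U_{\mathfrak p}^1)^p$, not merely the $p$-th powers arising inside $U_{\mathfrak p}^{n e_{\mathfrak p}}$.

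Since $m\ge n$ gives ${\mathfrak m}(S)^n \mid {\mathfrak m}(S)^m$, class field theory furnishes a canonical surjection $\rho:\Cl_K({\mathfrak m}(S)^m)\too\Cl_K({\mathfrak m}(S)^n)$; tensoring with $\F_p$ (right exactness) it stays surjective, so ${\rm rk}_p(\Cl_K({\mathfrak m}(S)^m))\ge {\rm rk}_p(\Cl_K({\mathfrak m}(S)^n))$, which is the left inequality. Put $\mathcal K:=\ker(\rho)$. From the standard exact sequence $1\to (\mathcal O_K/{\mathfrak m})^\times/\text{(image of }E_K)\to \Cl_K({\mathfrak m})\to \Cl_K\to 1$ (ordinary sense), taken for ${\mathfrak m}={\mathfrak m}(S)^m$ and ${\mathfrak m}(S)^n$, the snake lemma identifies $\mathcal K$ with the image in $\Cl_K({\mathfrak m}(S)^m)$ of the kernel of $(\mathcal O_K/{\mathfrak m}(S)^m)^\times\to(\mathcal O_K/{\mathfrak m}(S)^n)^\times$. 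By the Chinese remainder theorem and the unit filtration, that kernel is $\bigoplus_{{\mathfrak p}\in S}U_{\mathfrak p}^{n e_{\mathfrak p}}/U_{\mathfrak p}^{m e_{\mathfrak p}}$ (with the convention $U_{\mathfrak p}^0=\mathcal O_{K_{\mathfrak p}}^\times$ if $n=0$). Hence there is a canonical surjection $q:\bigoplus_{{\mathfrak p}\in S}U_{\mathfrak p}^{n e_{\mathfrak p}}/U_{\mathfrak p}^{m e_{\mathfrak p}}\too\mathcal K$.

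For the right inequality I dualize. Applying the left-exact functor $\mathrm{Hom}(-,\Z/p\Z)$ to $1\to\mathcal K\to\Cl_K({\mathfrak m}(S)^m)\xrightarrow{\rho}\Cl_K({\mathfrak m}(S)^n)\to 1$ and using $\dim_{\F_p}\mathrm{Hom}(A,\Z/p\Z)={\rm rk}_p(A)$ shows that ${\rm rk}_p(\Cl_K({\mathfrak m}(S)^m))-{\rm rk}_p(\Cl_K({\mathfrak m}(S)^n))$ equals $\dim_{\F_p}$ of the image of the restriction map $\iota^*:\mathrm{Hom}(\Cl_K({\mathfrak m}(S)^m),\Z/p\Z)\to\mathrm{Hom}(\mathcal K,\Z/p\Z)$. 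As $q$ is surjective, $q^*$ is injective, so it suffices to bound the image of $q^*\circ\iota^*$ inside $\bigoplus_{{\mathfrak p}\in S}\mathrm{Hom}(U_{\mathfrak p}^{n e_{\mathfrak p}}/U_{\mathfrak p}^{m e_{\mathfrak p}},\Z/p\Z)$. Now a character $\chi$ of the ray class group pulls back at ${\mathfrak p}$ to a character $\psi_{\mathfrak p}$ of the local contribution $U_{\mathfrak p}^1/U_{\mathfrak p}^{m e_{\mathfrak p}}$ at level ${\mathfrak m}(S)^m$; being $\Z/p\Z$-valued, $\psi_{\mathfrak p}$ annihilates every $p$-th power, hence factors through $U_{\mathfrak p}^1/(U_{\mathfrak p}^1)^p U_{\mathfrak p}^{m e_{\mathfrak p}}$. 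Its restriction to $U_{\mathfrak p}^{n e_{\mathfrak p}}$, which is the ${\mathfrak p}$-component of $q^*\iota^*\chi$, therefore factors through the image of $U_{\mathfrak p}^{n e_{\mathfrak p}}$ there, namely $(U_{\mathfrak p}^1)^pU_{\mathfrak p}^{n e_{\mathfrak p}}/(U_{\mathfrak p}^1)^pU_{\mathfrak p}^{m e_{\mathfrak p}}$ (using $U_{\mathfrak p}^{m e_{\mathfrak p}}\subseteq U_{\mathfrak p}^{n e_{\mathfrak p}}$ and the modular law). Thus the image of $q^*\iota^*$ lands in $\bigoplus_{{\mathfrak p}\in S}\mathrm{Hom}\big((U_{\mathfrak p}^1)^pU_{\mathfrak p}^{n e_{\mathfrak p}}/(U_{\mathfrak p}^1)^pU_{\mathfrak p}^{m e_{\mathfrak p}},\Z/p\Z\big)$, of total dimension $\sm_{{\mathfrak p}\in S}{\rm rk}_p\big((U_{\mathfrak p}^1)^pU_{\mathfrak p}^{n e_{\mathfrak p}}/(U_{\mathfrak p}^1)^pU_{\mathfrak p}^{m e_{\mathfrak p}}\big)$, which is the right inequality.

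The main obstacle is exactly this last refinement. Bounding $\mathcal K$ crudely gives only $\sm_{{\mathfrak p}\in S}{\rm rk}_p(U_{\mathfrak p}^{n e_{\mathfrak p}}/U_{\mathfrak p}^{m e_{\mathfrak p}})$, which is weaker because it sees solely the $p$-th powers coming from $U_{\mathfrak p}^{n e_{\mathfrak p}}$. To reach the stated estimate one must genuinely use that the characters in play extend to the \emph{entire} local unit group $U_{\mathfrak p}^1$ at level ${\mathfrak m}(S)^m$ and are $\Z/p\Z$-valued, so that they kill $(U_{\mathfrak p}^1)^p$ rather than merely $(U_{\mathfrak p}^{n e_{\mathfrak p}})^p$; carrying this through the reciprocity map (and, for $p=2$, keeping the real infinite places totally split as the ordinary sense requires) is where the care lies.
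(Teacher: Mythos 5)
Your proposal is correct, but there is nothing inside the paper to compare it against: Theorem \ref{thmfond} is stated as a quotation of [Gr2003, Theorem I.4.5 and Corollary I.4.5.4], and the paper reproduces no proof of it (the proof printed just below belongs to Corollary \ref{corofond}, the rank-stabilization statement). Judged on its own, your argument is complete and is in substance the intended class-field-theoretic one. The left inequality from the surjection $\rho$ is immediate (here one does not even need approximation: the two moduli have the same support, so the ideal groups coincide and $\rho$ is the literal quotient map). The identification of $\ker\rho$ as a quotient of $W=\bigoplus_{{\mathfrak p}\in S}U_{\mathfrak p}^{n e_{\mathfrak p}}/U_{\mathfrak p}^{m e_{\mathfrak p}}$ via the sequence $E_K \to (\mathcal{O}_K/{\mathfrak m})^\times \to \Cl_K({\mathfrak m}) \to \Cl_K \to 1$ is exactly right in the ordinary sense (full unit group $E_K$, ordinary $\Cl_K$, no positivity conditions, which is why $p=2$ introduces no extra terms). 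Your dualization step correctly converts the rank difference into $\dim_{\F_p}$ of the image of the restriction map $\iota^*$, and injectivity of $q^*$ legitimately transfers the computation into $\mathrm{Hom}(W,\Z/p\Z)$. Most importantly, you isolated and correctly executed the one idea that produces the refined bound: a $\Z/p\Z$-valued character of $\Cl_K({\mathfrak m}(S)^m)$ restricts at ${\mathfrak p}$ to a character defined on all of $U_{\mathfrak p}^1/U_{\mathfrak p}^{m e_{\mathfrak p}}$, hence kills the image of the full group $(U_{\mathfrak p}^1)^p$ rather than only $(U_{\mathfrak p}^{n e_{\mathfrak p}})^p$; by the second isomorphism theorem its restriction to $U_{\mathfrak p}^{n e_{\mathfrak p}}$ then factors through $(U_{\mathfrak p}^1)^p U_{\mathfrak p}^{n e_{\mathfrak p}}/(U_{\mathfrak p}^1)^p U_{\mathfrak p}^{m e_{\mathfrak p}}$, which is precisely the local quotient appearing in the statement. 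Two cosmetic remarks only: your convention $U_{\mathfrak p}^0=\mathcal{O}_{K_{\mathfrak p}}^\times$ is indeed needed to make the case $n=0$ meaningful (the theorem defines $U_{\mathfrak p}^j$ only for $j\geq 1$), and the groups $(U_{\mathfrak p}^1)^p U_{\mathfrak p}^{n e_{\mathfrak p}}/(U_{\mathfrak p}^1)^p U_{\mathfrak p}^{m e_{\mathfrak p}}$ are automatically elementary abelian (any $u\in U_{\mathfrak p}^{n e_{\mathfrak p}}$ has $u^p\in (U_{\mathfrak p}^1)^p$), so their $p$-rank is just their $\F_p$-dimension, consistent with your character count.
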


\begin{corollary} \label{corofond} \cite[Theorem 2.1 \& Corollary 2.2]{Gr8}\,[Gr2017c]
We have:
$${\rm rk}_p(\Cl_K({\mathfrak m}(S)^m)) = 
{\rm rk}_p(\Cl_K({\mathfrak m}(S)^n)) = {\rm rk}_p({\mathcal A}_{K,S}), \ 
\hbox{for all $m \geq n \geq n_0$,} $$

\noindent
where $n_0=3$ for $p=2$ and $n_0=2$ for $p >2$. 
Thus ${\mathcal T}_{K,S}=1$ if and only if 
${\rm rk}_p(\Cl_K({\mathfrak m}(S)^{n_0}))=\wt r_{K,S}^{}$ ($\Z_p$-rank 
of ${\mathcal A}_{K,S}$).
\end{corollary}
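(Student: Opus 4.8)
The plan is to derive the asserted stabilization directly from Theorem \ref{thmfond} by showing that \emph{each} local term on its right-hand side is trivial as soon as $n \geq n_0$; this squeezes the difference of $p$-ranks to zero, and the identification of the stable value with ${\rm rk}_p({\mathcal A}_{K,S})$ then follows from class field theory, while the final equivalence is read off from the decomposition \eqref{AT}. The only real content is a local computation, so I would isolate it first. For a $p$-place ${\mathfrak p}$ with ramification index $e:=e_{\mathfrak p}$, the term $(U_{\mathfrak p}^1)^p\,U_{\mathfrak p}^{ne}/(U_{\mathfrak p}^1)^p\,U_{\mathfrak p}^{me}$ is trivial as soon as $U_{\mathfrak p}^{ne} \subseteq (U_{\mathfrak p}^1)^p$ (since then $U_{\mathfrak p}^{ne}\subseteq (U_{\mathfrak p}^1)^p\,U_{\mathfrak p}^{me}$ for every $m\geq n$), and I would prove this stronger inclusion. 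The engine is the classical behaviour of the $p$-th power map relative to the threshold $e/(p-1)$: for every integer $i>e/(p-1)$ the map $u\mapsto u^p$ carries $U_{\mathfrak p}^i$ \emph{onto} $U_{\mathfrak p}^{i+e}$. One checks this on the graded pieces $U_{\mathfrak p}^i/U_{\mathfrak p}^{i+1}$, where the comparison modulo $p$ is governed by whether $x^p$ or $px$ dominates in $(1+x)^p$; in the range $i>e/(p-1)$ the term $px$ wins, giving an isomorphism onto $U_{\mathfrak p}^{i+e}/U_{\mathfrak p}^{i+e+1}$, and completeness upgrades this to surjectivity. Consequently $(U_{\mathfrak p}^1)^p \supseteq (U_{\mathfrak p}^i)^p = U_{\mathfrak p}^{i+e}$.

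It then remains to insert the numerical thresholds. For $p>2$ one has $e/(p-1)\leq e/2 < e$, so $i=e$ is already an admissible integer and yields $(U_{\mathfrak p}^1)^p \supseteq U_{\mathfrak p}^{2e}$; hence $U_{\mathfrak p}^{ne}\subseteq U_{\mathfrak p}^{2e}\subseteq (U_{\mathfrak p}^1)^p$ for every $n\geq 2$, giving $n_0=2$. For $p=2$ the threshold equals $e$, so the smallest admissible integer is $i=e+1$, whence $(U_{\mathfrak p}^1)^2 \supseteq U_{\mathfrak p}^{2e+1}\supseteq U_{\mathfrak p}^{3e}$ and thus $U_{\mathfrak p}^{ne}\subseteq (U_{\mathfrak p}^1)^2$ for every $n\geq 3$, giving $n_0=3$. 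I expect the case $p=2$ to be the main obstacle precisely here: the threshold coincidence $e/(p-1)=e$ (together with the unavoidable presence of $\{\pm 1\}$ in the units) costs one extra step in the filtration, which is exactly what forces $n_0=3$ rather than $2$.

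With every local term trivial for $n\geq n_0$, Theorem \ref{thmfond} immediately gives ${\rm rk}_p(\Cl_K({\mathfrak m}(S)^m)) = {\rm rk}_p(\Cl_K({\mathfrak m}(S)^n))$ for all $m\geq n\geq n_0$. To reach ${\rm rk}_p({\mathcal A}_{K,S})$, I would use that the moduli ${\mathfrak m}(S)^n$ are cofinal among those supported on $S$, so that ${\mathcal A}_{K,S}={\rm Gal}(H_{K,S}/K)$ is the projective limit of the $p$-parts of $\Cl_K({\mathfrak m}(S)^n)$ along the surjective norm maps; since $p$-rank cannot increase under a surjection, the non-decreasing sequence ${\rm rk}_p(\Cl_K({\mathfrak m}(S)^n))$ reaches its limit ${\rm rk}_p({\mathcal A}_{K,S})$ already at $n=n_0$, which is the displayed chain of equalities. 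Finally, for the equivalence: by \eqref{AT} we have ${\mathcal A}_{K,S}=\Gamma_{K,S}\plus{\mathcal T}_{K,S}$ with $\Gamma_{K,S}\simeq\Z_p^{\wt r_{K,S}^{}}$, so ${\rm rk}_p({\mathcal A}_{K,S})=\wt r_{K,S}^{}+{\rm rk}_p({\mathcal T}_{K,S})$. As ${\mathcal T}_{K,S}$ is a finite $p$-group, ${\mathcal T}_{K,S}=1 \iff {\rm rk}_p({\mathcal T}_{K,S})=0 \iff {\rm rk}_p({\mathcal A}_{K,S})=\wt r_{K,S}^{}$, and combining this with the just-proved equality ${\rm rk}_p(\Cl_K({\mathfrak m}(S)^{n_0}))={\rm rk}_p({\mathcal A}_{K,S})$ gives exactly the stated criterion.
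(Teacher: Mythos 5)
Your proof is correct and follows essentially the same route as the paper: both reduce, via Theorem \ref{thmfond}, to the local inclusion $U_{\mathfrak p}^{n \cdot e_{\mathfrak p}} \subseteq (U_{\mathfrak p}^1)^p$, valid exactly when $n > p/(p-1)$, which yields $n_0=3$ for $p=2$ and $n_0=2$ for $p>2$, and then identify the stabilized rank with ${\rm rk}_p({\mathcal A}_{K,S})$. The only differences are cosmetic: you prove the local threshold lemma inline (via the graded pieces of the unit filtration), where the paper just cites Fesenko--Vostokov, and you make explicit the cofinality/projective-limit argument that the paper compresses into ``as $m\to\infty$''.
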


\begin{proof} It is sufficient to get, for some fixed $n \geq 0$:
$$\hbox{$(U_{\mathfrak p}^1)^p\,U_{\mathfrak p}^{n \cdot e_{\mathfrak p}} 
= (U_{\mathfrak p}^1)^p$, \  for all ${\mathfrak p} \in S$, }$$

\noindent
hence $U_{\mathfrak p}^{n \cdot e_{\mathfrak p}} \subseteq 
(U_{\mathfrak p}^1)^p$ for all ${\mathfrak p} \in S$; 
indeed, we then have:
$$\hbox{${\rm rk}_p(\Cl_K({\mathfrak m}(S)^n)) = {\rm rk}_p(\Cl_K({\mathfrak m}(S)^m)) = 
\wt r_{K,S}^{} + {\rm rk}_p({\mathcal T}_{K,S})$ as $m\to\infty$, } $$

\noindent
giving ${\rm rk}_p(\Cl_K({\mathfrak m}(S)^n)) = \wt r_{K,S}^{}+ {\rm rk}_p({\mathcal T}_{K,S})$ 
for such $n$. 

\smallskip
The condition $U_{\mathfrak p}^{n \cdot e_{\mathfrak p}} \subseteq 
(U_{\mathfrak p}^1)^p$ is fulfilled as soon as
$n \cdot e_{\mathfrak p} > \Frac{p\cdot e_{\mathfrak p}}{p-1}$, whence
$n > \Frac{p}{p-1}$ (Fesenko--Vostokov
\cite[Chapter I, \S\,5.8, Corollary 2]{FV}\,[FV2002])
giving the value of $n_0$; furthermore, $\Cl_K({\mathfrak m}(S)^{n_0})$ gives the 
$p$-rank of ${\mathcal T}_{K,S}$ as soon as the $\Z_p$-rank $\wt r_{K,S}^{}$ is known.
\end{proof}

\subsection{Practical computation of $\wt r_{K,S}^{}$}\label{comprtild}
\smallskip
Let $S \subseteq P$. From \eqref{rank}, we have:
$\wt r_{K,S}^{} =\sm_{{\mathfrak p} \in S} 
[K_{\mathfrak p} : \Q_p] - r_{K,S}^{}$,
where $r_{K,S}^{} := {\rm dim}_{\Q_p} \big( \Q_p {\rm log}_{S}(E_K) \big)$.

\quad (i) In \cite{M01,M02}\,[Mai2002-2003] Maire has given, in the relative Galois case, 
some results about $r_{K,S}^{}$ depending on Schanuel's conjecture 
and the use of the representation $\Q_p {\rm log}_{S}(E_K)$ from the 
results of Jaulent \cite{Ja00}\,[Jau1985].

\smallskip
\quad (ii) In the Galois case, this rank has been studied by Nelson \cite{Nel}\,[Nel2013]
giving formulas (or  lower bounds) under the $p$-adic Schanuel 
conjecture.

\smallskip
\quad (iii) We have proposed, in \cite[III,\,\S\,4\,(f)]{Gr3}\,[Gr2003], a conjecture
and a calculation process in the general non-Galois case using a 
Galois descent from the
Galois closure $N$ of $K$ and the family of decomposition groups
of the places of $N$ above $p$ and $\infty$. If $K/\Q$ is Galois then
(with $\Sigma := P \setminus S$):

\smallskip
\centerline{${\rm rk}_{\Z_p} \big({\rm Gal}(\wt {K\,}^P\!\!/\wt {K\,}^S) \big) =
\sm_{{\mathfrak p} \in \Sigma} [K_{\mathfrak p} : \Q_p] - 
{\rm dim}_{\Q_p}\big(\Q_p {\rm log}_{P}({\mathcal E}_{K,S}) \big)$,}

\noindent
where ${\mathcal E}_{K,S} := \big\{ \varepsilon \in E_K \otimes \Z_p,\ 
\iota_{\mathfrak p}^{}( \varepsilon) = 1,\ \forall {\mathfrak p} \in S \big\}$
and $\iota_{\mathfrak p}^{} : E_K \otimes \Z_p \to U_{\mathfrak p}^1$.

\smallskip
But all these similar approaches
are difficult for programming and not so obvious for random $K$ and $S$
because of conjectural aspects; so we shall preferably give extensive 
computations via PARI/GP \cite{P} since ray class fields are well computed. 
But it remains the problem of justification of the ``computing'' of $\wt r_{K,S}^{}$, 
when no theoretical value is known (see another explicit numerical method in 
\cite[\S\,III.5, Theorem 5.2]{Gr3}\,[Gr2003]).

\smallskip
We conclude by the following comments:

\begin{remark} If ${\mathcal T}_{K,P}=1$ (i.e., the field $K$ is called 
$p$-rational as proposed by Movahhedi in \cite{Mo1,Mo2}\,[Mov1988-1990]),
this does not imply ${\mathcal T}_{K,S}=1$ for $S \subsetneq P$
(the numerical examples will show many cases). 
In the opposite situation, we may have ${\mathcal T}_{K,P} \ne1$,
but often ${\mathcal T}_{K,S}=1$ for $S \subsetneq P$.

\smallskip
This intricate aspects have been studied by Maire \cite[Section 3]{M1}\,[Mai2005]
in which he introduces the ``$S$-cohomologcal condition''
${\rm H}^2({\mathcal G}_{K,S}, \Q_p/\Z_p)=0$ (knowing that 
${\mathcal G}_{K,S}$ is a free pro$p$-group if and only if 
${\rm H}^2({\mathcal G}_{K,S}, \Q_p/\Z_p)$ and ${\mathcal T}_{K,S}$
are trivial) and that of ``$S$-arithmetical condition'' 
($E_K \otimes \Z_p \to U_{K,S}$ injective), and compare them,
which of course coincide for $S=P$; we know that the 
$S$-arithmetical condition implies the $S$-cohomologcal one.

\smallskip
We shall speak of $S$-rationality, when ${\mathcal T}_{K,S}=1$ 
for $S \subseteq P$, even if this may be rather ambiguous when
$S \subsetneq P$ because of the above observations; one must 
understand this as a ``free $S$-ramification'' over $K$ (i.e., giving 
a free abelian $S$-ramified pro-$p$-extension $H_{K,S}/K$).
This is also justified by the fact that many variants of the definition have been 
given, as those of Jaulent--Sauzet \cite{JS1,JS2}\,[JS1997-2000], 
Bourbon--Jaulent \cite{BJ}\,[BJ2013], 
where are defined and studied the case of singleton $S=\{{\mathfrak p}\}$ or that 
of the ``$2$-birationality'' of quadratic extensions of totally real fields
when $S=\{{\mathfrak p}, {\mathfrak p}'\}$.
\end{remark}

\section{Algorithmic approach of $S$-ramification}

The principle is to consider a modulus ${\mathfrak m}_S := 
\prod_{{\mathfrak p} \in S}{\mathfrak p}^{\lambda_{\mathfrak p}}$,
$S \subseteq P$, with $\lambda_{\mathfrak p} \gg 0$ for all 
${\mathfrak p} \in S$ to ``read'' the structure of ${\mathcal A}_{K,S}$
on the ray class group $\Cl_K({\mathfrak m}_S)$. 
The practice shows that the more convenient modulus is of the form:

\smallskip
\centerline{$\Big(\prd_{{\mathfrak p} \in S}{\mathfrak p}^{e_{\mathfrak p}}\Big)^n$,}

\noindent
where $e_{\mathfrak p}$ is the ramification index of ${\mathfrak p}$ in $K/\Q$
and $n \gg 0$. Of course, this modulus is $(p^n)$ only for $S=P$;
so we must use the ideal decomposition of $p$ in $K$, given by PARI/GP,
and compute everywhere with ideals.

\subsection{Main program computing ${\mathcal T}_{K,S}$ and $\wt r_{K,S}^{}$}
\smallskip
\subsubsection{The PARI/GP program} \label{PP}
\smallskip
\footnotesize
\begin{verbatim}
==========================================================================================
{P=x^3+197*x^2+718*x+508;if(polisirreducible(P)==0,break);print(P);bp=2;Bp=5000;n0=6;
K=bnfinit(P,1);forprime(p=bp,Bp,n=n0+floor(30/p);print();print("p=",p);F=idealfactor(K,p);
d=component(matsize(F),1);F1=component(F,1);for(j=1,d,print(component(F1,j)));
for(z=2^d,2^(d+1)-1,bin=binary(z);mod=List;for(j=1,d,listput(mod,component(bin,j+1),j));
M=1;for(j=1,d,ch=component(mod,j);if(ch==1,F1j=component(F1,j);ej=component(F1j,3);
F1j=idealpow(K,F1j,ej);M=idealmul(K,M,F1j)));Idn=idealpow(K,M,n);Kpn=bnrinit(K,Idn);
Hpn=component(component(Kpn,5),2);L=List;e=component(matsize(Hpn),2);R=0;
for(k=1,e,c=component(Hpn,e-k+1);w=valuation(c,p);if(w>0,R=R+1;listinsert(L,p^w,1)));
print("S=",mod," rk(A_S)=",R," A_S=",L)))}
==========================================================================================
\end{verbatim}
\normalsize

\subsubsection{Instructions for use and illustrations}
\smallskip
See the Note at the end of Section \ref{note}.
The reader has only to copy and past the verbatim of the program and to
use a ``terminal session via Sage'', on his or her computer, or a cell in the page $\ $ 
{\url{http://pari.math.u-bordeaux.fr/gp.html}}

The programs in this article can be directly copied and pasted at:

\url{https://www.dropbox.com/s/1srmksbr2ujf40i/Incomplete%20p-ramification.pdf?dl=0}

\smallskip
It is assumed that the irreducible monic polynomial ${\sf P}$ 
defining $K$ is given and that the interval ${\sf [bp, Bp]}$
of tested primes $p$ is also given by the user.

\smallskip
\quad (i) The program computes the decomposition of $p$ into ${\sf d}$ prime ideals; 
for instance, the following data gives, for ${\sf P=x^3 + 197*x^2 + 718*x + 508}$
and $p=2$, the decomposition $(p) = {\mathfrak p} {\mathfrak p}'$ in $\Q(x)$, 
using ${\sf idealfactor(K,p)}$:

\footnotesize
\begin{verbatim}
[2, [-65, 0, 1]~, 1, 1, [0, 0, -1]~] 
[2, [0, 0, 1]~, 1, 2, [0, 1, 0]~]
\end{verbatim}
\normalsize

Recall that for an ideal as ${\sf [2, [0, 0, 1]\ \wt{}\ , 1, 2, [0, 1, 0]\ \wt{}\ ]}$,
the $3$th component is its ramification index, the $4$th component
is its residue degree. For the computation of the modulus
${\mathfrak m}_S$ (to be considered at the power $n$), we replace 
each prime ideal ${\mathfrak p} \in S$ by ${\mathfrak p}^{e_{\mathfrak p}}$
using the function ${\sf idealpow}$.

\smallskip
\quad (ii) For each modulus ${\mathfrak m}_S=\prod_{{\mathfrak p} \in S}
{\mathfrak p}^{e_{\mathfrak p} \cdot n}$, the program gives 
${\rm rk}_p({\mathcal A}_{K,S})$ and the $\Z$-structure of 
${\mathcal A}_{K,S}/{\mathcal A}_{K,S}^{p^N}$, for $N$ of the 
order of $n$, under the form:
$${\mathcal A}_{K,S} = [a_1,\ldots,a_r ; \ b_1, \ldots, b_t], $$

\noindent
where the coefficients $a_1, \ldots,a_r$
increase (resp. the coefficients $b_1, \ldots, b_t$ stabilize) as 
the exponent $n$ increses, so in the 
non-ambiguous cases, $b_1, \ldots, b_t$ give the group-invariants of 
${\mathcal T}_{K,S}$ and $r$ is the $p$-rank $\wt r_{K,S}^{}$ of 
${\rm Gal}(\wt {K\,}^S\!/K)$. 

\smallskip
Of course, if the rank $\wt r_{K,S}^{}$
is not certain, we can not, in a mathematical point of view,
deduce the structure of ${\mathcal T}_{K,S}$; but in practice
the information is correct since one can always verify, with the program, 
the stabilization of the invariants $b_j$ whereas the $a_i$ increase linearly
to infinity.

\smallskip
\quad (iii) The symbolic data $S=[\delta_1, \ldots , \delta_d]$, $\delta_i \in \{0,1\}$,
indicates that the $S$-modulus considered is:
$${\mathfrak m}_S =\Big( \prd_{i=1}^d 
{\mathfrak p}_i^{e_{{\mathfrak p}_i}^{} \!\cdot \,\delta_i^{} } \Big)^n. $$

We have choosen ${\sf n= floor \big(n_0+\frac{30}{p} \big)}$ to get small values
when $p \gg 0$ but larger ones for small $p$ (especially $p=2$ 
giving possibly huge $\order {\mathcal T}_{K,S}$). The parameter
$n_0$ may be increased at will (here $n_0=6$).

\smallskip
There are $2^{\order S}$ distinct sets $S$ parametrized with the binary writing 
of the integers $z \in [0, 2^d-1]$.

\smallskip
For $S=[0, \ldots , 0]$ one obtains the structure of the $p$-class group $\Cl_K$.

\smallskip
\quad (iv) We illustrate the program with an example where $K$ (a totally 
real cubic field) is not $S$-rational for some small $p$ and some 
$S \subseteq P$; but in almost all cases, $K$ is $S$-rational.

\begin{remark}
We do not compute the Galois group associated to the given
polynomial, nor the discriminant or the fundamental units; otherwise, 
the reader has only to add if necessary the instructions:
\footnotesize
\begin{verbatim}
print("Galois :",polgalois(P));
print("Discriminant: ",factor(component (component(K,7), 3)));
print("Fundamental system of units: ",component(component(K,8),5));
\end{verbatim}

\normalsize
\noindent
giving, for the Galois group and the discriminant:

\smallskip
${\sf Galois group=[6, -1, 1,}$ ${\sf "S3"]}$ in the PARI/GP 
notation\,\footnote{See: \url{http://galoisdb.math.upb.de/home}} and 
${\sf Discriminant = [769, 1; 1390573, 1])}$.
\end{remark}

\footnotesize
\begin{verbatim}
P=x^3 + 197*x^2 + 718*x + 508
p=2 
[2, [-65, 0, 1]~, 1, 1, [0, 0, -1]~]
[2, [0, 0, 1]~, 1, 2, [0, 1, 0]~]
S=[0, 0] rk(A_S)=0 A_S=[]
S=[0, 1] rk(A_S)=1 A_S=[4]
S=[1, 0] rk(A_S)=0 A_S=[]
S=[1, 1] rk(A_S)=3 A_S=[274877906944, 4, 2]
p=3 
[3, [3, 0, 0]~, 1, 3, 1]
S=[0] rk(A_S)=0 A_S=[]
S=[1] rk(A_S)=2 A_S=[22876792454961, 3]
p=5 
[5, [-68, 0, 1]~, 1, 1, [-1, 2, -1]~]
[5, [12589, 2, -196]~, 1, 2, [2, 0, 1]~]
S=[0, 0] rk(A_S)=0 A_S=[]
S=[0, 1] rk(A_S)=1 A_S=[390625]
S=[1, 0] rk(A_S)=0 A_S=[]
S=[1, 1] rk(A_S)=2 A_S=[19073486328125, 390625]
p=7 
[7, [-65, 0, 1]~, 1, 1, [3, 2, 1]~]
[7, [12519, 2, -195]~, 1, 2, [-2, 0, 1]~]
S=[0, 0] rk(A_S)=0 A_S=[]
S=[0, 1] rk(A_S)=1 A_S=[7]
S=[1, 0] rk(A_S)=0 A_S=[]
S=[1, 1] rk(A_S)=2 A_S=[33232930569601, 7]
p=11 
[11, [11, 0, 0]~, 1, 3, 1]
S=[0] rk(A_S)=0 A_S=[]
S=[1] rk(A_S)=2 A_S=[3138428376721, 11]
p=13 
[13, [13, 0, 0]~, 1, 3, 1]
S=[0] rk(A_S)=0 A_S=[]
S=[1] rk(A_S)=1 A_S=[1792160394037]
(...)
p=127 
[127, [-66, 0, 1]~, 1, 1, [-16, 2, 2]~]
[127, [16240, 2, -252]~, 1, 2, [61, 0, 1]~]
S=[0, 0] rk(A_S)=0 A_S=[]
S=[0, 1] rk(A_S)=1 A_S=[127]
S=[1, 0] rk(A_S)=0 A_S=[]
S=[1, 1] rk(A_S)=2 A_S=[532875860165503, 127]
p=1571 
[1571, [275, 0, 1]~, 1, 1, [-418, 2, -339]~]
[1571, [21576, 2, -339]~, 1, 2, [275, 0, 1]~]
S=[0, 0] rk(A_S)=0 A_S=[]
S=[0, 1] rk(A_S)=1 A_S=[1571]
S=[1, 0] rk(A_S)=0 A_S=[]
S=[1, 1] rk(A_S)=2 A_S=[23617465807865561078891, 1571]
p=1759 
[1759, [1759, 0, 0]~, 1, 3, 1]
S=[0, 0] rk(A_S)=0 A_S=[]
S=[1] rk(A_S)=2 A_S=[52102777604679963122719, 1759]
p=3371 
[3371, [-295, 0, 1]~, 1, 1, [-1597, 2, 231]~]
[3371, [-121, 0, 1]~, 1, 1, [355, 2, 57]~]
[3371, [415, 0, 1]~, 1, 1, [38, 2, -479]~]
S=[0, 0, 0] rk(A_S)=0 A_S=[]
S=[0, 0, 1] rk(A_S)=0 A_S=[]
S=[0, 1, 0] rk(A_S)=0 A_S=[]
S=[0, 1, 1] rk(A_S)=1 A_S=[3371]
S=[1, 0, 0] rk(A_S)=0 A_S=[]
S=[1, 0, 1] rk(A_S)=1 A_S=[3371]
S=[1, 1, 0] rk(A_S)=1 A_S=[3371]
S=[1, 1, 1] rk(A_S)=2 A_S=[4946650964538063853923491, 3371]
\end{verbatim}
\normalsize

\smallskip
If, for the remarquable case $p=5$, one has some doubt, one increases $n$,
which gives (for $n=50$):

\smallskip
\footnotesize
\begin{verbatim} 
[5, [-68, 0, 1]~, 1, 1, [-1, 2, -1]~]
[5, [12589, 2, -196]~, 1, 2, [2, 0, 1]~]
S=[0, 0] rk(A_S)=0 A_S=[]
S=[0, 1] rk(A_S)=1 A_S=[390625]
S=[1, 0] rk(A_S)=0 A_S=[]
S=[1, 1] rk(A_S)=2 A_S=[17763568394002504646778106689453125, 390625]
\end{verbatim}
\normalsize

\smallskip
Whence ${\mathcal T}_{K,S} \simeq \Z/ 5^8\Z$ for $S_1=\{{\mathfrak p}\}$
(for the prime of residue degree $2$) and $S_2=P$. Note that once the substantial
computation of ${\sf K=bnfinit(P,1)}$ (giving all the basic information about the field)
is done, very large values of $n$ do not increase much the execution time; so any
skeptical user can make $n \to \infty$ to see that only the data $390625$ remains
constant.

\smallskip
(v) In \cite[\S\,9.1]{Gr10}\,[Gr2019a] we have used some special families of polynomials
(e.g., Lecacheux--Washington ones) in which we can force the $p$-adic 
regulator to be $p$-adicaly close to $0$ at will; but we must take the parameter $n$
in proportion, even if here the $\Z_p$-ranks of the ${\mathcal A}_{K,S}$ are obvious,
since $K$ is totally real, giving finite groups except for $S=P$ where 
${\rm rk}_{\Z_p}({\mathcal A}_{K,P})=1$:

\smallskip
\footnotesize
\begin{verbatim}
P=x^3-134480895*x^2-263169*x-1
p=2
[2, [0, 0, 1]~, 1, 1, [1, 0, 1]~]
[2, [0, 1, 0]~, 1, 1, [1, 1, 0]~]
[2, [2, 1, 1]~, 1, 1, [1, 1, 1]~]
S=[0, 0, 0] rk(A_S)=6 A_S=[16, 16, 2, 2, 2, 2]
S=[0, 0, 1] rk(A_S)=6 A_S=[512, 16, 8, 2, 2, 2]
S=[0, 1, 0] rk(A_S)=6 A_S=[512, 16, 8, 2, 2, 2]
S=[0, 1, 1] rk(A_S)=6 A_S=[1024, 512, 8, 8, 2, 2]
S=[1, 0, 0] rk(A_S)=6 A_S=[512, 16, 8, 2, 2, 2]
S=[1, 0, 1] rk(A_S)=6 A_S=[1024, 512, 8, 8, 2, 2]
S=[1, 1, 0] rk(A_S)=6 A_S=[1024, 512, 8, 8, 2, 2]
S=[1, 1, 1] rk(A_S)=7 A_S=[9444732965739290427392, 1024, 1024, 8, 8, 2, 2]

x^3-7625984944841*x^2-387459856*x-1
p=3
[3, [1, -1, -1]~, 1, 1, [0, 1, 1]~]
[3, [2, 1, 0]~, 1, 1, [1, 1, 0]~]
[3, [2541994975055, -19683, 1]~, 1, 1, [-1, 0, -1]~]
S=[0, 0, 0] rk(A_S)=4 A_S=[27, 9, 3, 3]
S=[0, 0, 1] rk(A_S)=4 A_S=[177147, 9, 3, 3]
S=[0, 1, 0] rk(A_S)=4 A_S=[177147, 9, 3, 3]
S=[0, 1, 1] rk(A_S)=4 A_S=[177147, 59049, 3, 3]
S=[1, 0, 0] rk(A_S)=4 A_S=[177147, 9, 3, 3]
S=[1, 0, 1] rk(A_S)=4 A_S=[177147, 59049, 3, 3]
S=[1, 1, 0] rk(A_S)=4 A_S=[177147, 59049, 3, 3]
S=[1, 1, 1] rk(A_S)=5 A_S=[834385168331080533771857328695283, 177147, 59049, 3, 3]
P=x^3-1628427439432947*x^2-13841522500*x-1
p=7
[7, [1, -3, -3]~, 1, 1, [0, 1, 1]~]
[7, [4, 3, 0]~, 1, 1, [1, 1, 0]~]
[7, [542809146438439, -117649, 1]~, 1, 1, [2, 0, 2]~]
S=[0, 0, 0] rk(A_S)=2 A_S=[7, 7]
S=[0, 0, 1] rk(A_S)=2 A_S=[117649, 7]
S=[0, 1, 0] rk(A_S)=2 A_S=[117649, 7]
S=[0, 1, 1] rk(A_S)=3 A_S=[117649, 16807, 7]
S=[1, 0, 0] rk(A_S)=2 A_S=[117649, 7]
S=[1, 0, 1] rk(A_S)=3 A_S=[117649, 16807, 7]
S=[1, 1, 0] rk(A_S)=3 A_S=[117649, 16807, 7]
S=[1, 1, 1] rk(A_S)=4 A_S=[3219905755813179726837607, 117649, 16807, 7]
\end{verbatim}
\normalsize

\subsubsection{Example with $p$ totally split in degree $5$}
\smallskip
For $P=x^5-5$, $n_0=8$, and $p=31$ (totally split) one finds one
case of non $S$--rationality: 

${\sf S=[1, 0, 0, 0, 1]\ \ rk(A_S)=1 \  \ A_S=[961]}$, i.e., 
$\wt r_{K,S}^{}=0$, ${\mathcal T}_{K,S} \simeq \Z/ 31^2\Z$:

\footnotesize
\begin{verbatim}
[31, [-14, 1, 0, 0, 0]~, 1, 1, [7, -15, 10, 14, 1]~]
[31, [-7, 1, 0, 0, 0]~, 1, 1, [14, 2, -13, 7, 1]~]
[31, [3, 1, 0, 0, 0]~, 1, 1, [-12, 4, 9, -3, 1]~]
[31, [6, 1, 0, 0, 0]~, 1, 1, [-6, 1, 5, -6, 1]~]
[31, [12, 1, 0, 0, 0]~, 1, 1, [-3, 8, -11, -12, 1]~]
S=[0, 0, 0, 0, 0] rk(A_S)=0 A_S=[]
S=[0, 0, 0, 0, 1] rk(A_S)=0 A_S=[]
S=[0, 0, 0, 1, 0] rk(A_S)=0 A_S=[]
S=[0, 0, 0, 1, 1] rk(A_S)=0 A_S=[]
S=[0, 0, 1, 0, 0] rk(A_S)=0 A_S=[]
S=[0, 0, 1, 0, 1] rk(A_S)=0 A_S=[]
S=[0, 0, 1, 1, 0] rk(A_S)=0 A_S=[]
S=[0, 0, 1, 1, 1] rk(A_S)=1 A_S=[27512614111]
S=[0, 1, 0, 0, 0] rk(A_S)=0 A_S=[]
S=[0, 1, 0, 0, 1] rk(A_S)=0 A_S=[]
S=[0, 1, 0, 1, 0] rk(A_S)=0 A_S=[]
S=[0, 1, 0, 1, 1] rk(A_S)=1 A_S=[27512614111]
S=[0, 1, 1, 0, 0] rk(A_S)=0 A_S=[]
S=[0, 1, 1, 0, 1] rk(A_S)=1 A_S=[27512614111]
S=[0, 1, 1, 1, 0] rk(A_S)=1 A_S=[27512614111]
S=[0, 1, 1, 1, 1] rk(A_S)=2 A_S=[27512614111, 27512614111]
S=[1, 0, 0, 0, 0] rk(A_S)=0 A_S=[]
S=[1, 0, 0, 0, 1] rk(A_S)=1 A_S=[961]
S=[1, 0, 0, 1, 0] rk(A_S)=0 A_S=[]
S=[1, 0, 0, 1, 1] rk(A_S)=1 A_S=[27512614111]
S=[1, 0, 1, 0, 0] rk(A_S)=0 A_S=[]
S=[1, 0, 1, 0, 1] rk(A_S)=1 A_S=[27512614111]
S=[1, 0, 1, 1, 0] rk(A_S)=1 A_S=[27512614111]
S=[1, 0, 1, 1, 1] rk(A_S)=2 A_S=[27512614111, 27512614111]
S=[1, 1, 0, 0, 0] rk(A_S)=0 A_S=[]
S=[1, 1, 0, 0, 1] rk(A_S)=1 A_S=[27512614111]
S=[1, 1, 0, 1, 0] rk(A_S)=1 A_S=[27512614111]
S=[1, 1, 0, 1, 1] rk(A_S)=2 A_S=[27512614111, 27512614111]
S=[1, 1, 1, 0, 0] rk(A_S)=1 A_S=[27512614111]
S=[1, 1, 1, 0, 1] rk(A_S)=2 A_S=[27512614111, 27512614111]
S=[1, 1, 1, 1, 0] rk(A_S)=2 A_S=[27512614111, 27512614111]
S=[1, 1, 1, 1, 1] rk(A_S)=3 A_S=[27512614111, 27512614111, 27512614111]
\end{verbatim}
\normalsize

\subsubsection{Example with $p$ totally split in degree $7$}
\smallskip
For the polynomial $P=x^7-7$ and $p=43$, one finds two cases:

\footnotesize
\begin{verbatim}
[43, [-18, 1, 0, 0, 0, 0, 0]~, 1, 1, [-2, 19, 13, -16, -20, 18, 1]~]
[43, [-7, 1, 0, 0, 0, 0, 0]~, 1, 1, [1, -6, -7, -1, 6, 7, 1]~]
[43, [9, 1, 0, 0, 0, 0, 0]~, 1, 1, [4, -10, -18, 2, -5, -9, 1]~]
[43, [13, 1, 0, 0, 0, 0, 0]~, 1, 1, [16, 12, 9, -4, -3, -13, 1]~]
[43, [14, 1, 0, 0, 0, 0, 0]~, 1, 1, [21, 20, 17, 8, -19, -14, 1]~]
[43, [15, 1, 0, 0, 0, 0, 0]~, 1, 1, [11, 5, 14, -21, 10, -15, 1]~]
[43, [17, 1, 0, 0, 0, 0, 0]~, 1, 1, [-8, 3, 15, -11, -12, -17, 1]~]
(...)
S=[0, 1, 0, 1, 0, 0, 1] rk(A_S)=1 A_S=[43]
S=[1, 1, 0, 0, 1, 0, 0] rk(A_S)=1 A_S=[43]
\end{verbatim}
\normalsize

\noindent
i.e., $\wt r_{K,S}^{}=0$ and ${\mathcal T}_{K,S} \simeq \Z/ 43\Z$ for 
the two above cases.
For the other modulus, ${\mathcal T}_{K,S}=1$.

\subsubsection{Example with a field discovered by Jaulent--Sauzet}
\smallskip
In \cite{JS1}\,[JS1997], some numerical examples of 
$\{{\mathfrak l}\}(=\{{\mathfrak p}\})$-rational fields, which are not $p$-rational,
are given; of course this corresponds to a suitable choice of $S=\{{\mathfrak p}\}$ 
and we give the case of the field defined by the polynomial:
$$P=x^{10}+19x^8+8x^7+130x^6+16x^5+166x^4-888x^3-15x^2+432x+243$$ 
for $p=3$:

\footnotesize
\begin{verbatim}
[3, [-1, 1, 0, 0, 1, 1, -1, 0, 0, -1]~, 2, 1, [2, 0, 2, 1, 2, 0, 1, 1, 2, 1]~]
[3, [-1, 1, 0, 1, 1, 0, -1, 0, 0, -1]~, 2, 1, [2, 0, 1, 2, 1, 2, 1, 1, 2, 1]~]
[3, [-5, 14, -4, -2, 5, 5, 13, -13, 2, 6]~, 2, 3, [0, 1, 1, 1, -1, -1, -1, -1, -1, 1]~]

S=[0, 0, 0] rk(A_S)=0 A_S=[]
S=[0, 0, 1] rk(A_S)=2 A_S=[14348907,14348907]
S=[0, 1, 0] rk(A_S)=0 A_S=[]
S=[0, 1, 1] rk(A_S)=5 A_S=[14348907,14348907,14348907,14348907, 3]
S=[1, 0, 0] rk(A_S)=0 A_S=[]
S=[1, 0, 1] rk(A_S)=5 A_S=[14348907,14348907,14348907,14348907, 3]
S=[1, 1, 0] rk(A_S)=1 A_S=[27]
S=[1, 1, 1] rk(A_S)=8 A_S=[14348907,14348907,14348907,14348907,14348907,14348907, 3, 3]
\end{verbatim}
\normalsize

\noindent
which is indeed $\{{\mathfrak p}\}$-rational for each prime ideal ${\mathfrak p}$, but 
the field is not $3$-rational since ${\mathcal T}_{K,P}\simeq \Z/3\Z \times  \Z/3\Z$.

\smallskip
Note the case ${\mathcal A}_{K,S} = {\mathcal T}_{K,S} \simeq \Z/27 \Z$.

\smallskip
Many other numerical examples are available in \cite[\S\,3.c]{JS1}\,[JS1997].

\subsubsection{Abelian fields with ${\mathcal T}_{K,S}=1$ but ${\mathcal T}_{K,P}\ne 1$}
\smallskip
We consider for this the cyclotomic field $\Q(\mu_{24})$. The following program 
may be used for any abelian field given by ${\sf polcyclo(N)}$
or ${\sf polsubcyclo(N,d)}$ giving the suitable polynomials of degree $d$
dividing $\varphi(N)$:

\smallskip
\footnotesize
\begin{verbatim}
{P=polcyclo(24);bp=2;Bp=500;n0=8;K=bnfinit(P,1);forprime(p=bp,Bp,n=n0+floor(30/p);print();
print("p=",p);F=idealfactor(K,p);d=component(matsize(F),1);F1=component(F,1);for(j=1,d,
print(component(F1,j)));for(z=2^d,2*2^d-1,bin=binary(z);mod=List;for(j=1,d,
listput(mod,component(bin,j+1),j));M=1;for(j=1,d,ch=component(mod,j);if(ch==1,F1j=component(F1,j);
ej=component(F1j,3);FF1j=idealpow(K,F1j,ej);M=idealmul(K,M, FF1j)));Idn=idealpow(K,M,n);
Kpn=bnrinit(K,Idn);Hpn=component(component(Kpn,5),2);L=List;e=component(matsize(Hpn),2);R=0;
for(k=1,e,c=component(Hpn,e-k+1);w=valuation(c,p);if(w>0,R=R+1;listinsert(L,p^w,1)));
print("S=",mod," rk(A_S)=",R," A_S=",L)))}

p=3
[3, [-1, 0, -1, 0, 1, 0, 0, 0]~, 2, 2, [-1, -1, 1, 1, 1, 1, 0, 0]~]
[3, [-1, 0, 1, 0, 1, 0, 0, 0]~, 2, 2, [-1, -1, -1, -1, 1, 1, 0, 0]~]
S=[0, 0] rk(A_S)=0 A_S=[]
S=[0, 1] rk(A_S)=1 A_S=[22876792454961]
S=[1, 0] rk(A_S)=1 A_S=[22876792454961]
S=[1, 1] rk(A_S)=6 A_S=[68630377364883,22876792454961,22876792454961,22876792454961,22876792454961, 3]

p=7
[7, [-3, 0, -1, 0, 1, 0, 0, 0]~, 1, 2, [2, -3, -3, 1, -3, 1, 0, 0]~]
[7, [-3, 0, 1, 0, 1, 0, 0, 0]~, 1, 2, [2, -3, 3, -1, -3, 1, 0, 0]~]
[7, [2, 0, -2, 0, 1, 0, 0, 0]~, 1, 2, [-3, 2, -3, 2, 2, 1, 0, 0]~]
[7, [2, 0, 2, 0, 1, 0, 0, 0]~, 1, 2, [-3, 2, 3, -2, 2, 1, 0, 0]~]
S=[0, 0, 0, 0] rk(A_S)=0 A_S=[]
S=[0, 0, 0, 1] rk(A_S)=0 A_S=[]
S=[0, 0, 1, 0] rk(A_S)=0 A_S=[]
S=[0, 0, 1, 1] rk(A_S)=2 A_S=[4747561509943, 7]
S=[0, 1, 0, 0] rk(A_S)=0 A_S=[]
S=[0, 1, 0, 1] rk(A_S)=2 A_S=[4747561509943,4747561509943]
S=[0, 1, 1, 0] rk(A_S)=2 A_S=[4747561509943, 7]
S=[0, 1, 1, 1] rk(A_S)=4 A_S=[4747561509943,4747561509943,4747561509943, 7]
S=[1, 0, 0, 0] rk(A_S)=0 A_S=[]
S=[1, 0, 0, 1] rk(A_S)=2 A_S=[4747561509943, 7]
S=[1, 0, 1, 0] rk(A_S)=2 A_S=[4747561509943,4747561509943]
S=[1, 0, 1, 1] rk(A_S)=4 A_S=[4747561509943,4747561509943,4747561509943, 7]
S=[1, 1, 0, 0] rk(A_S)=2 A_S=[4747561509943, 7]
S=[1, 1, 0, 1] rk(A_S)=4 A_S=[4747561509943,4747561509943,4747561509943, 7]
S=[1, 1, 1, 0] rk(A_S)=4 A_S=[4747561509943,4747561509943,4747561509943, 7]
S=[1, 1, 1, 1] rk(A_S)=6 A_S=[4747561509943,4747561509943,4747561509943,4747561509943,4747561509943, 7]

p=13
[13, [-6, 0, 0, 0, 1, 0, 0, 0]~, 1, 2, [2, 6, 0, 0, -4, 1, 0, 0]~]
[13, [-2, 0, 0, 0, 1, 0, 0, 0]~, 1, 2, [6, 2, 0, 0, 3, 1, 0, 0]~]
[13, [2, 0, 0, 0, 1, 0, 0, 0]~, 1, 2, [-6, -2, 0, 0, 3, 1, 0, 0]~]
[13, [6, 0, 0, 0, 1, 0, 0, 0]~, 1, 2, [-2, -6, 0, 0, -4, 1, 0, 0]~]
S=[0, 0, 0, 0] rk(A_S)=0 A_S=[]
S=[0, 0, 0, 1] rk(A_S)=0 A_S=[]
S=[0, 0, 1, 0] rk(A_S)=0 A_S=[]
S=[0, 0, 1, 1] rk(A_S)=2 A_S=[1792160394037,13]
S=[0, 1, 0, 0] rk(A_S)=0 A_S=[]
S=[0, 1, 0, 1] rk(A_S)=2 A_S=[1792160394037,1792160394037]
S=[0, 1, 1, 0] rk(A_S)=2 A_S=[1792160394037,13]
S=[0, 1, 1, 1] rk(A_S)=4 A_S=[1792160394037,1792160394037,1792160394037,13]
S=[1, 0, 0, 0] rk(A_S)=0 A_S=[]
S=[1, 0, 0, 1] rk(A_S)=2 A_S=[1792160394037,13]
S=[1, 0, 1, 0] rk(A_S)=2 A_S=[1792160394037,1792160394037]
S=[1, 0, 1, 1] rk(A_S)=4 A_S=[1792160394037,1792160394037,1792160394037,13]
S=[1, 1, 0, 0] rk(A_S)=2 A_S=[1792160394037,13]
S=[1, 1, 0, 1] rk(A_S)=4 A_S=[1792160394037,1792160394037,1792160394037,13]
S=[1, 1, 1, 0] rk(A_S)=4 A_S=[1792160394037,1792160394037,1792160394037,13]
S=[1, 1, 1, 1] rk(A_S)=6 A_S=[1792160394037,1792160394037,1792160394037,1792160394037,1792160394037,13]
\end{verbatim}
\normalsize

\subsection{Experiments with the fields $K=\Q(\sqrt[p]{N})$, $N$ prime}
\smallskip
These fields are studied in great detail by Lecouturier in \cite[\S\,5]{Le}\,[Lec2018] for
their $p$-class groups and these fields have some remarkable
properties. For instance if ${\rm log}$ is the discrete logarithm for
$(\Z/p\Z)^\times$ provided with a primitive root $g$, the expression 
$T=\sm_{k=1}^{(N-1)/2} k\cdot {\rm log}(k) \pmod p$ governs,
under some conditions, the $p$-rank of $\Cl_K$ (from a result of 
Calegari--Emerton, after other similar results of Iimura, proved 
again in \cite[Theorem 1.1]{Le}\,[Lec2018]).

\smallskip
So we shall give the general caculations, for all $S \subseteq P$, 
with that of $T$. We assume $N$ prime congruent to $1$ modulo $p$,
but the reader may suppress this condition. It seems that many interesting
heuristics can be elaborated from the numerical results; we only give
some examples (recal that the structure of the class group is given
by the first data $S = \es$):

\footnotesize
\begin{verbatim}
{p=3;print("p=",p);n=8+floor(30/p);g=znprimroot(p);forprime(N=1,10^3,
if(Mod(N,p)!=1,next);P=x^p-N;print();print("P=",P);T=Mod(0,p);
for(k=1,(N-1)/2,if(Mod(k,p)==0,next);T=T+k*znlog(k,g));K=bnfinit(P,1);
F=idealfactor(K,p);d=component(matsize(F),1);F1=component(F,1);for(j=1,d,
print(component(F1,j)));for(z=2^d,2*2^d-1,bin=binary(z);mod=List;for(j=1,d,
listput(mod,component(bin,j+1),j));M=1;for(j=1,d,ch=component(mod,j);
if(ch==1,F1j=component(F1,j);ej=component(F1j,3);F1j=idealpow(K,F1j,ej);
M=idealmul(K,M,F1j)));Idn=idealpow(K,M,n);Kpn=bnrinit(K,Idn);
Hpn=component(component(Kpn,5),2);L=List;e=component(matsize(Hpn),2);R=0;
for(k=1,e,c=component(Hpn,e-k+1);w=valuation(c,p);if(w>0,R=R+1;
listinsert(L,p^w,1)));print("S=",mod," rk(A_S)=",R," A_S=",L)))}

p=3
P=x^3 - 7
[3, [-1, 1, 0]~, 3, 1, [1, 1, 1]~]
T=Mod(2,3) S=[0] rk(A_S)=1 A_S=[3]
T=Mod(2,3) S=[1] rk(A_S)=2 A_S=[387420489,387420489]
P=x^3 - 271
[3, [-2, 0, -1]~, 1, 1, [0, 0, 1]~]
[3, [-1, 1, 1]~, 2, 1, [2, 1, 0]~]
T=Mod(0,3) S=[0,0] rk(A_S)=1 A_S=[9]
T=Mod(0,3) S=[0,1] rk(A_S)=3 A_S=[129140163, 27, 3]
T=Mod(0,3) S=[1,0] rk(A_S)=2 A_S=[9, 3]
T=Mod(0,3) S=[1,1] rk(A_S)=4 A_S=[129140163,129140163, 27, 3]
P=x^3 - 523
[3, [0, 0, 1]~, 2, 1, [2, 1, 0]~]
[3, [1, 0, -1]~, 1, 1, [2, 1, 1]~]
T=Mod(0,3) S=[0,0] rk(A_S)=1 A_S=[9]
T=Mod(0,3) S=[0,1] rk(A_S)=2 A_S=[9, 3]
T=Mod(0,3) S=[1,0] rk(A_S)=3 A_S=[387420489, 9, 3]
T=Mod(0,3) S=[1,1] rk(A_S)=4 A_S=[387420489,129140163, 9, 3]

p=5
P=x^5 - 11
[5, [-1, 1, 0, 0, 0]~, 5, 1, [1, 1, 1, 1, 1]~]
T=Mod(4,5) S=[0] rk(A_S)=1 A_S=[5]
T=Mod(4,5) S=[1] rk(A_S)=3 A_S=[30517578125,6103515625,6103515625]
P=x^5 - 211
[5, [-1, 1, 0, 0, 0]~, 5, 1, [1, 1, 1, 1, 1]~]
T=Mod(4,5) S=[0] rk(A_S)=3 A_S=[5, 5, 5]
T=Mod(4,5) S=[1] rk(A_S)=5 A_S=[6103515625,6103515625,6103515625, 5, 5]
P=x^5 - 401
[5, [-1, 1, 0, 1, 0]~, 4, 1, [4, 3, 2, 0, 1]~]
[5, [1, 0, 0, -1, 0]~, 1, 1, [4, 3, 2, 1, 1]~]
T=Mod(0,5) S=[0,0] rk(A_S)=2 A_S=[5, 5]
T=Mod(0,5) S=[0,1] rk(A_S)=2 A_S=[25, 5]
T=Mod(0,5) S=[1,0] rk(A_S)=3 A_S=[6103515625,6103515625, 25]
T=Mod(0,5) S=[1,1] rk(A_S)=4 A_S=[6103515625,6103515625,1220703125, 25]

p=7
P=x^7 - 29
[7, [-1, 1, 0, 0, 0, 0, 0]~, 7, 1, [1, 1, 1, 1, 1, 1, 1]~]
T=Mod(6,7) S=[0] rk(A_S)=1 A_S=[7]
T=Mod(6,7) S=[1] rk(A_S)=4 A_S=[96889010407,13841287201,13841287201,13841287201]
P=x^7 - 197
[7, [0, 0, 0, 0, 0, 0, 1]~, 1, 1, [6, 5, 4, 3, 3, 2, 1]~]
[7, [1, 0, 0, 0, 0, 0, -1]~, 6, 1, [6, 5, 4, 3, 1, 2, 1]~]
T=Mod(0,7) S=[0,0] rk(A_S)=1 A_S=[7]
T=Mod(0,7) S=[0,1] rk(A_S)=4 A_S=[96889010407,13841287201, 1977326743, 49]
T=Mod(0,7) S=[1,0] rk(A_S)=1 A_S=[7]
T=Mod(0,7) S=[1,1] rk(A_S)=5 A_S=[96889010407,13841287201,1977326743,1977326743, 49]
P=x^7 - 337
[7, [-1, 1, 0, 0, 0, 0, 0]~, 7, 1, [1, 1, 1, 1, 1, 1, 1]~]
T=Mod(2,7) S=[0] rk(A_S)=2 A_S=[7, 7]
T=Mod(2,7) S=[1] rk(A_S)=5 A_S=[13841287201,13841287201,13841287201,13841287201, 7]

p=11
P=x^11 - 67
[11, [-1, 1, 0, 0, 0, 0, 0, 0, 0, 0, 0]~, 11, 1, [1, 1, 1, 1, 1, 1, 1, 1, 1, 1, 1]~]
T=Mod(8,11) S=[0] rk(A_S)=2 A_S=[11, 11]
T=Mod(8,11) S=[1] rk(A_S)=7 A_S=[285311670611,285311670611,25937424601,
                                                           25937424601,25937424601,25937424601, 11]
P=x^11 - 727
[11, [-5, 0, 0, 0, 0, 0, 0, 0, 0, 0, -5]~, 1, 1, [10, 9, 8, 7, 6, 5, 4, 6, 3, 2, 1]~]
[11, [-5, 0, 0, 0, 0, 0, 0, 0, 0, 0, 5]~, 10, 1, [10, 9, 8, 7, 6, 5, 4, 4, 3, 2, 1]~]
T=Mod(0,11) S=[0,0] rk(A_S)=1 A_S=[11]
T=Mod(0,11) S=[0,1] rk(A_S)=6 A_S=[25937424601,25937424601,25937424601,25937424601,2357947691, 121]
T=Mod(0,11) S=[1,0] rk(A_S)=1 A_S=[11]
T=Mod(0,11) S=[1,1] rk(A_S)=7 A_S=[25937424601,25937424601,25937424601,
                                                             25937424601,2357947691,2357947691,121]
p=13
P=x^13 - 53
[13, [-1, 1, 0, 0, 0, 0, 0, 0, 0, 0, 0, 0, 0]~, 13, 1, [1, 1, 1, 1, 1, 1, 1, 1, 1, 1, 1, 1, 1]~]
T=Mod(11,13) S=[0] rk(A_S)=1 A_S=[13]
T=Mod(11,13) S=[1] rk(A_S)=7 A_S=[1792160394037,137858491849,137858491849,
                                               137858491849,137858491849,137858491849,137858491849]
P=x^13 - 677
[13, [-4, 0, 0, 0, 0, 0, 0, 0, 0, 0, 0, 0, 4]~, 12, 1, [12, 11, 10, 9, 8, 7, 6, 5, 5, 4, 3, 2, 1]~]
[13, [5, 0, 0, 0, 0, 0, 0, 0, 0, 0, 0, 0, -4]~, 1, 1,[12, 11, 10, 9, 8, 7, 6, 5, 2, 4, 3, 2, 1]~]
T=Mod(0,13) S=[0,0] rk(A_S)=1 A_S=[13]
T=Mod(0,13) S=[0,1] rk(A_S)=1 A_S=[13]
T=Mod(0,13) S=[1,0] rk(A_S)=7 A_S=[137858491849,137858491849,137858491849,
                                                        137858491849,137858491849,10604499373, 169]
T=Mod(0,13) S=[1,1] rk(A_S)=8 A_S=[137858491849,137858491849,137858491849,
                                            137858491849,137858491849,10604499373,10604499373, 169]
\end{verbatim}
\normalsize

\subsection{The fields $K=\Q \big(\sqrt{- \sqrt{-q}}\big)$ associated to elliptic curves}
\smallskip
These fields, used by Coates--Li in \cite{CL,CL2}\,[CL2018-2019] to prove non-vanishing theorems 
for the central values at $s = 1$ of the complex $L$-series of a family of elliptic curves
studied by Gross (for any prime $q \equiv 7 \pmod {8}$ and $p=2$), 
are particularly interesting.

\smallskip
Note once for all that the signature of $K$ is ${\sf [0,2]}$, the Galois closure 
of $K$ is of degree $8$ with Galois group ${\sf  [8, -1, 1, "D(4)"]}$ and
$D_K= 2^m\,q^3$.

\subsubsection{Program for various $p$}
\smallskip
In this part, we fix the prime number $q$ and compute the structure of 
${\mathcal A}_{K,S}$ for all sets $S \subseteq P$.
Recall that the parameter $n$ must be such that $p^n$ be much 
larger than the exponent of ${\mathcal T}_K$. 

\smallskip
For instance, for $P=x^4+23$, we give the 
results for $p=3$ and $p=71$:

\smallskip
\footnotesize
\begin{verbatim}
{q=23;P=x^4+q;print("P=",P);bp=2;Bp=500;n0=8;K=bnfinit(P,1);
forprime(p=bp,Bp,n=n0+floor(30/p);print();print("p=",p);F=idealfactor(K,p);
d=component(matsize(F),1);F1=component(F,1);for(j=1,d,
print(component(F1,j)));for(z=2^d,2*2^d-1,bin=binary(z);mod=List;for(j=1,d,
listput(mod,component(bin,j+1),j));M=1;for(j=1,d,ch=component(mod,j);
if(ch==1,F1j=component(F1,j);ej=component(F1j,3);FF1j=idealpow(K,F1j,ej);
M=idealmul(K,M, FF1j)));Idn=idealpow(K,M,n);Kpn=bnrinit(K,Idn);
Hpn=component(component(Kpn,5),2);L=List;e=component(matsize(Hpn),2);R=0;
for(k=1,e,c=component(Hpn,e-k+1);w=valuation(c,p);if(w>0,R=R+1;
listinsert(L,p^w,1)));print("S=",mod," rk(A_S)=",R," A_S=",L)))}

P=x^4 + 23
p=3
[3, [-1, 1, 0, 0]~, 1, 1, [1, 0, 1, 1]~]
[3, [1, 1, 0, 0]~, 1, 1, [0, 0, 0, 1]~]
[3, [2, 0, 2, 0]~, 1, 2, [0, 0, -1, 0]~]
S=[0, 0, 0] rk(A_S)=1 A_S=[3]
S=[0, 0, 1] rk(A_S)=1 A_S=[68630377364883]
S=[0, 1, 0] rk(A_S)=1 A_S=[3]
S=[0, 1, 1] rk(A_S)=2 A_S=[68630377364883, 22876792454961]
S=[1, 0, 0] rk(A_S)=1 A_S=[3]
S=[1, 0, 1] rk(A_S)=2 A_S=[68630377364883, 22876792454961]
S=[1, 1, 0] rk(A_S)=1 A_S=[68630377364883]
S=[1, 1, 1] rk(A_S)=3 A_S=[68630377364883, 22876792454961, 22876792454961]
p=71
[71, [-32, 1, 0, 0]~, 1, 1, [0, 29, -5, 4]~]
[71, [32, 1, 0, 0]~, 1, 1, [4, 29, 9, 4]~]
[71, [31, 0, 2, 0]~, 1, 2, [-29, 0, 2, 0]~]
S=[0, 0, 0] rk(A_S)=0 A_S=[]
S=[0, 0, 1] rk(A_S)=1 A_S=[9095120158391]
S=[0, 1, 0] rk(A_S)=1 A_S=[71]
S=[0, 1, 1] rk(A_S)=2 A_S=[9095120158391, 9095120158391]
S=[1, 0, 0] rk(A_S)=1 A_S=[71]
S=[1, 0, 1] rk(A_S)=2 A_S=[9095120158391, 9095120158391]
S=[1, 1, 0] rk(A_S)=2 A_S=[9095120158391, 71]
S=[1, 1, 1] rk(A_S)=3 A_S=[9095120158391, 9095120158391, 9095120158391]
\end{verbatim}
\normalsize

\smallskip
The user is invited to vary $n$ at will to certify the numerical results when 
the $p$-rank of ${\mathcal A}_{K,S}$ is unknown (i.e., when $S \subsetneq P$).
In the above examples, some ${\mathcal T}_{K,S}$ are of order $p$ and the 
$\Z_p$-rank of ${\mathcal A}_{K,S}$ is $0$ or $1$.

\subsubsection{Program for various $q$ and $p=2$}
\smallskip
The analogous program is the following ($n=32$ is large enough):

\footnotesize
\begin{verbatim}
{bq=3;Bq=100;p=2;n=32;forprime(q=bq,Bq,P=x^4+q;print();
print("q=",q," ",Mod(q,16));K=bnfinit(P,1);F=idealfactor(K,p);
d=component(matsize(F),1);F1=component(F,1);for(j=1,d,
print(component(F1,j)));for(z=2^d,2*2^d-1,bin=binary(z);mod=List;for(j=1,d,
listput(mod,component(bin,j+1),j));M=1;for(j=1,d,ch=component(mod,j);
if(ch==1,F1j=component(F1,j);ej=component(F1j,3);F1j=idealpow(K,F1j,ej);
M=idealmul(K,M,F1j)));Idn=idealpow(K,M,n);Kpn=bnrinit(K,Idn);
Hpn=component(component(Kpn,5),2);L=List;e=component(matsize(Hpn),2);R=0;
for(k=1,e,c=component(Hpn,e-k+1);w=valuation(c,p);if(w>0,R=R+1;
listinsert(L,p^w,1)));print("S=",mod," rk(A_S)=",R," A_S=",L)))}
\end{verbatim}
\normalsize

We give an example of each congruence class $q \!\!\pmod {16}$; 
for $q \equiv 7 \pmod {16}$, the decomposition of $(2)$ in $\Q(\sqrt{-q})$
is $(2) = {\mathfrak p} \cdot {\mathfrak p}^*$ where $e_{\mathfrak p}=2$
in $K/\Q$:

\smallskip
\footnotesize
\begin{verbatim}
q=17    Mod(1, 16)
[2, [1, 1, 0, 0]~, 4, 1, [1, 1, 1, 1]~]
S=[0] rk(A_S)=2 A_S=[8, 2]
S=[1] rk(A_S)=5 A_S=[4294967296, 2147483648, 2147483648, 8, 2]

q=3    Mod(3, 16)
[2, [1, 0, -1, 0]~, 2, 2, [1, 0, 1, 0]~]
S=[0] rk(A_S)=0 A_S=[]
S=[1] rk(A_S)=3 A_S=[4294967296, 2147483648, 1073741824]

q=5    Mod(5, 16)
[2, [1, 1, 0, 0]~, 4, 1, [1, 1, 1, 1]~]
S=[0] rk(A_S)=1 A_S=[4]
S=[1] rk(A_S)=3 A_S=[8589934592, 4294967296, 4294967296]

q=7    Mod(7, 16)
[2, [0, -1, 0, 1]~, 2, 1, [1, 0, 0, 1]~]
[2, [0, 1, 0, 0]~, 1, 2, [1, 1, 0, 0]~]
S=[0, 0] rk(A_S)=0 A_S=[]
S=[0, 1] rk(A_S)=2 A_S=[1073741824, 4]
S=[1, 0] rk(A_S)=1 A_S=[2147483648]
S=[1, 1] rk(A_S)=4 A_S=[2147483648, 2147483648, 1073741824, 2]

q=41    Mod(9, 16)
[2, [1, 1, 0, 0]~, 4, 1, [1, 1, 1, 1]~]
S=[0] rk(A_S)=2 A_S=[16, 2]
S=[1] rk(A_S)=4 A_S=[8589934592, 4294967296, 2147483648, 8]

q=11    Mod(11, 16)
[2, [1, 0, -1, 0]~, 2, 2, [1, 0, 1, 0]~]
S=[0] rk(A_S)=0 A_S=[]
S=[1] rk(A_S)=3 A_S=[4294967296, 2147483648, 1073741824]

q=13    Mod(13, 16)
[2, [1, 1, 0, 0]~, 4, 1, [1, 1, 1, 1]~]
S=[0] rk(A_S)=1 A_S=[4]
S=[1] rk(A_S)=3 A_S=[8589934592, 4294967296, 4294967296]

q=31    Mod(15, 16)
[2, [-1, 0, 0, 1]~, 1, 1, [0, 0, 0, 1]~]
[2, [0, 1, -1, 0]~, 2, 1, [1, 1, 0, 0]~]
[2, [2, 0, 1, 1]~, 1, 1, [1, 0, 1, 1]~]
S=[0, 0, 0] rk(A_S)=0 A_S=[]
S=[0, 0, 1] rk(A_S)=1 A_S=[4]
S=[0, 1, 0] rk(A_S)=2 A_S=[2147483648, 4]
S=[0, 1, 1] rk(A_S)=3 A_S=[2147483648, 1073741824, 8]
S=[1, 0, 0] rk(A_S)=1 A_S=[4]
S=[1, 0, 1] rk(A_S)=3 A_S=[1073741824, 4, 2]
S=[1, 1, 0] rk(A_S)=3 A_S=[2147483648, 1073741824, 8]
S=[1, 1, 1] rk(A_S)=5 A_S=[2147483648, 1073741824, 1073741824, 8, 2]
\end{verbatim}
\normalsize

\begin{remark} A more complete table shows some rules:

\smallskip
\quad (i) For $q \equiv 3 \pmod 8$, ${\mathcal T}_{K,S}=1$ 
for $S=\es$ and $S = P = \{{\mathfrak p}\}$;

\smallskip
\quad (ii) For $q \equiv 5 \pmod 8$, ${\mathcal T}_{K,\es} = \Cl_K \simeq \Z/4\Z$ 
and ${\mathcal T}_{K,P}=1$ for $P = \{{\mathfrak p}\}$ (which means that 
the $2$-Hilbert class field of $K$ is contained in the compositum of the 
$\Z_2$-extensions of $K$);

\smallskip
\quad (iii) For $q \equiv 7 \pmod {16}$, for $S=\{{\mathfrak p}\}$
with $e_{\mathfrak p}=2$, we get ${\mathcal T}_{K,S} \simeq \Z/4\Z$ and
for  $S=\{{\mathfrak p}^*\}$ with $e_{{\mathfrak p}^*}=1$, we get ${\mathcal T}_{K,S}=1$;
then ${\mathcal T}_{K,P} \simeq \Z/2\Z$.

\smallskip
These properties may be proved easily and are left to 
the reader as exercises on the ${\rm Log}_S^{}$-function (Definition \ref{4}): consider 
first the arithmetic of the subfield $k=\Q(\sqrt{-q})$ and use fixed point formulas 
\eqref{fix} in $K/k$.

\smallskip
\quad (iv) For $q \equiv 15 \pmod {16}$, the results do not follow any obvious 
rule and offers some interesting examples as the following ones:

\footnotesize
\begin{verbatim}
q=5503 
[2, [-1, 0, 0, 1]~, 1, 1, [0, 0, 0, 1]~]
[2, [0, 1, -1, 0]~, 2, 1, [1, 1, 0, 0]~]
[2, [2, 0, 1, 1]~, 1, 1, [1, 0, 1, 1]~]
S=[0, 0, 0] rk(A_S)=0 A_S=[]
S=[0, 0, 1] rk(A_S)=1 A_S=[512]
S=[0, 1, 0] rk(A_S)=2 A_S=[2147483648, 8]
S=[0, 1, 1] rk(A_S)=3 A_S=[2147483648, 1073741824, 16]
S=[1, 0, 0] rk(A_S)=1 A_S=[512]
S=[1, 0, 1] rk(A_S)=3 A_S=[1073741824, 512, 2]
S=[1, 1, 0] rk(A_S)=3 A_S=[2147483648, 1073741824, 16]
S=[1, 1, 1] rk(A_S)=5 A_S=[2147483648, 1073741824, 1073741824, 16, 2]

q=8191 
[2, [-1, 0, 0, 1]~, 1, 1, [0, 0, 0, 1]~]
[2, [0, 1, -1, 0]~, 2, 1, [1, 1, 0, 0]~]
[2, [2, 0, 1, 1]~, 1, 1, [1, 0, 1, 1]~]
S=[0, 0, 0] rk(A_S)=0 A_S=[]
S=[0, 0, 1] rk(A_S)=1 A_S=[64]
S=[0, 1, 0] rk(A_S)=2 A_S=[2147483648, 64]
S=[0, 1, 1] rk(A_S)=3 A_S=[2147483648, 1073741824, 128]
S=[1, 0, 0] rk(A_S)=1 A_S=[64]
S=[1, 0, 1] rk(A_S)=3 A_S=[1073741824, 64, 2]
S=[1, 1, 0] rk(A_S)=3 A_S=[2147483648, 1073741824, 128]
S=[1, 1, 1] rk(A_S)=5 A_S=[2147483648, 1073741824, 1073741824, 128, 2]

q=123551 
[2, [-1, 0, 0, 1]~, 1, 1, [0, 0, 0, 1]~]
[2, [0, 1, -1, 0]~, 2, 1, [1, 1, 0, 0]~]
[2, [2, 0, 1, 1]~, 1, 1, [1, 0, 1, 1]~]
S=List([0, 0, 0]) rk(A_S)=0 A_S=List([])
S=List([0, 0, 1]) rk(A_S)=1 A_S=List([16])
S=List([0, 1, 0]) rk(A_S)=2 A_S=List([2147483648, 16])
S=List([0, 1, 1]) rk(A_S)=3 A_S=List([2147483648, 1073741824, 32])
S=List([1, 0, 0]) rk(A_S)=1 A_S=List([16])
S=List([1, 0, 1]) rk(A_S)=3 A_S=List([1073741824, 16, 2])
S=List([1, 1, 0]) rk(A_S)=3 A_S=List([2147483648, 1073741824, 32])
S=List([1, 1, 1]) rk(A_S)=5 A_S=List([2147483648, 1073741824, 1073741824, 32, 2])
\end{verbatim}
\normalsize
\end{remark}

\appendix

\section{Appendix: History of abelian $p$-ramification}

\subsection{Motivations}
\smallskip
We intend, in this detailed survey, to give a maximum of practical 
information and results about the torsion groups ${\mathcal T}_{K,S}$ that we have 
numerically computed in the first part of the paper with a PARI/GP program.
Since all the invariants, associated with ${\mathcal T}_{K,S}$, need numerical 
computations for a better understanding, we choose the more suitable technical 
presentation (the main philosophcal remark is that {\it they are all equivalent}).

\smallskip
For convenience, we indicate both the original historical contributions 
and the corresponding results processed systematically in our book \cite{Gr3}\,[Gr2003].

\smallskip
We will not detail the immense domains of pro-$p$-groups and Galois 
cohomology, whose main purpose is for instance the existence of infinite towers 
of $S$-ramified extensions and the Fontaine--Mazur conjecture studied 
by various schools of mathematicians (for this, see, e.g., \cite[\S\,10]{NSW}\,[NSW2000]), 
nor the analytic aspects as the non-vanishing at $s=1$ of complex $L$-series 
associated to elliptic curves\,$\ldots$
Similarly, we shall not consider the context of Iwasawa's theory 
because this efficient tool does not exempt from having the ``basic'' 
arithmetical properties of the corresponding objects. 

\smallskip
Note that the solutions of the analogous problems of $S$-ramification 
over local fields are not sufficient for a ``globalization'' over a number field 
$K$ as remarqued by Nguyen Quang Do in \cite[\S\,9]{Ng1}\,[Nqd1982]. Indeed, the global
theory depends on Leopoldt's conjecture (usually assumed) and the torsion
groups ${\mathcal T}_{K,S}$ are, in some sense, refinements of this 
conjecture.

\smallskip
So we will focus, mainly, on class field theory and on these specific deep 
$p$-adic properties or conjectures  which are, in our opinion, the main 
obstructions for many contemporary researches.

\smallskip
We will not give the most general statements but restrict ourselves to the 
case of $S$-ramification, $S \subseteq P$, whithout decomposition of
finite or infinite places (indeed, in these more elaborate 
cases, the formalism is identical and may be found in our book). Since
the properties of $S$-ramification may be used by many researchers 
working on different subjects, we will try to explain the numerous 
steps of its progress. This must be understood for practical information and 
will be an opportunity to clarify the vocabulary and the main contributions.

\smallskip
We apologize for the probable lack of references (and citation of their authors). 

\subsection{Prehistory}\label{story}
\smallskip
The origin of interest for $S$-ramification theory over a number field is probably 
a paper of Brumer \cite{Br}\,[Bru1966], following Serre's book \cite{Se1}\,[Ser1964] 
and seems also due to a lecture by \v Safarevi\v c (1963) showing the importance 
of the subject.
In \cite{Sa}\,[Sha1964], \v Safarevi\v c gives the cohomological characteristics 
of the group ${\mathcal G}_{K,S}$ (number of generators and relations, 
cohomological dimension\,$\ldots$). 

\smallskip
Recall at this step the Golod--\v Safarevi\v c
theorem (1964), named soon after the theorem of 
Golod--\v Safarevi\v c--Gasch\"utz--Vinberg, saying that if a pro-$p$-group
${\mathcal G}$ is finite, then $r({\mathcal G}) > \frac{1}{4}\,(d({\mathcal G}))^2$
where $d({\mathcal G})$ (resp. $r({\mathcal G})$) is the minimal number
of generators (resp. relations) for the presentation of ${\mathcal G}$.
All of this was developed in Koch's book \cite{Ko}\,[Koch1970]
from the works of many German mathematicians and is amply
improved in \cite{NSW}\,[NSW2000]
(see also in Hajir--Maire \cite{HM0,HM00}\,[HM2001-2002a] a good introduction on the subject 
and some of its developments \cite{HM3}\,[HM2002b], \cite{M2,M3}\,[Mai2010-2018], 
\cite{HM2,HM1}\,[HM2018a-2018b]).

\smallskip
More precisely, in \cite[Th\'eor\`eme I]{Sa}\,[Sha1964], \v Safarevi\v c gives,
for any number field $K$ and any set of places $S$, the main 
formula \eqref{cha} that we recall:

\subsubsection{\v Safarevi\v c formula}\label{00}
\smallskip
The $p$-rank of the $\Z_p$-module ${\mathcal A}_{K,S}$
(giving the minimal number of generators
${\rm dim}_{\F_p}({\rm H}^1({\mathcal G}_{K,S},\Z/p\Z))$
of ${\mathcal G}_{K,S}$) is:
\begin{equation}\label{chaS}
{\rm rk}_p({\mathcal A}_{K,S})  =  {\rm rk}_p \big (V_{K,S}/K_{(S)}^{\times p} \big) 
 + \sm_{{\mathfrak p} \in S \,\cap\, P} [K_{\mathfrak p}  : \Q_p] 
 + \sm_{{\mathfrak p} \in S} \delta_{\mathfrak p} - \delta_K - ( r_1 + r_2 -1) , 
\end{equation}

\noindent
where $K_{(S)}^\times :=  \big \{ \alpha \in K^\times,
\hbox{$\alpha$ prime to $S$} \big\}$,
$V_{K,S} := \big \{ \alpha \in K_{(S)}^{\times},\  (\alpha) = {\mathfrak a}^p \big\}$, 
then $\delta_{\mathfrak p} = 1$ or $0$ according as the 
completion $K_{\mathfrak p}$ contains $\mu_p$ or not, and $\delta_K = 1$ or $0$ 
according as $K$ contains $\mu_p$ or not.

\medskip
Of course, ${\rm dim}_{\F_p}({\rm H}^2({\mathcal G}_{K,S},\Z/p\Z))$, giving
the minimal number of relations, is easily obtained only when $P \subseteq S$ 
(equal to ${\rm rk}_p({\mathcal T}_{K,S})$ under Leopoldt's conjecture), 
which shall explain the forthcoming studies about this: 

\smallskip
\cite{NSW}\,[NSW2000], \cite{Win1,Win2}\,[Win1989-1991], \cite{Ya}\,[Yam1993], 
\cite{M1}\,[Mai2005], \cite{Lab}\,[Lab2006], \cite{Vog}\,[Vog2007], \cite{Ko}\,[Koch1970], 
\cite{Neum}\,[Neu1976], Haberland \cite{Hab}\,[Hab1978], 
\cite{Sch}\,[Sch2010], El Habibi--Ziane \cite{ElHZ}\,[ElHZ2018]\,$\ldots$.

\subsubsection{Kubota formalism}
\smallskip
Mention that Kubota \cite{Kub}\,[Kub1957] begins  the study of the structure of the
dual ${\mathcal A}_{K,S}^*$ of ${\mathcal A}_{K,S}$, study which is based 
on the Grunwald--Wang theorem and which leads to a characterization of this 
group in terms of its fundamental invariants called, following Kaplansky, the
``Ulm invariants''. 

\smallskip
Then in \cite{Mi}\,[Miki1978], Miki uses this formalism, about $\ell(=p)$-ramification,
then class field theory, Iwasawa's theory, in direction of Leopoldt's conjecture. 
Some statements, equivalent to some results that we shall recall in this survey 
(as well as the notion of $p$-rationality and its main properties), should be mentioned 
in his paper, despite the difficulty of translating vocabulary and technique.

\subsection{Main developments after the pioneering works}
\smallskip
The computation of ${\rm rk}_p({\mathcal T}_{K,P})$, from Kummer 
theory, is already given by Bertrandias--Payan \cite{BP}\,[BP1972], then in 
\cite[Th\'eor\`emes I.2, I.3, Corollaire 1]{Gr01}\,[Gr1982] and by many 
authors, for instance by means of cohomological techniques (e.g., 
\cite[Proposition 3]{Mo2}\,[Mov1990]). 

\smallskip
This will give reflection formulas.
 
\subsubsection{Reflection and rank formulas}\label{0}
\smallskip
From \cite[Chapitre III, \S\,10]{Gr2}\,[Gr1998] or \cite[\S\,II.5.4.1]{Gr3}[Gr2003].
Using the \v Safarevi\v c formula and Kummer theory when $K$ contains
the group $\mu_p$ of $p$th roots of unity, and writing (for $S \subseteq P$):
$$\hbox{$P=S \,\cup\, \Sigma\ $ with $\ S \,\cap\, \Sigma = \es$,} $$

\noindent
one obtains the reflection theorem in its simplest form:
\begin{equation}\label{rf}
{\rm rk}_p({\mathcal A}_{K,S}^{\Sigma}) -
{\rm rk}_p({\mathcal A}_{K, \Sigma}^{S\,\rm res}) = \order S - \order \Sigma
+ \sm_{{\mathfrak p} \in S}\, [K_{\mathfrak p} : \Q_p] - r_1 - r_2, 
\end{equation}

\noindent
where ${\mathcal A}_{K,S}^{\Sigma}$ is the Galois group of the maximal
abelian pro-$p$-extension of $K$ in $H_{K,S}$, which is 
$\Sigma \,\cup\, \{\infty\}$-split 
(i.e., in which all the places of $\Sigma \,\cup\,\{\infty\}$ split completely), and
similarly for the definition of ${\mathcal A}_{K, \Sigma}^{S\rm res}$, in the 
restricted sense for $p=2$ (i.e., only $S$-split); in other words, the mention of 
$\{\infty\}$ is implicit in the upper script to give the ordinary sense when $p=2$.

\smallskip
The case $S=P$ leads to the following well-known result:

\begin{theorem} \label{thmf} \label{tp=1} \cite[Proposition III.4.2.2]{Gr3}\,[Gr2003].
Let $K$ be any number field fulfilling the Leopoldt conjecture
for the prime number $p$. Let $K':=K(\mu_p)$, $P'$ be the set of
$p$-places above $P$ in $K'$, and let $P^{\rm dec}$ be
the set of $p$-places of $K$ totaly split in $K'$. 
Let $\omega$ be the Teichm\"uller character and denote by ${\rm rk}_\omega$ 
the $p$-rank of an isotypic $\omega$-component for ${\rm Gal}(K'/K)$; then:
$${\rm rk}_p({\mathcal T}_{K,P}) = {\rm rk}_\omega(\Cl^{P' \rm res}_{K'})
+ \order P^{\rm dec} - \delta_K, $$

\noindent
where $\Cl_{K'}^{P'\rm res}$ is the quotient of the $p$-class group $\Cl_{K'}^{\rm res}$
by the subgroup generated by the classes of $P'$ (in the restricted sense for $p=2$) and where
$\delta_K = 1$ or $0$ according as $K$ contains $\mu_p$ or not. Whence the following properties:

\smallskip
\quad (i) If $\mu_p \subset K$, we then have
${\rm rk}_p({\mathcal T}_{K,P}) = {\rm rk}_p(\Cl_{K}^{P\,\rm res}) + \order P - 1$.

\smallskip
\quad (ii) We have ${\mathcal T}_{K,P}=1$ if and only if:

\smallskip
\qquad $\bullet$ $\mu_p \not\subset K$: then $P^{\rm dec}=\es$ 
and the $\omega$-component of $\Cl_{K}^{\rm res}$ is trivial;

\smallskip
\qquad $\bullet$  $\mu_p \subset K$: $p$ does not split in $K/\Q$
and the unique ${\mathfrak p} \in P$ generates~$\Cl_{K}^{\rm res}$.
\end{theorem}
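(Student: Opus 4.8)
The plan is to specialize the reflection formula \eqref{rf} to the case $S=P$, $\Sigma=\emptyset$, and then translate everything to the field $K':=K(\mu_p)$ where Kummer theory is available. First I would take $S=P$ in \eqref{rf}, so that $\mathcal{A}_{K,P}^{\emptyset}=\mathcal{A}_{K,P}$ and the ``reflected'' object $\mathcal{A}_{K,\emptyset}^{P\,\rm res}$ is the Galois group of the maximal abelian $p$-extension of $K$ that is unramified everywhere and totally split at $P$ (and at $\infty$ when $p=2$); by class field theory this is the quotient $\Cl_K^{P\,\rm res}$ of the class group by the subgroup generated by the $p$-places. Under Leopoldt's conjecture one has $\widetilde r_{K,P}^{}=r_2+1$ and $\sum_{\mathfrak p\in P}[K_{\mathfrak p}:\Q_p]=r_1+2r_2$, so $\mathrm{rk}_p(\mathcal{A}_{K,P})=\widetilde r_{K,P}^{}+\mathrm{rk}_p(\mathcal{T}_{K,P})=r_2+1+\mathrm{rk}_p(\mathcal{T}_{K,P})$. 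Substituting these into \eqref{rf} should already yield, in the case $\mu_p\subset K$ (so $\delta_K=1$ and $K'=K$, $P'=P$, $P^{\rm dec}=P$), the clean identity of part (i):
$$\mathrm{rk}_p(\mathcal{T}_{K,P})=\mathrm{rk}_p(\Cl_K^{P\,\rm res})+\order P-1.$$

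Next I would handle the general case $\mu_p\not\subset K$ by ascending to $K'=K(\mu_p)$, whose degree $d:=[K':K]$ divides $p-1$ and is prime to $p$. The standard device is that $\mathrm{Gal}(K'/K)$ acts on the $p$-group $\Cl_{K'}^{P'\,\rm res}$ and, because $d\mid p-1$, this $\Z_p[\mathrm{Gal}(K'/K)]$-module splits into isotypic $\omega^i$-components. The torsion module $\mathcal{T}_{K,P}$ is controlled by a single such component: I would show that $\mathcal{T}_{K,P}$ corresponds to the $\omega$-eigenspace (the Teichm\"uller character piece) of the reflected class group upstairs, together with the contribution of those $p$-places of $K$ that split completely in $K'/K$, i.e.\ $P^{\rm dec}$. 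The formula $\mathrm{rk}_p(\mathcal{T}_{K,P})=\mathrm{rk}_\omega(\Cl_{K'}^{P'\,\rm res})+\order P^{\rm dec}-\delta_K$ then emerges by applying the $\mu_p\subset K'$ case (already proved) to $K'$ and projecting onto the $\omega$-component via the idempotent attached to the Teichm\"uller character; the $\order P^{\rm dec}$ term accounts for the places of $P$ whose local factors genuinely contribute after the Galois descent, while the remaining places are absorbed.

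The derivation of the two equivalence conditions in part (ii) is then largely a matter of reading off when the right-hand side vanishes. If $\mu_p\not\subset K$, then $\delta_K=0$, and $\order P^{\rm dec}=0$ precisely means no $p$-place of $K$ splits completely in $K'$ (equivalently the local cyclotomic extensions are all nontrivial), while $\mathrm{rk}_\omega(\Cl_{K'}^{P'\,\rm res})=0$ says the $\omega$-component is trivial; both must hold simultaneously for $\mathcal{T}_{K,P}=1$. If $\mu_p\subset K$, I would start from part (i): $\mathcal{T}_{K,P}=1$ forces $\order P=1$ (so $p$ does not split in $K/\Q$) together with $\mathrm{rk}_p(\Cl_K^{P\,\rm res})=0$, and the latter, given that $\Cl_K^{P\,\rm res}$ is $\Cl_K$ modulo the class of the unique prime above $p$, says exactly that this prime generates $\Cl_K$.

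I expect the main obstacle to be the Galois-descent step: making rigorous the passage from $K$ to $K'$ and the identification of $\mathcal{T}_{K,P}$ with the $\omega$-isotypic piece upstairs. The subtleties are (a) the restricted-sense conventions when $p=2$, where $\mu_p\subset K$ automatically and the real places require care, and (b) tracking exactly how the local $P$-contributions and the $\order P^{\rm dec}$ correction survive the taking of the $\omega$-component, since the ``trivial'' character and the Teichm\"uller character interact differently with the Kummer pairing. The algebra of isotypic components over $\Z_p[\mathrm{Gal}(K'/K)]$ is routine once set up, but verifying that the reflection formula \eqref{rf} applied over $K'$ descends to give precisely the stated $\omega$-rank—rather than a sum over several characters—is where the real bookkeeping lies.
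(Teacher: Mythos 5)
Your overall route is the paper's own: the paper presents this theorem exactly as the specialization $S=P$, $\Sigma=\es$ of the reflection formulas --- \eqref{rf} when $\mu_p\subset K$, and its character-refined version \eqref{rfomega} when $\mu_p\not\subset K$ --- and your derivation of part (i) (substituting $\wt r_{K,P}^{}=r_2+1$ and $\sum_{{\mathfrak p}\in P}[K_{\mathfrak p}:\Q_p]=r_1+2r_2$ into \eqref{rf}) is correct. The genuine gap is in your treatment of the case $\mu_p\not\subset K$. What you propose --- ``applying the $\mu_p\subset K'$ case (already proved) to $K'$ and projecting onto the $\omega$-component via the idempotent'' --- cannot work as stated: what part (i) gives over $K'$ is an equality of integers (total $p$-ranks), and a numerical identity has no isotypic components onto which to project. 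What is actually needed is the ${\rm Gal}(K'/K)$-equivariant refinement of the whole Kummer/\v Safarevi\v c argument: Kummer duality pairs the $\chi$-component of the radical with the $\omega\chi^{-1}$-component of the corresponding Galois group, so the rank computation must be redone character by character, followed by Galois descent ${\mathcal T}_{K,P}\simeq e_1{\mathcal T}_{K',P'}$ (legitimate because $[K':K]$ divides $p-1$, hence is prime to $p$, and transfer is injective under Leopoldt). In that refined computation the term $\order P^{\rm dec}$ does not arise from other places being ``absorbed'': it arises because, by Frobenius reciprocity, the $\omega$-part of the induced (permutation) contribution at ${\mathfrak p}$ is nonzero exactly when the decomposition group of ${\mathfrak p}$ in $K'/K$ is trivial, i.e., $\delta_{\mathfrak p}=1$ iff ${\mathfrak p}\in P^{\rm dec}$. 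This refined statement is precisely the paper's formula \eqref{rfomega} (with $S=P$, $\Sigma=\es$, and $r_1+r_2-1-r_{K,P}^{}=0$ under Leopoldt); without proving it, the main case of the theorem is not established. A similar equivariant remark (the $\omega$-part of the submodule generated by the classes of $P'$ vanishes once $P^{\rm dec}=\es$) is also needed in part (ii) to pass from $\Cl_{K'}^{P'\rm res}$ to the full class group.

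Second, you have the $p=2$ convention backwards, and it is not cosmetic. ``Restricted sense'' means the narrow sense: real infinite places are allowed to complexify; it is emphatically not ``unramified, totally split at $P$ and at $\infty$'' --- the paper says so when introducing \eqref{rf} (``in the restricted sense for $p=2$, i.e., only $S$-split''). Since $\mu_2\subset K$ always, part (i) \emph{is} the entire theorem when $p=2$, and with your reading it would assert the formula with the ordinary class group $\Cl_K$ modulo $P$-classes, whereas the correct statement uses the narrow group $\Cl_K^{\rm res}$; likewise in (ii) the unique ${\mathfrak p}\in P$ must generate $\Cl_K^{\rm res}$, not $\Cl_K$. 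This ordinary/restricted asymmetry between the two sides of the reflection formula is forced by Kummer duality at $p=2$ (the real places behave like ramified places), and flipping it yields a false statement there.
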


\begin{example}\label{A1}
For $K=\Q(\mu_p)=:\Q(\zeta_p)$, $p\ne 2$, taking 
$\Sigma=\es$ and $S=P$:
$${\rm rk}_p({\mathcal A}_{K,P}) -
{\rm rk}_p({\mathcal A}_{K, \es}^P) = 1 +p-1 - 
\hbox{$\frac{p-1}{2} = \frac{p+1}{2} $}.$$

Since ${\mathcal A}_{K, \es}^P = \Cl_K/ \langle\cl_K({\mathfrak p}) \rangle$, with
${\mathfrak p}=(1-\zeta_p)$, and ${\mathcal A}_{K,P} \simeq 
\Z_p^{\frac{p+1}{2}} \plus {\mathcal T}_{K,P}$,
this yields:
\begin{equation}\label{reflection}
{\rm rk}_p({\mathcal T}_{K,P}) = {\rm rk}_p(\Cl_K),
\end{equation}

\noindent
as well as the writing ${\rm rk}_p({\mathcal T}_{K,P}^\pm) 
= {\rm rk}_p(\Cl_K^\mp)$ (for analogous equalities with pairs of isotopic 
components associated by means of the mirror involution, and the 
consequences for Vandiver's conjecture, see \cite{Gr11}\,[Gr2019b]).
\end{example}

If the condition $S \,\cup\, \Sigma = P$ is not fulfilled,
we have (still assuming $\mu_p \subset K$) the reflection formula:
\begin{equation}\label{rf*}
{\rm rk}_p({\mathcal A}_{K,S}^{\Sigma}) -
{\rm rk}_p(\Cl_K^{S \rm res}({\mathfrak m}^*) )=
 \order S - \order \Sigma
+ \sm_{{\mathfrak p} \in S} [K_{\mathfrak p} : \Q_p] - r_1 - r_2,\ \ 
\hbox{with ${\mathfrak m}^* := \prd_{{\mathfrak p} \in \Sigma} 
{\mathfrak p}^{p e_{\mathfrak p}+1} \cdot 
\prd_{{\mathfrak p} \in P \setminus S \,\cup\,\Sigma} 
{\mathfrak p}^{p e_{\mathfrak p}}$}
\end{equation}

\noindent
where $\Cl_K^{S \rm res}({\mathfrak m}^*)$ is the  $S$-split
$p$-ray class group of modulus ${\mathfrak m}^*$
(see \cite[Exercise II.5.4.1, proof of (iii)]{Gr3}\,[Gr2003] and (iv) for the case $p=2$).
Note that $\Cl_K^{S \rm res}({\mathfrak m}^*)$ is isomorphic to
a quotient of ${\mathcal A}_{K,P \setminus S}^{S \rm res}$.

\smallskip
Finaly, if $K$ does not contain $\mu_p$, but assuming 
$P=S \,\cup\, \Sigma$ with $S \,\cap\, \Sigma = \es$,
the general formula is:
\begin{equation}\label{rfomega}
{\rm rk}_p({\mathcal T}_{K,S}^{\Sigma}) =
{\rm rk}_\omega({\mathcal A}_{K', \Sigma'}^{S'\,\rm res}) 
+ \sm_{{\mathfrak p} \in S} \delta_{\mathfrak p}
-  \delta_K - \order \Sigma
- \big(r_1 + r_2 - 1 - r_{K,S}^\Sigma \big),
\end{equation}

\noindent
where
$r_{K,S}^\Sigma= \sm_{{\mathfrak p} \in S}[K_{\mathfrak p} : \Q_p]-
\wt r_{K,S}^\Sigma$; here, $\wt r_{K,S}^\Sigma \leq r_2+1$ is the 
$\Z_p$-rank of $\Z_p {\rm log}_S(I_{K,S})$
modulo $\Q_p  {\rm log}_S (E_K^\Sigma)$
dealing with the group $E_K^\Sigma$ of $\Sigma$-units of $K$
(see also \cite{M1}\,[Mai2005], \cite{Vog}\,[Vog2007] for some applications).

\smallskip
One can restrict some of the above equalities to $p$-class groups, 
giving only inequalities on the $p$-ranks (Hecke theorem (1910), Scholz 
theorem (1932), Leopoldt Spiegelungssatz (1958), 
Armitage--Fr\"ohlich--Serre, Oriat, for $p=2$.

\smallskip
For reflection theorems and formulas with characters,
see \cite[II.5.4, Theorem II.5.4.5)]{Gr3}\,[Gr2003] from the computations of
\cite[Ch. I, Theorem 5.18]{Gr2}\,[Gr1998]
where $p$-rank formulas link $p$-class groups and torsion groups 
as in Theorem \ref{tp=1} (this context is used by Ellenberg--Venkatesh in \cite{EV}\,[EV2007]
for the $\varepsilon$-conjecture on $p$-class groups). 

\smallskip
For the annihilation of the Galois module ${\mathcal T}_{K,P}$, of real 
abelian extensions $K/\Q$, in relation with the construction of  $p$-adic 
$L$-functions and reflection principle, see \cite{Gr12}\,[Gr2018c] and its bibliography. 
There is probably {\it equivalent information} whatever the process 
(algebraic or analytic), as shown by Oriat in \cite{Or}\,[Ori1986].
This logical aspect should deserve further investigation.

\subsubsection{Regulators and $p$-adic residues of the $\zeta_p$-functions} \label{1}
\smallskip
We continue the story with the $p$-adic analytic computations of the residue
of the $p$-adic $\zeta$-function at $s=1$ of real abelian fields $K$ by
Amice--Fresnel \cite{AF}\,[AF1972], from Kubota--Leopoldt $L_p$-functions (1964),
by Coates \cite{Co}\,[Coa1977], Serre \cite{Se2}\,[Ser1978] introducing $p$-adic 
pseudo-measures, then by Colmez \cite{Col}\,[Col1988] in full generality, via the formula
$\frac{1}{2^{[K : \Q]-1}} \ds \,\lim_{s \to 1} (s-1)\, \zeta_{K,p}(s) = 
\frac{R_p\, h\, E_p(1)}{\sqrt D}$, 
where $R_p$ is the classical $p$-adic regulator, $h$ the class number, $D$
the discriminant of $K$ and $E_p(1)$ the eulerian factor 
$\prod_{{\mathfrak p} \mid p} (1-{\rm N}{\mathfrak p}^{-1})$.
For totally real fields, the normalised $p$-adic regulator 
${\mathcal R}_{K,P}$, in the formula \eqref{partial},
is given (under Leopoldt's conjecture) by the expression
\cite[Proposition 5.2]{Gr7}\,[Gr2018a]:
$$\order {\mathcal R}_{K,P} \sim \frac{1}{2} \cdot
\frac{\big(\Z_p : {\rm log}({\rm N}_{K/\Q}(U_{K,P})) \big)}
{ \order {\mathcal W}_{K,P} \cdot \prod_{{\mathfrak p} \mid p}{\rm N} {\mathfrak p}}
\cdot \frac {R_p}{\sqrt {D}}, $$

\noindent
where $\sim$ means equality up to a $p$-adic unit factor; whence:
$$ \hbox{$\frac{1}{2^{[K : \Q]-1}}$}\lim_{s \to 1} (s-1)\, \zeta_{K,p}(s) = 
\hbox{$\frac{1}{p\,[K \,\cap\, \Q^{\rm c} : \Q]}$} \, \order{\mathcal T}_{K,P}, $$

\noindent
where $ \Q^{\rm c}$ is the $\Z_p$-cyclotomic extension of $K$.
In \cite{Hat1}\,[Hat1987], Hatada uses the link between the $p$-adic valuation of
$\zeta_K(2-p)$ and that of ${\mathcal R}_{K,P}$ to study the $p$-rationality of some totally 
real number fields; he studies the case of quadratic fields with general Fibonacci sequences 
(from the fundamental unit), a method that will be rediscovered by some authors
to characterize the $p$-rationality.

\smallskip
Mention the relative version of the Coates formula in the
totally real case:

\begin{theorem} \cite[Th\'eor\`eme III.3]{Gr01}\,[Gr1982]. Let $L/K$
be an abelian extension of totally real number fields fulfilling
the Leopoldt conjecture. Let ${\mathcal N}_{L/K}$ be the group of 
local norms and let $\Cl_{L/K}^{\rm\, gen} := {\rm Gal}(H_L^{\rm ab}/LH_K)$ 
be the $p$-genus group in $L/K$; the superscript~${}^*$ denotes  
${\rm Ker}({\rm N}_{L/K})$. Then:
\begin{equation*}
\begin{aligned}
\order {\mathcal T}_{L,P}\  \sim \  \frac{\order {\mathcal T}_{K,P}}
{ \big[L \,\cap\, H_{K,P} : L \,\cap\, K^{\rm c} \big ]} \times 
 \frac{\prod_{{\mathfrak l} \, \nmid \, p} e_{{\mathfrak l},p}}{[L : L \,\cap\, H_{K,P}]} 
& \times \order \Cl_{L/K}^{\rm \, gen} 
\times \big (E_K \,\cap\, {\mathcal N}_{L/K} : {\rm N}_{L/K}(E_L) \big) \\
& \times \big ({\rm log}_P(U_{L,P}^*) : {\rm log}_P (\ov E_L^{\,*}) \big)
\times \big({\rm tor}_{\Z_p}(U_{L,P}^*) : \mu_p^* \big), 
\end{aligned}
\end{equation*}

\noindent
where $\mu_p^*=1$ for $p\ne 2$ and $\order \mu_2^* = {\rm gcd}(2,[L:K])$.
\end{theorem}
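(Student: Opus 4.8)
The plan is to derive the relative formula as a quotient of the absolute Coates $p$-adic residue formula (displayed just above) applied in turn to $L$ and to $K$. First I would write, for each of the two totally real fields,
$$\frac{1}{2^{[K:\Q]-1}}\lim_{s\to 1}(s-1)\,\zeta_{K,p}(s)=\frac{R_p\,h_K\,E_p^K(1)}{\sqrt{D_K}}\sim\frac{1}{p\,[K\cap\Q^{\rm c}:\Q]}\,\order\mathcal{T}_{K,P},$$
and the analogue for $L$, then divide. On the analytic side the residue quotient is transparent: since $L/K$ is abelian with group $G:={\rm Gal}(L/K)$, one has $\zeta_{L,p}=\prod_{\chi\in\widehat G}L_p(\,\cdot\,,\chi)$ with only the trivial character contributing the pole at $s=1$ (the others being $p$-adically holomorphic there), so the quotient of residues equals the quotient of the arithmetic expressions $R_p\,h\,E_p(1)/\sqrt D$ up to the explicit cyclotomic-degree and power-of-$2$ normalizations. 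This reduces the statement to interpreting $\frac{R_p(L)\,h_L\,E_p^L(1)\,\sqrt{D_K}}{R_p(K)\,h_K\,E_p^K(1)\,\sqrt{D_L}}$ as the product of the remaining five index factors.

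Next I would dissect this arithmetic quotient piece by piece. For $h_L/h_K$ the natural tool is the genus (ambiguous class number) formula for the abelian extension $L/K$, which expresses the $G$-fixed part of $\Cl_L$ through $h_K$, the product of ramification indices, the degree $[L:K]$, and the unit-norm index $(E_K:E_K\cap{\rm N}_{L/K}L^\times)$; extracting the $p$-part produces exactly the genus group $\order\Cl_{L/K}^{\rm gen}$ and the index $(E_K\cap\mathcal{N}_{L/K}:{\rm N}_{L/K}(E_L))$ of the statement, and supplies the tame factor $\prod_{{\mathfrak l}\nmid p}e_{{\mathfrak l},p}$. For the regulator quotient $R_p(L)/R_p(K)$ I would use compatibility of the $p$-adic logarithm with ${\rm N}_{L/K}$: decomposing $E_L\otimes\Z_p$, up to finite index, into the part lifted from $E_K$ and the norm-kernel part $E_L^{*}$, the $p$-adic regulator of $L$ factors as $R_p(K)$ times the relative contribution measured by $({\rm log}_P(U_{L,P}^{*}):{\rm log}_P(\ov E_L^{\,*}))$, which is the fifth factor. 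Here the identification of the normalized regulator $\mathcal{R}_{K,P}$ with $R_p$ (Remark \ref{rema1} and \cite[Proposition 5.2]{Gr7}\,[Gr2018a]), together with the structural identity \eqref{partial}, is what bridges $\order\mathcal{T}_{K,P}$, $\order\mathcal{T}_{L,P}$ and the analytic side.

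Finally, the discriminant quotient $\sqrt{D_L}/\sqrt{D_K}$ is rewritten by the conductor--discriminant formula and the Euler quotient $E_p^L(1)/E_p^K(1)$ by the splitting type of the $p$-places; combined with the cyclotomic-degree normalizations these assemble into the two degree indices $[L\cap H_{K,P}:L\cap K^{\rm c}]$ and $[L:L\cap H_{K,P}]$, which encode how much of the maximal $P$-ramified abelian pro-$p$-extension of $K$ is already absorbed into $L$. The residual root-of-unity discrepancy is collected into the last factor $({\rm tor}_{\Z_p}(U_{L,P}^{*}):\mu_p^{*})$ by means of Lemma \ref{leo} and the definition of $\mathcal{W}_{L,P}$, the case $p=2$ being treated separately to yield $\order\mu_2^{*}={\rm gcd}(2,[L:K])$. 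I expect the main obstacle to be not the individual arithmetic inputs (each classical) but the exact bookkeeping of these normalization and degree factors: matching the transcendental ``$\sim$ up to a $p$-adic unit'' quotient against the class-field-theoretic indices on the nose, tracking the interplay of $[L\cap H_{K,P}:L\cap K^{\rm c}]$, $[L:L\cap H_{K,P}]$ and the powers of $2$, and handling the $p=2$ subtleties in the definition of the norm-kernel torsion $\mu_2^{*}$.
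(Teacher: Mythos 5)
The first thing to say is that the paper contains no proof of this theorem: it is reproduced, in the historical Appendix, as a quotation of \cite[Th\'eor\`eme III.3]{Gr01}\,[Gr1982], so there is no in-paper argument to measure your proposal against. What can be compared is the shape of the formula, and that shape reflects a purely class-field-theoretic proof rather than an analytic one. For totally real fields satisfying Leopoldt's conjecture one has $\wt{K\,}^P=K^{\rm c}$ and $\wt{L\,}^P=L^{\rm c}$, so ${\mathcal T}_{L,P}={\rm Gal}(H_{L,P}/L^{\rm c})$; since $K^{\rm c}\subseteq H_{K,P}$, the modular law gives $H_{K,P}\cap LK^{\rm c}=(L\cap H_{K,P})\,K^{\rm c}$, whence
$$\order {\mathcal T}_{L,P}\ =\ \order {\rm Gal}\big(H_{L,P}/LH_{K,P}\big)\cdot \frac{\order {\mathcal T}_{K,P}}{[L\cap H_{K,P}:L\cap K^{\rm c}]},$$
and the entire content of the theorem is the computation of ${\rm Gal}(H_{L,P}/LH_{K,P})$, i.e.\ (by Artin reciprocity, restriction corresponding to the id\`ele norm) of the image of the norm-kernel id\`eles: this is exactly what the tame factor, $\order\Cl_{L/K}^{\rm gen}$, the unit-norm index and the three starred (norm-kernel) factors measure. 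That is the route of the cited reference; $p$-adic $L$-functions never enter.

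Your analytic route has two concrete gaps, located precisely in the bookkeeping you defer. (a) You propose to treat $h_L/h_K$ by the ambiguous-class-number (genus) formula; but Chevalley's formula computes $\order\Cl_L^G$ (equivalently the genus number) in terms of $h_K$ and local data --- it gives no control whatsoever on $h_L$ itself, whose non-ambiguous part genus theory cannot see. Consistently, $h_L$ does not occur in the target formula, and a correct proof never isolates the quotient $h_L/h_K$. (b) The identification $R_p(L)\sim R_p(K)\cdot\big({\rm log}_P(U_{L,P}^*):{\rm log}_P(\ov E_L^{\,*})\big)$ is false as stated: the fifth factor is a lattice index inside the norm-kernel part of the local logarithm space (of $\Z_p$-rank $[L:\Q]-[K:\Q]$), built from the \emph{local} units $U_{L,P}^*$, and a quotient of global regulators cannot equal it --- passing from $R_p/\sqrt D$ to indices of logs of units inside logs of local units necessarily drags the discriminant and the Euler factors along, exactly as in \cite[Proposition 5.2]{Gr7}\,[Gr2018a] quoted in the paper. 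Hence your factor-by-factor matching (regulator quotient $\leftrightarrow$ fifth factor; discriminant and Euler quotients $\leftrightarrow$ the two degree indices) cannot be carried out: the indices $[L\cap H_{K,P}:L\cap K^{\rm c}]$ and $[L:L\cap H_{K,P}]$ arise from the Galois-theoretic dissection above, not from conductor--discriminant computations. (A smaller point: Chevalley's product runs over all ramified places, the theorem's only over the tame ones.) The upshot is that your analytic reduction, although legitimate (under Leopoldt, $L_p(1,\chi)\neq0$ for $\chi\neq1$, so the residue quotient is well defined), only cancels notation and leaves an algebraic identity essentially equivalent to the theorem itself, which the classical tools you invoke do not suffice to prove.
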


\subsubsection{Cohomological interpretation} \label{1bis}
\smallskip
In \cite{Ng2}\,[Nqd1986], Nguyen Quang Do gives the cohomological interpretation
of the dual of ${\mathcal T}_{K,P}$:
${\mathcal T}_{K,P}^* \simeq {\rm H}^2({\mathcal G}_{K,P},\Z_p)$,
considered as the first of the mysterious non positive twists 
${\rm H}^2 ({\mathcal G}_{K,P}, \Z_p(i))$ of the motivic cohomology;
for concrete results of genus type about the corresponding case of motivic 
tame kernels, see Assim--Movahhedi \cite{AM}\,[AM2019] and its important bibliography 
which would deserve to be in part among our references, despite it is beyond our goals.

\smallskip
It is indeed well known that ${\rm H}^2({\mathcal G}_{K,P},\Z_p)$ 
does appear as a tricky obstruction in many questions of Galois theory 
over number fields, whatever the technical approach. For 
${\rm H}^2({\mathcal G}_{K,S},\Z_p)$, see \cite[Appendix]{Gr3}[Gr2003].

\smallskip
But considering the two ``equivalent'' invariants 
${\rm H}^2({\mathcal G}_{K,P}, \Z_p)$ and ${\mathcal T}_{K,P}$, only the last 
one may be used, with arithmetic or analytic tools, to obtain nume\-rical experiments
and to understand the true intrinsic $p$-adic difficulties.

\subsubsection{Principal Conjectures and Theorems} \label{2}
\smallskip
Considering the invariants $\Cl_K$ and ${\mathcal T}_{K,P}$
as fundamental objects, we have given, for the abelian fields $K$, the conjectural 
behaviour of their isotopic $\chi$-components for irreducible
$p$-adic characters $\chi$ in \cite{Gr00}\,[Gr1977]; the proofs of these conjectures and
of some improvements in Iwasawa's theory are well known and the reader
may refer to the illuminating paper of Ribet \cite{Ri}\,[Rib2008] about the so-called
``Principal Theorem'' stemming from Bernoulli--Kummer--Herbrand
then Ribet--Mazur--Wiles--Thaine--Rubin--Kolyvagin--Greither
works on cyclotomy and $p$-adic $L$-functions, as a prelude of 
wide generalizations in the same spirit.

\subsection{Basic $p$-adic properties of  ${\mathcal A}_{K,P}$  $\&$ 
${\mathcal T}_{K,P}$} \label{3}
\smallskip
During the 1980's, we have written in \cite{Gr01,Gr02,Gr03}\,[Gr1982-1983-1984] 
the main properties of the groups ${\mathcal T}_{K,P}$ with their behaviour in any 
extension $L/K$ and proved (assuming Leopoldt's conjecture
 in the Galois closure of $L$) that the transfer maps:
$${\mathcal A}_{K,P} \too {\mathcal A}_{L,P} \ \ \& \ \  {\mathcal T}_{K,P} 
\too {\mathcal T}_{L,P}$$ 

\noindent
are always injective \cite[Th\'eor\`eme I.1]{Gr01}\,[Gr1982]; which has major consequences 
for the arithmetic of number fields (e.g., non-capitulation in an extension contrary 
to class groups). Of course, this property has been obtained soon after by Jaulent, 
Nguyen Quang Do and others with different techniques.

\subsubsection{The $p$-adic ${\rm Log}_S^{}$-functions} 
\smallskip
\begin{definition} \cite[\S\,2, Th\'eor\`eme 2.1]{Gr02}\,[Gr1983],
\cite[\S\,III.2.2]{Gr3}\,[Gr2003].\label{4}
Let  $I_{K,P}$ be the group of prime to $p$ ideals  of $K$.
We define the logarithm function:
$${\rm Log}_P^{} : I_{K,P} \too \Big(
\plus_{{\mathfrak p} \in P} K_{\mathfrak p}\Big) \Big /\Q_p {\rm log}_P(E_K)$$ 

\noindent
as follows. For any ideal ${\mathfrak a} \in I_{K,P}$ let $m$ be such that
${\mathfrak a}^m =: (\alpha)$, $\alpha \in K^\times$, then 
${\rm Log}_P^{}({\mathfrak a}) := \frac{1}{m}  {\rm log}_P(\alpha) 
\pmod{\Q_p {\rm log}_P(E_K)}$.
\end{definition}

The main property of ${\rm Log}_P^{}$ is that for any 
ideal ${\mathfrak a} \in I_{K,P}$, ${\rm Log}_P^{}({\mathfrak a})$ defines the Artin 
symbol in the compositum $\wt {K\,}^P$ of the $\Z_p$-extensions of $K$ by means of
the canonical exact sequence:
$$1 \to {\mathcal T}_{K,P} \too {\mathcal A}_{K,P}  \mathop {\tooo}^{{\rm Log}_P^{}} 
{\rm Log}_P^{}(I_{K,P}) \simeq {\rm Gal}(\wt {K\,}^P\!\!/K) \to 1 ,$$

\noindent
which may be generalized with arbitrary $S \subseteq P$:
$$1 \to {\mathcal T}_{K,S} \too {\mathcal A}_{K,S}   \mathop {\tooo}^{{\rm Log}_S^{}} 
{\rm Log}_S^{} (I_{K,S}) \simeq {\rm Gal}(\wt {K\,}^S\!\!/K) \to 1 ,$$

\noindent
with an obvious definition of ${\rm Log}_S^{}({\mathfrak a})$ in
$\plus_{{\mathfrak p} \in S} K_{\mathfrak p}$ modulo $\Q_p{\rm log}_S^{}(E_K)$.

\smallskip
This formalism is equivalent to that given by the theory of pro-$p$-groups 
(here ${\mathcal G}_{K,P}$), but may yield numerical computations as follows:

\smallskip
The formula for $\order {\mathcal T}_{K,S}$, $S \subseteq P$,
is the following \cite[Theorems III.2.5]{Gr1}\,[Gr1986], \cite[Corollary III.2.6.1]{Gr3}\,[Gr2003]
(under Leopoldt's conjecture):
\begin{equation}\label{TLog}
\order {\mathcal T}_{K,S} = \order {\mathcal W}_{K,S} \times
\order {\mathcal R}_{K,S} \times
\frac {\order \Cl_K}{\big ({\Z_p}{\rm Log}_S^{}(I_{K,S}) :
{\Z_p}{\rm Log}_S^{} (P_{K,S}) \big)}, 
\end{equation}

\noindent
where $P_{K,S}$ is the group of principal ideals prime to $S$,
so that ${\Z_p}{\rm Log}_S^{} (P_{K,S})$ depends obviously on
${\rm log}_S(U_{K,S})$ modulo $\Q_p {\rm log}_S(E_K)$. 
When $S \subsetneq P$, ${\mathcal W}_{K,S}$ is not necessarily equal
to ${\rm tor}_{\Z_p}(U_{K,S})/ \iota_S^{}(\mu_K)$ (cf. Lemmas \ref{exact}, \ref{leo}). 

The denominator in \eqref{TLog} gives the degree $[\wt {K\,}^S\!\! \!\cap \! H_K : K]$
and the quotient gives $\order \,\wt {\Cl_K\!}^{\!S}$.
 
\smallskip
For $S=P$, the ${\rm Log}_P^{}$-function allows, when $\mu_p \subset K$, the numerical
determination of the initial Kummer radical contained in $\wt {K\,}^P$ 
\cite{Gr0}\,[Gr1985], \cite{Ja1}\,[Jau1986].

\subsubsection{Fixed point formula}
\smallskip
Then we have obtained a fixed point formula for $S=P$ which, contrary
to Chevalley's formula for class groups in cyclic extensions
\cite{Che}\,[Che1933], does exist whatever the Galois extension $L/K$
(\cite[\S\,5]{Gr02}\,[Gr1983], \cite[Section~2\,(c)]{Ja0}\,[Jau1984], 
\cite[Proposition 6]{Gr1}\,[Gr1986], \cite[Appendice I]{Mo1}\,[Mov1988], 
\cite[Appendice]{MN}\,[MN1990]):

\begin{theorem}\label{fix}\cite[\S\,IV.3, Theorem 3.3]{Gr3}\,[Gr2003].
Let $L/K$ be a Galois extension of number fields and
$G := {\rm Gal}(L/K)$. Let~$p$ be a prime number;
we assume that $L$ satisfies the Leopoldt conjecture for~$p$. Then:
$$\order{\mathcal T}_{L,P}^G  = \order {\mathcal T}_{K,P} \times
\frac{\prd_{{\mathfrak l}\,\notdiv\, p} e^{}_{{\mathfrak l},p}}
{\Big( \sm_{{\mathfrak l}\,\notdiv\, p}\,
\hbox{$\frac{1}{e^{}_{{\mathfrak l},p}}$}  \Z_p {\rm Log}_P^{}({\mathfrak l}) + 
\Z_p {\rm Log}_P^{}(I_{K,P}): \Z_p {\rm Log}_P^{}(I_{K,P}) \Big)}, $$

\noindent
where $e^{}_{{\mathfrak l},p}$ is the $p$-part of the ramification index of 
${\mathfrak l}$ in $L/K$.
\end{theorem}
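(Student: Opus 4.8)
The plan is to establish this as a genus-type (ambiguous-invariant) formula for the module $\mathcal{T}$, in direct analogy with Chevalley's class-number formula but built on the canonical ${\rm Log}_P^{}$-exact sequence of Definition~\ref{4}, which is available for \emph{every} Galois $L/K$. I would first invoke the injectivity of the transfer $j:\mathcal{T}_{K,P}\to\mathcal{T}_{L,P}$ (recalled above from \cite{Gr01}\,[Gr1982], valid under the Leopoldt conjecture for $L$). Since $j$ is induced over $K$, its image lies in the $G$-invariants, so $j(\mathcal{T}_{K,P})\subseteq\mathcal{T}_{L,P}^G$ and $\order j(\mathcal{T}_{K,P})=\order\mathcal{T}_{K,P}$. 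This reduces the theorem to the single index identity
$$\big[\mathcal{T}_{L,P}^G : j(\mathcal{T}_{K,P})\big] = \frac{\prd_{{\mathfrak l}\,\notdiv\,p} e^{}_{{\mathfrak l},p}}{\Big( \sm_{{\mathfrak l}\,\notdiv\,p} \tfrac{1}{e^{}_{{\mathfrak l},p}}\Z_p{\rm Log}_P^{}({\mathfrak l}) + \Z_p{\rm Log}_P^{}(I_{K,P}) : \Z_p{\rm Log}_P^{}(I_{K,P})\Big)},$$
i.e. to the computation of a genus index, which is the heart of the argument.

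The computation would be organised around the exact sequence $1\to\mathcal{T}_{L,P}\too\mathcal{A}_{L,P}\too\Z_p{\rm Log}_P^{}(I_{L,P})\to 1$ and its $K$-analogue. As $H_{L,P}$ is characteristic, $\wt G:={\rm Gal}(H_{L,P}/K)$ acts, and $\mathcal{T}_{L,P}^G$ is the subgroup of ${\rm Gal}(H_{L,P}/\wt {L\,}^{P})$ centralised by $\wt G$. For the ``trivial'' part I would use that every tame prime is unramified in $H_{K,P}/K$, so $H_{K,P}\,L$ is $P$-ramified over $L$; hence $H_{K,P}\,L\subseteq H_{L,P}$ and the transfer $j$ is realised through this compositum. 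Taking $G$-fixed points in the two ${\rm Log}_P^{}$-sequences and using that $\Z_p{\rm Log}_P^{}(I_{K,P})$ and $\Z_p{\rm Log}_P^{}(I_{L,P})$ are $\Z_p$-free, the index $[\mathcal{T}_{L,P}^G:j(\mathcal{T}_{K,P})]$ is converted into a comparison, inside $\big(\plus_{{\mathfrak p}\mid p}K_{\mathfrak p}\big)/\Q_p{\rm log}_P(E_K)$, between the lattice $\Z_p{\rm Log}_P^{}(I_{K,P})$ and the larger lattice obtained by adjoining the fixed-point contributions of the primes ramifying in $L/K$.

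The heart of the matter is the local contribution of each tame prime ${\mathfrak l}\notdiv p$. Over $L$ a prime ${\mathfrak L}\mid{\mathfrak l}$ is unramified in $\wt {L\,}^{P}/L$, so its Artin symbol in ${\rm Gal}(\wt {L\,}^{P}/L)$ is ${\rm Log}_P^{}({\mathfrak L})$; but ${\mathfrak l}$ ramifies in $L/K$ with $p$-part $e^{}_{{\mathfrak l},p}$ of its ramification index, and relating ${\mathfrak L}$ upstairs to ${\mathfrak l}$ downstairs forces the fractional symbol $\tfrac{1}{e^{}_{{\mathfrak l},p}}{\rm Log}_P^{}({\mathfrak l})$ to enter the fixed-point (genus) lattice. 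Unramified tame primes give $e^{}_{{\mathfrak l},p}=1$ and ${\rm Log}_P^{}({\mathfrak l})\in\Z_p{\rm Log}_P^{}(I_{K,P})$, so contribute nothing; the ramified ones assemble simultaneously into the numerator $\prd_{{\mathfrak l}\notdiv p}e^{}_{{\mathfrak l},p}$ and into the displayed denominator index.

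The step I expect to be the main obstacle is the control of the free part $\Gamma_{L,P}\simeq\Z_p^{\wt r_{L,P}^{}}$ and the proof that passing to $G$-fixed points of the ${\rm Log}_P^{}$-sequence produces no surviving higher-cohomology term feeding back into $\mathcal{T}_{L,P}^G$ — so that the only fixed torsion beyond $j(\mathcal{T}_{K,P})$ is exactly the tame-inertia genus part. This is precisely where the Leopoldt conjecture for $L$ is indispensable: it fixes the $\Z_p$-rank of $\mathcal{A}_{L,P}$ at $\wt r_{L,P}^{}=r_2(L)+1$, validates the ${\rm Log}_P^{}$-sequence, and (via Lemma~\ref{leo}) gives ${\rm tor}_{\Z_p}^{}(\ov {E\,}^{P}_{\!\!L})=\iota_P^{}(\mu_L)$, so that the $G$-action on the free quotient $\Z_p{\rm Log}_P^{}(I_{L,P})$ contributes no spurious invariants.
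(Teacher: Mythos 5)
Two preliminary remarks: the paper itself contains no proof of Theorem \ref{fix} (it is quoted from \cite[Theorem IV.3.3]{Gr3}\,[Gr2003], the original argument being in \cite[\S\,5]{Gr02}\,[Gr1983]), so your attempt has to stand on its own; and its opening reduction is indeed sound. Under Leopoldt for $L$ the map $j:{\mathcal T}_{K,P}\to{\mathcal T}_{L,P}$ is injective with image in ${\mathcal T}_{L,P}^G$, and since $\Z_p{\rm Log}_P^{}(I_{L,P})$ is torsion-free one has ${\mathcal T}_{L,P}^G={\rm tor}_{\Z_p}({\mathcal A}_{L,P}^G)$; you also correctly identify the origin of the fractional symbols (if ${\mathfrak l}\,{\mathcal O}_L=(\prod_i{\mathfrak L}_i)^{e_{\mathfrak l}}$, then ${\rm Log}_P^{}(\prod_i{\mathfrak L}_i)$ maps to $\frac{1}{e_{\mathfrak l}}{\rm Log}_P^{}({\mathfrak l})$). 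But from that point on your text describes the expected shape of the answer rather than proving it. The assertion that \emph{taking $G$-fixed points in the two ${\rm Log}_P^{}$-sequences converts the index $[{\mathcal T}_{L,P}^G:j({\mathcal T}_{K,P})]$ into the stated lattice comparison} is exactly the content of the theorem, not a step towards it: the $K$-level sequence carries no $G$-action, and the $L$-level sequence only yields the exact sequence $0\to{\mathcal T}_{L,P}^G\to{\mathcal A}_{L,P}^G\to(\Z_p{\rm Log}_P^{}(I_{L,P}))^G$, which is useless until one computes ${\mathcal A}_{L,P}^G$ in terms of data over $K$. Nothing in your outline performs that computation.

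Concretely, the missing core is the class-field-theoretic (idelic) step: one writes ${\mathcal A}_{L,P}$ as a quotient of the $p$-completed idele group of $L$ by the closure of $L^\times\cdot\prod_{v\,\nmid\, p}U_v$ (ordinary sense at infinity) and computes $G$-cohomology using Hilbert 90, the vanishing of ${\rm H}^1$ of the idele group, and the local isomorphism ${\rm H}^1(G_{\mathfrak l},U_{\mathfrak l})\simeq\Z/e_{\mathfrak l}\Z$ at ramified tame places. This is where the numerator $\prod_{{\mathfrak l}\,\nmid\, p}e_{{\mathfrak l},p}$ actually comes from, where the formula's validity for \emph{arbitrary} (not merely cyclic) $G$ is secured, and, most importantly, where one proves the exhaustion statement that every $G$-invariant torsion element beyond $j({\mathcal T}_{K,P})$ arises from tame inertia subject to exactly the relations measured by the denominator index — i.e.\ that the formula is an equality and not just a divisibility. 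Your last paragraph concedes this is ``the main obstacle'' but then dismisses it by asserting that Leopoldt fixes the $\Z_p$-rank, validates the sequence, and so ``the $G$-action on the free quotient contributes no spurious invariants.'' That inference is false as stated: a free $\Z_p$-lattice with $G$-action is in general not cohomologically trivial, and Leopoldt by itself gives no control on ${\rm H}^1(G,\cdot)$ of these modules (nor does Lemma \ref{leo}, which concerns ${\rm tor}_{\Z_p}(\ov{E\,}^P_{\!\!L})$). The required cohomological control comes from class field theory and the tame local computation, and until that is supplied the proposal is a correct heuristic reading of the formula, not a proof of it.
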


\begin{remark}
Contrary to the computation of 
${\rm tor}_{\Z_p}^{}(U_{K,P} / \ov {E\,}^P_{\!\!K})$,
that of the $\Q_p$-vector space $\Q_p {\rm log}_P(E_K)$ {\it does
not need the knowledge of the group of units $E_K$}; it only depends
of Leopoldt's conjecture (assumed) and its $\Q_p$-dimension is
$r_1+r_2-1$; the case of $\Q_p {\rm log}_S(E_K)$ is more mysterious.
\end{remark}

The case of totally real fields is easier since the ${\rm Log}$-function
trivializes because we have $\bigoplus_{{\mathfrak p} \in P} K_{\mathfrak p} =
\Q_p {\rm log}_P(E_K)\, \plus\, \Q_p$, which allows explicit computations
\cite[Th\'eo\-r\`eme III.1]{Gr01}\,[Gr1982]:

\begin{corollary}\label{fixreal}\cite[Exercise IV.3.3.1]{Gr3}\,[Gr2003].
In the case of a totally real number field $L$, the above formula becomes
(under Leopoldt's conjecture):
$\order{\mathcal T}_{L,P}^G = \order {\mathcal T}_{K,P} \cdot 
p^{\rho - r} \cdot \prod_{{\mathfrak l} \nmid p} e^{}_{{\mathfrak l},p}$,
where $p^r \sim [L : K]$ and $\rho$ only depends on the 
decomposition of the ramified primes $\ell \nmid p$ in $L/K$.
\end{corollary}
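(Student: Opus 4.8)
The plan is to deduce the corollary directly from the fixed point formula of Theorem \ref{fix} by exploiting the degeneration of ${\rm Log}_P$ in the totally real case. Since $L$ (hence $K$) is totally real we have $r_2=0$, so $\bigoplus_{\mathfrak p \in P} K_{\mathfrak p}$, of $\Q_p$-dimension $\sum_{\mathfrak p \in P}[K_{\mathfrak p}:\Q_p]=[K:\Q]$, splits as $\Q_p{\rm log}_P(E_K)\oplus \Q_p$ with the first summand of dimension $r_1+r_2-1=[K:\Q]-1$. Hence the codomain $\big(\bigoplus_{\mathfrak p \in P}K_{\mathfrak p}\big)/\Q_p{\rm log}_P(E_K)$ of ${\rm Log}_P$ is a single $\Q_p$-line, and both $\Z_p{\rm Log}_P(I_{K,P})$ and the enlarged module $\sum_{\mathfrak l \nmid p}\frac{1}{e_{\mathfrak l,p}}\Z_p{\rm Log}_P(\mathfrak l)+\Z_p{\rm Log}_P(I_{K,P})$ occurring in the denominator of Theorem \ref{fix} are $\Z_p$-lattices of rank one inside it. Their index is therefore automatically a finite power of $p$, which is exactly what collapses the formula to the asserted shape.

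Next I would make this line concrete. The natural projection onto the $\Q_p$-factor is $\tau:(x_{\mathfrak p})_{\mathfrak p}\mapsto \sum_{\mathfrak p \in P}{\rm Tr}_{K_{\mathfrak p}/\Q_p}(x_{\mathfrak p})$: it is nonzero and annihilates $\Q_p{\rm log}_P(E_K)$, since for a unit $\varepsilon$ one has $\tau({\rm log}_P(\varepsilon))={\rm log}_p\big({\rm N}_{K/\Q}(\varepsilon)\big)={\rm log}_p(\pm 1)=0$, so $\tau$ identifies the quotient line with $\Q_p$. Under $\tau$ the definition ${\rm Log}_P(\mathfrak a)=\frac{1}{m}{\rm log}_P(\alpha)$ (with $\mathfrak a^m=(\alpha)$) becomes $\tau({\rm Log}_P(\mathfrak a))=\frac{1}{m}{\rm log}_p\big({\rm N}_{K/\Q}(\alpha)\big)={\rm log}_p({\rm N}\mathfrak a)$, the $p$-adic logarithm of the absolute norm. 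In particular $\tau({\rm Log}_P(\mathfrak l))={\rm log}_p({\rm N}\mathfrak l)=f_{\mathfrak l}\,{\rm log}_p(\ell)$, where $\ell$ is the rational prime below $\mathfrak l$ and $f_{\mathfrak l}$ its residue degree, a quantity depending only on the splitting of $\mathfrak l$. Transporting everything to $\Q_p$ via $\tau$, I would write $\Z_p{\rm Log}_P(I_{K,P})=p^{c}\Z_p$ and $e_{\mathfrak l,p}=p^{v_{\mathfrak l}}$, so the enlarged lattice becomes $p^{c}\Z_p+\sum_{\mathfrak l}p^{\,v_p({\rm log}_p {\rm N}\mathfrak l)-v_{\mathfrak l}}\,\Z_p$.

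The remaining step is the index computation, and this is where the real bookkeeping lies. In a rank-one situation the index of $p^{c}\Z_p$ in the enlarged lattice equals $p^{s}$ with $s=\max\big(0,\ \max_{\mathfrak l}(v_{\mathfrak l}-(v_p({\rm log}_p {\rm N}\mathfrak l)-c))\big)$, an exponent built solely from the $p$-parts $v_{\mathfrak l}$ of the ramification indices and from the splitting data $f_{\mathfrak l},\ell$ of the ramified primes $\mathfrak l \nmid p$. Substituting $\prod_{\mathfrak l \nmid p}e_{\mathfrak l,p}=p^{\sum v_{\mathfrak l}}$ and this index into Theorem \ref{fix} turns the correction factor into the single power $p^{\sum_{\mathfrak l}v_{\mathfrak l}-s}$, and isolating $p^{r}\sim[L:K]$ exhibits the result as $\order{\mathcal T}_{K,P}\cdot p^{\rho - r}\cdot\prod_{\mathfrak l \nmid p}e_{\mathfrak l,p}$. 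I expect the main obstacle to be precisely the control of this exponent: one must pin down the lattice constant $c$ (the generator of $\Z_p{\rm Log}_P(I_{K,P})$) and the valuations $v_p({\rm log}_p {\rm N}\mathfrak l)$, and then verify the inequality $v_{\mathfrak l}\le r$ that guarantees $\rho\ge 0$, so that $\rho$ records only the decomposition of the ramified primes while the full degree enters solely through $r$.
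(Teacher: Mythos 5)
Your proposal is correct and is essentially the paper's own route: the paper justifies Corollary \ref{fixreal} precisely by the remark that, for totally real fields under Leopoldt's conjecture, $\bigoplus_{{\mathfrak p} \in P} K_{\mathfrak p} = \Q_p\, {\rm log}_P(E_K) \oplus \Q_p$, so that ${\rm Log}_P$ lands in a single $\Q_p$-line and the index in Theorem \ref{fix} collapses to an explicit power of $p$, the bookkeeping being delegated to \cite[Exercise IV.3.3.1]{Gr3} and \cite[Th\'eor\`eme III.1]{Gr01}. Your trace-map identification $\tau$ (equivalently ${\mathfrak a} \mapsto {\rm log}_p({\rm N}{\mathfrak a})$) and the rank-one lattice index computation are exactly that ``explicit computation''; the one step you leave informal is immediate from your setup, since $v_p({\rm log}_p {\rm N}{\mathfrak l}) - c$ is the index exponent of the decomposition group of ${\mathfrak l}$ in ${\rm Gal}(K^{\rm c}/K) \simeq \Z_p$, which is precisely why $\rho$ records only decomposition data of the tamely ramified primes.
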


\subsubsection{$p$-primitive ramification} \label{5}
\smallskip
The fixed point formula of Theorem \ref{fix} allows to characterize the case where
$\order {\mathcal T}_{L,P}=1$ in a $p$-extension $L/K$:

\begin{corollary}\label{TL=1} Let $L/K$ be any finite $p$-extension.
Then ${\mathcal T}_{L,P}=1$ if and only if the two following conditions 
are fulfilled (under Leopoldt's conjecture):

\smallskip
\quad (i) ${\mathcal T}_{K,P} =1$;

\quad (ii) $\Big( \sm_{{\mathfrak l}\,\notdiv\, p}\,
\hbox{$\frac{1}{e^{}_{{\mathfrak l},p}}$}  \Z_p {\rm Log}_P^{}({\mathfrak l}) + 
\Z_p {\rm Log}_P^{}(I_{K,P}): \Z_p {\rm Log}_P^{}(I_{K,P}) \Big) =
\prd_{{\mathfrak l}\,\notdiv\, p} e^{}_{{\mathfrak l},p}$.
\end{corollary}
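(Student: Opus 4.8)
The plan is to deduce this characterization directly from the fixed point formula of Theorem~\ref{fix}, by reducing the vanishing of the whole torsion module to the vanishing of its $G$-invariants, where $G := {\rm Gal}(L/K)$.

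First I would record the elementary group-theoretic reduction. Since $L/K$ is a finite $p$-extension, $G$ is a finite $p$-group, and under Leopoldt's conjecture $\mathcal{T}_{L,P}$ is a finite abelian $p$-group carrying a $G$-action. For any nonzero such module $M$ one has $M^G \ne 0$: reduce to the socle $M[p]$, a nonzero $\F_p[G]$-module, and use that a finite $p$-group fixes a nonzero vector in any nonzero $\F_p$-vector space (by the congruence $\order V \equiv \order V^G \pmod p$). Consequently $\mathcal{T}_{L,P} = 1$ if and only if $\mathcal{T}_{L,P}^G = 1$, equivalently $\order \mathcal{T}_{L,P}^G = 1$. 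This is the conceptual heart of the argument: it converts a statement about the full module, which is not directly accessible, into one about the invariants, which is exactly what Theorem~\ref{fix} computes.

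Next I would substitute into the fixed point formula
$$\order\mathcal{T}_{L,P}^G = \order \mathcal{T}_{K,P} \times \frac{\prd_{{\mathfrak l}\,\notdiv\, p} e^{}_{{\mathfrak l},p}}{\Big( \sm_{{\mathfrak l}\,\notdiv\, p}\,\hbox{$\frac{1}{e^{}_{{\mathfrak l},p}}$}\,\Z_p{\rm Log}_P^{}({\mathfrak l}) + \Z_p{\rm Log}_P^{}(I_{K,P}) : \Z_p{\rm Log}_P^{}(I_{K,P}) \Big)}$$
and observe that both factors on the right are integer powers of $p$ that are at least $1$. For $\order \mathcal{T}_{K,P}$ this is clear. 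For the quotient, note that each $e^{}_{{\mathfrak l},p}$ and the displayed index is a power of $p$, that $\Z_p {\rm Log}_P^{}({\mathfrak l}) \subseteq \Z_p {\rm Log}_P^{}(I_{K,P})$ since ${\mathfrak l} \in I_{K,P}$, and that enlarging $\Z_p {\rm Log}_P^{}({\mathfrak l})$ to $\frac{1}{e^{}_{{\mathfrak l},p}}\Z_p {\rm Log}_P^{}({\mathfrak l})$ increases the lattice by a factor dividing $e^{}_{{\mathfrak l},p}$; hence the index divides $\prd_{{\mathfrak l}\,\notdiv\, p} e^{}_{{\mathfrak l},p}$ and the quotient is $\geq 1$. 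Alternatively, the injectivity of the transfer $\mathcal{T}_{K,P} \hookrightarrow \mathcal{T}_{L,P}^G$ recalled above forces $\order \mathcal{T}_{K,P} \mid \order \mathcal{T}_{L,P}^G$, yielding the same conclusion.

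Finally, $\order \mathcal{T}_{L,P}^G = 1$ expresses $1$ as a product of two powers of $p$ each $\geq 1$, which holds if and only if each factor equals $1$. The vanishing of the first factor is exactly condition~(i), namely $\mathcal{T}_{K,P} = 1$, and the vanishing of the second --- i.e. the index equalling $\prd_{{\mathfrak l}\,\notdiv\, p} e^{}_{{\mathfrak l},p}$ --- is exactly condition~(ii). This gives the stated equivalence. The only genuinely delicate point is the reduction step $\mathcal{T}_{L,P} = 1 \Leftrightarrow \mathcal{T}_{L,P}^G = 1$, which relies essentially on $G$ being a $p$-group; everything after it is bookkeeping with $p$-power indices.
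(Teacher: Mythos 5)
Your proof is correct and follows exactly the route the paper intends: the paper states the corollary as an immediate consequence of the fixed point formula of Theorem~\ref{fix}, and your argument supplies precisely the two supporting details that make this work (the Nakayama-type reduction ${\mathcal T}_{L,P}=1 \Leftrightarrow {\mathcal T}_{L,P}^G=1$ for the $p$-group $G$, and the fact that the index in the denominator divides $\prod_{{\mathfrak l}\,\nmid\, p} e^{}_{{\mathfrak l},p}$, so both factors are $p$-powers $\geq 1$). Nothing is missing; this matches the paper's approach.
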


\begin{definition} \cite[\S\,IV.3, (b)]{Gr3}\,[Gr2003].
When the condition (ii) is fulfilled, we say
that the $p$-extension $L/K$ is $p$-primitively ramified 
and that the set $T$ of tame places ${\mathfrak l}$, ramified
in $L/K$, is primitive \cite[Ch.\,III, Definition \& Remark]{Gr1}\,[Gr1986], 
which is equivalent (in terms of Frobenius automorphisms) to:
\begin{equation}
{\rm Gal} (\wt {K\,}^P \!\!/K) \simeq {\mathcal A}_{K,P}/{\mathcal T}_{K,P}
= \plus_{{\mathfrak l} \in T} \big\langle 
\big(\hbox{$\frac{\wt {K\,}^P \!\! /K}{{\mathfrak l}}$} \big) \big\rangle.
\end{equation}
\end{definition}

Of course, any $P$-ramified extension is $p$-primitively ramified.

\smallskip
Then in \cite[Ch. III, \S\,2, Theorem 2  \& Corollary]{Gr1}\,[Gr1986] are characterized,
for $p=2$ and $p=3$, the abelian $p$-extensions $K$ 
of $\Q$ such that ${\mathcal T}_{K,P} =1$.
This is connected with the ``regular kernel'' of $K$ which, from results of Tate,
follows similar properties which have been explained in a joint work with
Jaulent \cite{GJ}\,[GJ1989] and developped in Jaulent--Nguyen Quang Do
\cite{JN}\,[JN1993]. We can state:

\begin{theorem} \cite[Theorem III.4.2.5, Theorem IV.3.5]{Gr3}\,[Gr2003].\label{triviality}
Let $K$ be any number field.The following properties are equivalent:
 
\smallskip
\quad (i) $K$ satisfies the Leopoldt conjecture at $p$ and
${\mathcal T}_{K,P}=1$;

\smallskip
\quad (ii) ${\mathcal A}_{K,P} := {\mathcal G}_{K,P}^{\rm ab} 
= {\rm Gal}(H_{K,P} / K) \simeq \Z_p^{r_2+1}$,

\smallskip
\quad (iii) the Galois group ${\mathcal G}_{K,P}$ is a free pro-$p$-group on 
$r_2+1$ generators, which is equivalent to fulfill the following four conditions:

\smallskip
\qquad $\bullet$ $K$ satisfies the Leopoldt conjecture at $p$,

\smallskip
\qquad $\bullet$ $\Cl_K \simeq \Z_p{\rm Log}_P^{}(I_{K,P})\big / 
\big( {\rm log}_P(U_{K,P}) + \Q_p {\rm log}_P^{}(E_K) \big)$,

\smallskip
\qquad $\bullet$ ${\rm tor}_{\Z_p}(U_{K,P}) = \mu_p(K)$, 

\smallskip
\qquad $\bullet$ $\Z_p{\rm log}_P^{}(E_K) \hbox{ is a direct summand in } 
{\rm log}_P^{}(U_{K,P})$.
\end{theorem}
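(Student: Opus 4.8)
The plan is to separate the statement into two layers: the equivalence of (i), (ii) and the ``free pro-$p$ on $r_2+1$ generators'' part of (iii), which is cohomological, and then the decomposition of (iii) into the four arithmetic conditions, which rests on the product formula \eqref{partial}.

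For the first layer I would start from the (unconditional) Euler--Poincar\'e identity
$${\rm rk}_p({\rm H}^1({\mathcal G}_{K,P},\Z/p\Z)) = {\rm rk}_p({\rm H}^2({\mathcal G}_{K,P},\Z/p\Z)) + r_2+1,$$
together with ${\rm rk}_p({\rm H}^1({\mathcal G}_{K,P},\Z/p\Z)) = {\rm rk}_p({\mathcal A}_{K,P})$ and the characterization of free pro-$p$-groups by ${\rm H}^2({\mathcal G}_{K,P},\Z/p\Z)=0$. Reading the identity shows that ${\rm rk}_p({\mathcal A}_{K,P}) = r_2+1$ forces ${\rm H}^2=0$, hence ${\mathcal G}_{K,P}$ free on $d={\rm rk}_p({\mathcal A}_{K,P})=r_2+1$ generators, whence ${\mathcal A}_{K,P}\simeq\Z_p^{r_2+1}$; the reverse implications return ${\rm rk}_p({\mathcal A}_{K,P})=r_2+1$ at once, so (ii) and the freeness part of (iii) are equivalent. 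To link this with (i) I would use \eqref{rank}: since $\sm_{{\mathfrak p}\in P}[K_{\mathfrak p}:\Q_p]=r_1+2r_2$ and ${\rm dim}_{\Q_p}(\Q_p{\rm log}_P(E_K))\le r_1+r_2-1$ with equality exactly under Leopoldt, one gets $\wt r_{K,P}^{}\ge r_2+1$, with equality iff Leopoldt holds; combined with ${\rm rk}_p({\mathcal A}_{K,P})=\wt r_{K,P}^{}+{\rm rk}_p({\mathcal T}_{K,P})$ this identifies ``${\mathcal A}_{K,P}\simeq\Z_p^{r_2+1}$'' with ``Leopoldt holds and ${\mathcal T}_{K,P}=1$'', i.e. (i).

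For the second layer I would invoke the product formula \eqref{partial},
$$\order{\mathcal T}_{K,P} = \order \wt {\Cl_K\!}^{\!P}\cdot\order{\mathcal R}_{K,P}\cdot\order{\mathcal W}_{K,P},$$
in which all three factors are finite $p$-groups, so that ${\mathcal T}_{K,P}=1$ is equivalent to ${\mathcal W}_{K,P}={\mathcal R}_{K,P}=\wt {\Cl_K\!}^{\!P}=1$. It then remains to match each factor, under Leopoldt, with one of the four conditions. For ${\mathcal W}_{K,P}$, Lemma \ref{leo} gives ${\mathcal W}_{K,P}=W_{K,P}/\iota_P^{}(\mu_K)={\rm tor}_{\Z_p}(U_{K,P})/\mu_p(K)$, so its triviality is exactly ${\rm tor}_{\Z_p}(U_{K,P})=\mu_p(K)$, the third condition. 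For ${\mathcal R}_{K,P}={\rm tor}_{\Z_p}({\rm log}_P(U_{K,P})/{\rm log}_P(\ov {E\,}^P_{\!\!K}))$ with ${\rm log}_P(\ov {E\,}^P_{\!\!K})=\Z_p{\rm log}_P(E_K)$, triviality means the quotient of the free module ${\rm log}_P(U_{K,P})$ is torsion-free, i.e. $\Z_p{\rm log}_P(E_K)$ is a direct summand, the fourth condition; note that Leopoldt is what guarantees $\Z_p{\rm log}_P(E_K)$ has the expected rank $r_1+r_2-1$.

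The remaining identification, $\wt {\Cl_K\!}^{\!P}=1\Leftrightarrow$ the second condition, is the step I expect to be the main obstacle, since it is the only one genuinely coupling the class group to the local-unit logarithms. Here I would use that the denominator of \eqref{TLog} equals $[\wt {K\,}^P\!\!\cap H_K:K]$ and that $\Z_p{\rm Log}_P^{}(P_{K,P})$ is, modulo $\Q_p{\rm log}_P(E_K)$, the image of ${\rm log}_P(U_{K,P})$; consequently
$$\Z_p{\rm Log}_P^{}(I_{K,P})\big/\Z_p{\rm Log}_P^{}(P_{K,P}) = \Z_p{\rm Log}_P^{}(I_{K,P})\big/\big({\rm log}_P(U_{K,P})+\Q_p{\rm log}_P(E_K)\big),$$
a group of order $[\wt {K\,}^P\!\!\cap H_K:K]$ onto which $\Cl_K=I_{K,P}/P_{K,P}$ surjects via ${\rm Log}_P^{}$. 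The second condition asserts that this surjection is an isomorphism, whereas $\wt {\Cl_K\!}^{\!P}=1$ is precisely the statement $H_K\subseteq\wt {K\,}^P$, i.e. that the surjection is injective. The delicate content, which uses global reciprocity rather than formal bookkeeping, is to verify that the ${\rm Log}_P^{}$-induced map $\Cl_K\to\Z_p{\rm Log}_P^{}(I_{K,P})/(\cdots)$ is well defined and that its kernel is exactly $\wt {\Cl_K\!}^{\!P}$; once this is in place, the three equivalences assemble into (iii).
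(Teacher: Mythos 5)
The paper never actually proves this theorem: it is quoted from the author's book \cite{Gr3}\,[Gr2003] (Theorems III.4.2.5 and IV.3.5), so there is no in-paper proof to compare against, and your proposal must be judged on its own terms. Judged that way, it is correct, and it is essentially the natural reconstruction of the argument from ingredients the paper itself displays. Your first layer is exactly the cohomological bookkeeping of Section~\ref{section1}: the unconditional identity ${\rm rk}_p({\rm H}^1)={\rm rk}_p({\rm H}^2)+r_2+1$, the criterion ``free $\Leftrightarrow{\rm H}^2({\mathcal G}_{K,P},\F_p)=0$'', formula \eqref{rank} with its equality case $\wt r_{K,P}^{}=r_2+1\Leftrightarrow$ Leopoldt, and the splitting \eqref{AT}; the only caveat is that for $p=2$ all of this must be read in the ordinary sense, per the paper's conventions. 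Your second layer --- reducing the four bullets, under Leopoldt, to the simultaneous vanishing of the three factors of \eqref{partial}, with ${\mathcal W}_{K,P}=1$ matched to the third bullet via Lemma \ref{leo}, ${\mathcal R}_{K,P}=1$ matched to the fourth via $\log_P(\ov {E\,}^P_{\!\!K})=\Z_p\log_P(E_K)$ and the fact that a submodule of a finitely generated $\Z_p$-module is a direct summand iff the quotient is torsion free, and $\wt{\Cl_K}^{P}=1$ matched to the second --- is also sound. On the step you rightly flag as the obstacle, two refinements: (a) the equality $\Z_p{\rm Log}_P^{}(P_{K,P})=\big(\log_P(U_{K,P})+\Q_p\log_P(E_K)\big)/\Q_p\log_P(E_K)$ follows not so much from reciprocity as from weak approximation (global elements prime to $P$ landing in $U_{K,P}$ are dense there, the Teichm\"uller decomposition giving the easy inclusion), plus a Nakayama argument to upgrade density to equality of $\Z_p$-spans; (b) the identification of the kernel of the induced surjection $\Cl_K\to\Z_p{\rm Log}_P^{}(I_{K,P})/\Z_p{\rm Log}_P^{}(P_{K,P})$ with $\wt{\Cl_K}^{P}$ is precisely the sentence the paper asserts just after \eqref{TLog}, so within the paper's framework you may simply quote it; proving it from scratch is the idelic computation you allude to, and that is where global class field theory genuinely enters. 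With (a) and (b) made explicit, your proof is complete.
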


\subsection{New formalisms and use of pro-$p$-group theory}\label{new}
\smallskip
\subsubsection{Infinitesimal arithmetic} \label{6}
\smallskip
From \cite{Ja0,Ja1,Ja11,Ja2}\,[Jau1984-1986-1994-1998].
At the same time, in his Thesis, Jaulent defines the {\it infinitesimal
arithmetic} in a number field proving, in a nice conceptual framework, 
generalizations of our previous results, especially in the new context of
{\it logarithmic classes} \cite{Ja11,Ja3}\,[Jau1994-2002], adding Iwasawa theory results,
study of the $p$-regularity (replacing ${\mathcal T}_{K,P}$ by
the tame kernel ${\rm K}_2(Z_K)$ of the ring of integers of $K$), 
and genus theory. 

\smallskip
The same technical context of $\ell\,(=p)$-adic class 
field theory and a logarithmic class field theory was developed later in 
much papers, including computational
methods of Bourbon--Jaulent \cite{BJ}\,[BJ2013]. 
He studies in \cite{Ja11}\,[Jau1994]
 the logarithmic class group $\wt \Cl_K$ (do not confuse with $\wt {\Cl_K}^P$)
whose finitness is equivalent to the Gross (or Gross--Kuz'min) conjecture 
\cite{FGS}\,[FGS1981], \cite{Kuz}\,[Kuz1972] (a survey is given in \cite[\S\,III.7]{Gr3}\,[Gr2003]);
see also some comments in \cite{Ja10,Ja35}\,[Jau1987-2017].

Some properties of capitulation of generalized ray class groups and of
$\wt \Cl_K$ are given in \cite{Ja33,Ja34,Ja55,Ja44}\,[Jau2016a-2016b-2018a-2018b].

\subsubsection{Pro-$p$-group theory version} \label{7}
\smallskip
Shortly after, at the end of the 1980's, in his thesis, Movahhedi
\cite{Mo1,Mo2}\,[Mov1988-1990] gives a wide study of the abelian $p$-ramification theory, 
using mainely the properties of the pro-$p$-group ${\mathcal G}_{K,S}$ and deduces again
most of the previous items, then he gives the main structural and cohomological 
properties of ${\mathcal G}_{K,P}$ and the classical characterization of the
triviality of ${\mathcal T}_{K,P}$.
He proposes for this to speak of ``$p$-rational fields'' \cite[Definitionn~1]{Mo2}\,[Mov1990], 
that is to say the number fields $K$ such that Leopoldt's conjecture holds for $p$ 
and ${\mathcal T}_{K,P} =1$ (cf. Theorem \ref{triviality}); this was
inspired by the fact that $\Q$ is (obviously) $p$-rational for all $p$. 
This vocabulary has been adopted by the arithmeticians. 

\smallskip
Then Movahhedi gives properties of $p$-rational extensions $L/K$ and
the reciprocal of our result characterizing the $p$-rationality in a $p$-extension 
$L/K$, in other words the ``going up'' of the $p$-rationality:

\begin{theorem}\cite[Th\'eor\`eme 3, \S\,3]{Mo1}\,[Mov1988]. Let $L/K$ be a $p$-extension
of number fields. The field $L$ is $p$-rational if and only if $K$ is
$p$-rational and the set $T$ of tame primes, ramified in $L/K$, is
$p$-primitive in $K$. Moreover, under these conditions, the extension 
$T(L)$ of $T$ to $L$ is $p$-primitive.
\end{theorem}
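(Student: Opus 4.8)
The plan is to base the torsion half of the equivalence on the fixed point formula of Theorem~\ref{fix}, and to isolate the propagation of the Leopoldt conjecture as the genuinely hard point. Write $G:={\rm Gal}(L/K)$, a finite $p$-group. First I would reduce the statement $\mathcal{T}_{L,P}=1$ to a computation of $G$-invariants: since $\mathcal{T}_{L,P}$ is a finite $p$-group acted on by the $p$-group $G$, the elementary fixed-point lemma gives $\mathcal{T}_{L,P}=1$ if and only if $\mathcal{T}_{L,P}^{G}=1$. Granting the Leopoldt conjecture for $L$, Theorem~\ref{fix} expresses $\#\mathcal{T}_{L,P}^{G}$ as $\#\mathcal{T}_{K,P}$ times the quotient of $\prod_{\mathfrak{l}\nmid p}e_{\mathfrak{l},p}$ by the index $(\sum_{\mathfrak{l}\nmid p}\frac{1}{e_{\mathfrak{l},p}}\Z_p{\rm Log}_P(\mathfrak{l})+\Z_p{\rm Log}_P(I_{K,P}):\Z_p{\rm Log}_P(I_{K,P}))$. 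This index is at most $\prod_{\mathfrak{l}\nmid p}e_{\mathfrak{l},p}$, so the whole quantity, being the order of a group, equals $1$ exactly when $\mathcal{T}_{K,P}=1$ and the index attains $\prod_{\mathfrak{l}\nmid p}e_{\mathfrak{l},p}$. The latter is condition (ii) of Corollary~\ref{TL=1}, that is, the definition of ``$T$ is $p$-primitive in $K$''. Hence, \emph{once Leopoldt holds for $L$}, one gets $\mathcal{T}_{L,P}=1 \iff \mathcal{T}_{K,P}=1$ and $T$ is $p$-primitive.

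It then remains to settle the Leopoldt conjecture itself. In the direction $\Rightarrow$ this is free: $p$-rationality of $L$ includes Leopoldt for $L$, which descends to the subfield $K$, and the equivalence above returns the two asserted conditions. In the direction $\Leftarrow$ I may assume Leopoldt only for $K$, so I must prove it for $L$ before Theorem~\ref{fix} can be invoked. The case $T=\es$ is the model and is clean: then $L\subseteq H_{K,P}$, whence $H_{L,P}=H_{K,P}$ and $\mathcal{G}_{L,P}={\rm Gal}(H_{K,P}/L)$ is an open subgroup of $\mathcal{G}_{K,P}$, which is free pro-$p$ on $r_2(K)+1$ generators by Theorem~\ref{triviality}. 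A subgroup of a free pro-$p$-group is free, and Schreier's index formula yields exactly $r_2(L)+1$ generators (using the ordinary-sense conventions when $p=2$), so Theorem~\ref{triviality} makes $L$ itself $p$-rational, Leopoldt included.

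The main obstacle is the truly tame case $T\neq\es$, where $L\not\subseteq H_{K,P}$ and the free-subgroup argument is unavailable. Here I would work inside the maximal $(P\cup T)$-ramified pro-$p$-extension $\mathcal{M}/K$, with $\mathcal{G}:={\rm Gal}(\mathcal{M}/K)$: one has $L\subseteq\mathcal{M}$, the open subgroup $\mathcal{H}:={\rm Gal}(\mathcal{M}/L)$ equals $\mathcal{G}_{L,P\cup T(L)}$, and $\mathcal{G}_{L,P}$ is the quotient of $\mathcal{H}$ by the normal closure of the tame inertia at $T(L)$. The hypothesis ``$K$ $p$-rational and $T$ $p$-primitive'' is precisely the assertion that the relation structure of $\mathcal{G}$ at the tame places is minimal, and the hard step is to convert this into the vanishing $H^{2}(\mathcal{G}_{L,P},\Q_p/\Z_p)=0$, which is equivalent to the Leopoldt conjecture for $L$; this is a cohomological comparison of the open subgroup $\mathcal{H}$ with the ambient group $\mathcal{G}$. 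Once it is established, the first paragraph applies and gives $\mathcal{T}_{L,P}=1$, so $L$ is $p$-rational.

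Finally, for the ``moreover'', $p$-rationality of $L$ identifies ${\rm Gal}(\wt{L}^P/L)$ with $\mathcal{A}_{L,P}\cong\Z_p^{r_2(L)+1}$. I would then verify that the Frobenius automorphisms of the places of $T(L)$ form a direct-sum generating family of ${\rm Gal}(\wt{L}^P/L)$ realizing the primitivity index, obtained by transporting the $p$-primitivity of $T$ through the restriction map ${\rm Gal}(\wt{K}^P/K)\to{\rm Gal}(\wt{L}^P/L)$ (functoriality of ${\rm Log}_P$) together with the freeness just secured, via a rank count. This propagation of primitivity is exactly what makes the statement usable for building towers of $p$-rational fields by iterated $p$-primitive tame ramification.
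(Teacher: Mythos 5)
Your first paragraph reproduces exactly the proof-content that the paper itself attaches to this statement: the Remark that follows it observes that, \emph{under a universal Leopoldt hypothesis}, the theorem is equivalent to combining the fixed-point formula of Theorem~\ref{fix} with Corollary~\ref{TL=1}. Your derivation of that conditional equivalence is correct: the elementary fixed-point lemma for a $p$-group acting on a finite $p$-group, plus the observation that the index in Theorem~\ref{fix} divides $\prod_{\mathfrak{l}\nmid p}e_{\mathfrak{l},p}$ (each $\frac{1}{e_{\mathfrak{l},p}}{\rm Log}_P^{}(\mathfrak{l})$ has order dividing $e_{\mathfrak{l},p}$ modulo $\Z_p{\rm Log}_P^{}(I_{K,P})$), forces both $\mathcal{T}_{K,P}=1$ and the primitivity condition. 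The direction $\Rightarrow$ (Leopoldt descends from $L$ to $K$) is fine, and your $T=\es$ case of $\Leftarrow$ is complete and correct: $\mathcal{G}_{L,P}$ is open in the free pro-$p$-group $\mathcal{G}_{K,P}$ of rank $r_2(K)+1$, and Schreier's formula with $r_2(L)=[L:K]\,r_2(K)$ gives freeness on $r_2(L)+1$ generators, whence $p$-rationality of $L$ by Theorem~\ref{triviality} (note only that $L$ lies in the maximal \emph{pro-$p$} $P$-ramified extension, the field fixed by the kernel of $\mathcal{G}_{K,P}$, not in the abelian field $H_{K,P}$ as you wrote).

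There is, however, a genuine gap at precisely the point which the paper identifies as Movahhedi's contribution: the theorem is the ``reciprocal'', i.e.\ the \emph{unconditional} going-up, in which Leopoldt's conjecture for $L$ is a conclusion, not a hypothesis. When $T\neq\es$ you must deduce ${\rm H}^2(\mathcal{G}_{L,P},\Q_p/\Z_p)=0$ from ``$K$ $p$-rational and $T$ $p$-primitive'', and your third paragraph does not do this; it only labels it ``the hard step'' and gestures at ``a cohomological comparison of the open subgroup $\mathcal{H}$ with the ambient group $\mathcal{G}$''. That gesture does not succeed as stated: $\mathcal{G}_{L,P}$ is not an open subgroup of anything whose cohomology you control, but the quotient of $\mathcal{H}=\mathcal{G}_{L,P\cup T(L)}$ by the closed normal subgroup generated by the tame inertia, and ${\rm H}^2$-vanishing does not pass to such quotients; one needs the five-term exact sequence of this quotient together with an explicit count of generators and relations of $\mathcal{G}_{K,P\cup T}$ (\v Safarevi\v c/Koch presentation theory), which is exactly where primitivity must be used and which is the substance of Movahhedi's proof. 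As written, your argument establishes only the conditional form (Leopoldt for $L$ assumed), plus the special case $T=\es$. The ``moreover'' ($T(L)$ is $p$-primitive in $L$) is likewise only a sketch: the map you invoke from ${\rm Gal}(\wt{K\,}^P\!/K)$ to ${\rm Gal}(\wt{L\,}^P\!/L)$ is not the restriction (restriction goes the other way; yours is the transfer coming from extension of ideals under ${\rm Log}_P^{}$), and no rank count is actually carried out.
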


This implies that if $K$ is $p$-rational and $T$ $p$-primitive, 
then any $T$-ramified $p$-extension $L/K$ fulfills the Leopoldt conjecture
and $T(L)$ is $p$-primitive (a particular case was given in
\cite[Th\'eor\`eme III.4]{Gr01}\,[Gr1982] for totally real fields).

\begin{remark} In practice, in research papers, one assumes in general 
an universal Leopoldt conjecture, so that the above statement becomes:

\centerline{\it $L$ is $p$-rational if and only if $K$ is $p$-rational and $T$
is $p$-primitive} 

\smallskip
\noindent
(equivalent to use the fixed point formula of Theorem \ref{fix} and Corollary \ref{TL=1}).
\end{remark}

In the 1990's, the classical results on $p$-ramification, $p$-rationality,
and $p$-regularity about the triviality of the tame kernel ${\rm K}_2 (Z_K)$,
are amply illustrated in various directions by Movahhedi, Nguyen Quang Do, 
Jaulent (see Movahhedi \cite{Mo2}\,[Mov1990], 
Movahhedi--Nguyen Quang Do {\cite{MN}\,[MN1990], Berger--Gras \cite{BG}\,[BG1992],
Jaulent--Nguyen Quang Do \cite{JN}\,[JN1993], Jaulent--Sauzet \cite{JS1}\,[JS1997],
and Jaulent \cite{Ja2}\,[Jau1998]): pro-$p$-group theory with explicit 
determination of a system of generators and relations for ${\mathcal G}_{K,S}$,
Galois cohomology, Iwasawa's theory, Leopoldt and Gross conjectures. 

\smallskip
Recall that in \cite[Scolie, p.\,112]{Ja10}\,[Jau1987] Jaulent shows that, when $\mu_p \subset K$ 
the nullity of the $p$-Hilbert kernel ${\rm H}_2(L) \otimes \Z_p$ implies Leopoldt and 
Gross conjectures. 
Moreover \cite{Ja2}\,[Jau1998] deals with ramification and decomposition.

\smallskip
Under the assumptions: $\mu_p \subset K$, ${\rm H}_2(L) \otimes \Z_p=0$, for the
Hilbert kernel, and the existence of ${\mathfrak p}_0 \in S$ such that 
$\mu_{K_{{\mathfrak p}_0}} = \mu_K$, some results in \cite{Ng3}\,[Nqd1991],
after \cite{Mo1}\,[Mov1988] and \cite{MN}\,[MN1990] 
on the primitive reciprocity laws, in the framework of $p$-rationality,
describe (by means of generators and relations) the Galois group 
${\mathcal G}_{K,S}$.

\subsubsection{Links between these invariants and Iwasawa's theory}\label{iwa}
\smallskip
Despite the fact that we limit ourselves to arithmetical invariants of the base field
(which is always possible), we give a short overview on the Iwasawa context
and we indicate the main references for the reader.\footnote{\,This Subsection,
describing the two different (but equivalent) techniques, is 
close to personal communications of Jean-Fran\c cois Jaulent and Thong Nguyen Quang Do 
(up to the notations and some comments). We thank them also for some remarks and corrections
about this subsection.}

\smallskip
The base field invariants concerned are (in the case $S=P$), the torsion group
${\mathcal T}_{K,P}$, the $p$-Hilbert kernel ${\rm H}_2(K) \otimes \Z_p$, and
the logarithmic class group $\wt \Cl_K$. 

\smallskip
Let $K_\infty := K(\mu_{p^\infty})$, $\Gamma := {\rm Gal}(K_\infty/K) =: 
\langle \gamma \rangle$, $X$ the Galois group of the maximal abelian
pro-$p$-extension of $K_\infty$, non-ramified and in which all places totally split.
For a field $k$, we put $\mu_{p^\infty}(k) = \mu_{p^\infty} \,\cap\, k^\times$.
For any module $M$ over the Iwasawa algebra, denote by $M(i)$ the $i$th twist
on which $\Gamma$ acts by $\gamma  \cdot m := \kappa^i(\gamma) \cdot m^\gamma$,
where $\kappa$ is the cyclotomic character.

\smallskip
Then the interpretation of the above invariants, in the Iwasawa framework 
is given, in part, by the following two results:

\begin{theorem}  \cite[Theorem 4.2]{Ng2}\,[Nqd1986]. Assuming the Leopoldt conjecture for $p$ in $K$, 
one has the following exact sequence
$1 \to \mu_{p^\infty}(K) \too \plus_{{\mathfrak p} \mid p} \mu_{p^\infty}(K_{\mathfrak p})
\too {\mathcal T}_{K,P} \too {\rm Hom}_\Gamma(X, \mu_{p^\infty}) \to 1$.
Then we have the following relation:
$${\rm Hom}_\Gamma(X, \mu_{p^\infty}) = {\rm Hom}_\Gamma (X(-1), \Q_p/\Z_p) 
 = {\rm Hom} (X(-1)_\Gamma, \Q_p/\Z_p) \simeq {\rm Gal} \big (H_K^{\rm bp} / \wt {K\,}^P \big)$$
 
\noindent
(see Remark \ref{rema1}), while (in relation with the paper of Federer--Gross--Sinnot
\cite{FGS}\,[FGS1981]):
\begin{equation}\label{X=C}
X_\Gamma \simeq  \wt \Cl_K.
\end{equation}
\end{theorem}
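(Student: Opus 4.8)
\emph{Plan.} My strategy is to read both assertions as the $\Gamma$-descent, along $K_\infty/K$, of the Poitou--Tate exact sequence for the Galois module $\mu_{p^\infty}$, with ramification restricted to $P$ (in the ordinary sense at the real places when $p=2$). First I would write the nine-term Poitou--Tate sequence for the finite module $\mu_{p^n}$, whose Cartier dual is $\Z/p^n$, relative to $S=P$, and pass to the direct limit over $n$. Its first four terms read
$$0 \too \mu_{p^\infty}(K) \too \plus_{{\mathfrak p}\mid p}\mu_{p^\infty}(K_{\mathfrak p}) \too {\rm H}^2({\mathcal G}_{K,P},\Z_p)^* \too \mathrm{Sha}^1 \too 0,$$
where $\mathrm{Sha}^1$ denotes the subgroup of ${\rm H}^1({\mathcal G}_{K,P},\mu_{p^\infty})$ of classes that become trivial at every ${\mathfrak p}\mid p$. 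The first two terms are immediate, and their cokernel is ${\mathcal W}_{K,P}=W_{K,P}/\iota_P^{}(\mu_K)$ by Lemma \ref{leo} (under Leopoldt). For the third term I would invoke the cohomological interpretation recalled in \S\ref{1bis}, namely ${\mathcal T}_{K,P}^*\simeq {\rm H}^2({\mathcal G}_{K,P},\Z_p)$; dualising the finite group then gives ${\rm H}^2({\mathcal G}_{K,P},\Z_p)^*\simeq{\mathcal T}_{K,P}$, and Leopoldt's conjecture is precisely what makes ${\rm H}^2$ finite and produces this identification. This already yields the stated four-term sequence once $\mathrm{Sha}^1$ is correctly named.

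The heart of the argument is the identification $\mathrm{Sha}^1\simeq{\rm Hom}_\Gamma(X,\mu_{p^\infty})$. Here I would use Kummer theory over $K_\infty=K(\mu_{p^\infty})$: a class in $\mathrm{Sha}^1$ is unramified outside $p$ and locally trivial at $p$, so after restriction to $K_\infty$ it defines a $\Gamma$-equivariant homomorphism into $\mu_{p^\infty}$ from the Galois group of the maximal abelian pro-$p$-extension of $K_\infty$ that is \emph{unramified and totally split everywhere}, that is, from $X$. The local triviality at $p$ is what forces complete splitting above $p$ in the limit, while ``unramified outside $p$'' on each finite layer becomes ``unramified everywhere'' over $K_\infty$. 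Showing that this restriction map is an isomorphism onto ${\rm Hom}_\Gamma(X,\mu_{p^\infty})$ is where Leopoldt (and the finiteness equivalent to Gross--Kuz'min for the split module) is used to annihilate the $\Gamma$-(co)homological error terms in the inflation--restriction spectral sequence.

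The remaining equalities are then formal. Since $\Gamma$ acts on $\mu_{p^\infty}=(\Q_p/\Z_p)(1)$ through $\kappa$, untwisting gives ${\rm Hom}_\Gamma(X,\mu_{p^\infty})={\rm Hom}_\Gamma(X(-1),\Q_p/\Z_p)$, and because $\Q_p/\Z_p$ carries the trivial action every such homomorphism factors through the coinvariants, whence ${\rm Hom}_\Gamma(X(-1),\Q_p/\Z_p)={\rm Hom}(X(-1)_\Gamma,\Q_p/\Z_p)$. To match this with ${\rm Gal}(H_K^{\rm bp}/\wt {K\,}^P)$ I would use Remark \ref{rema1}: the cokernel computation above shows the kernel of ${\mathcal T}_{K,P}\to\mathrm{Sha}^1$ is exactly ${\mathcal W}_{K,P}$, so ${\mathcal T}_{K,P}/{\mathcal W}_{K,P}$ is the Bertrandias--Payan module ${\rm Gal}(H_K^{\rm bp}/\wt {K\,}^P)$. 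Finally, for $X_\Gamma\simeq\wt \Cl_K$ I would run the companion descent: $X_\Gamma=X/(\gamma-1)X$ is the Galois group of the maximal sub-extension abelian over $K$, and translating the ``unramified and totally split'' condition over $K_\infty$ through the logarithmic valuations identifies it with the logarithmic class group, which is the Federer--Gross--Sinnott computation \cite{FGS}\,[FGS1981] refined by Jaulent \cite{Ja11}\,[Jau1994].

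The main obstacle I anticipate is the pair of descent steps $\mathrm{Sha}^1\simeq{\rm Hom}_\Gamma(X,\mu_{p^\infty})$ and $X_\Gamma\simeq\wt\Cl_K$: controlling the passage between the arithmetic over $K$ and the Iwasawa module over $K_\infty$ requires that the relevant $\Gamma$-cohomology vanish, or be finite and cancel, and this is exactly where Leopoldt's conjecture for $p$ in $K$ — and, for the split module, the finiteness equivalent to Gross--Kuz'min — must be fed in to guarantee exactness with no spurious terms.
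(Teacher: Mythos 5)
There is nothing to compare your argument against inside the paper: this statement occurs in the historical Appendix as a quotation of Nguyen Quang Do's result \cite[Theorem 4.2]{Ng2}\,[Nqd1986] (with \eqref{X=C} credited to Jaulent and to Federer--Gross--Sinnott), and no proof is given there. Judged on its own terms, your reconstruction follows the standard route, which is in substance that of the cited source: the Poitou--Tate sequence for $\mu_{p^n}$ relative to $S=P$, passage to the limit, the identification ${\rm H}^2({\mathcal G}_{K,P},\Z_p)^*\simeq{\mathcal T}_{K,P}$ under Leopoldt (\S\,\ref{1bis}), Iwasawa descent for the Sha-term, the formal untwist/coinvariants step, and the matching with ${\rm Gal}(H_K^{\rm bp}/\wt{K\,}^P)$ via Lemma \ref{leo} and Remark \ref{rema1}. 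The architecture is sound.

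Two points should nevertheless be corrected. First, the descent step needs no conjecture at all: for $p$ odd, $\gamma-1$ acts surjectively on the divisible module $\mu_{p^\infty}$ and $\Gamma$ has $p$-cohomological dimension $1$ on discrete torsion modules, so ${\rm H}^1(\Gamma,\mu_{p^\infty})={\rm H}^2(\Gamma,\mu_{p^\infty})=0$, and the same vanishing holds for the local extensions $K_{\infty,w}/K_{\mathfrak p}$; hence restriction identifies ${\rm H}^1({\mathcal G}_{K,P},\mu_{p^\infty})$ with ${\rm Hom}_\Gamma({\mathcal X}_\infty,\mu_{p^\infty})$, where ${\mathcal X}_\infty$ is the Galois group of the maximal abelian pro-$p$-extension of $K_\infty$ unramified outside $p$, and carries ${\rm Sha}^1$ isomorphically onto the subgroup of homomorphisms killing all decomposition groups above $p$ --- all of this unconditionally (only the usual care at $p=2$ is required, where ${\rm H}^1(\Gamma,\mu_{2^\infty})$ can have order $2$). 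Leopoldt enters exactly once, where you first put it, namely in ${\rm H}^2({\mathcal G}_{K,P},\Z_p)^*\simeq{\mathcal T}_{K,P}$; and Gross--Kuz'min is not needed for the isomorphism $X_\Gamma\simeq\wt\Cl_K$ itself, only for the finiteness of either side. Second, your justification for landing in ${\rm Hom}_\Gamma(X,\mu_{p^\infty})$ is garbled: the sentence ``unramified outside $p$ on each finite layer becomes unramified everywhere over $K_\infty$'' is false as stated. What is true is that killing the decomposition groups above $p$ produces an extension of $K_\infty$ unramified outside $p$ and totally split (hence unramified) at $p$, so unramified everywhere; to reach $X$ as defined in the paper --- totally split at \emph{all} places --- you must still observe that every place $v\nmid p$ splits completely in any unramified pro-$p$-extension of $K_\infty$, because its residue field already contains the $\Z_p$-extension of the residue field of $K$ and therefore admits no nontrivial pro-$p$-extension. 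Without that remark your map lands a priori in ${\rm Hom}_\Gamma(X',\mu_{p^\infty})$ for the ``split only above $p$'' module $X'$, and the equality $X'=X$ is precisely the point that has to be supplied.
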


The relation \eqref{X=C} is given in \cite{Ja1}\,[Jau1986], then in \cite{Ja100,Ja2}\,[Jau1990-1998].

\smallskip
The considerable advantage of $\wt \Cl_K$, introduced in \cite{Ja11}\,[Jau1994],
is that it only involves some specific and explicit notions of classes and units of the 
base field $K$ and is then likely to be numerically calculated 
(Belabas--Jaulent  \cite{BJ0}\,[BJ2016]).

\smallskip
When $i$ varies, similar results may be interpreted by means of 
higher ${\rm K}$-groups \cite{Ng33}\,[Nqd1992].
The main ${\rm K}_2$-theoretic interpretation is given as follows:

\begin{theorem}\cite[Theorem 5.6]{Ng2}\,[Nqd1986]. One has:
$({\rm H}_2(K)\otimes \Z_p)^* = {\rm Ker}^2_P(\Q_p/\Z_p(-1))$;
if $K$ contains $\mu_{p^e}$, $e \geq 1$, one obtains the perfect duality:
${\rm Gal} \big (H_K^{\rm bp} / \wt {K\,}^P \big)[p^e] \times 
\big ({\rm H}_2(K)/p^e {\rm H}_2(K) \big )(-1) \too \mu_{p^e}$, 
where $T[p^e] := \{x \in T, \ \, p^e \cdot x =0\}$ for a $\Z_p$-module $T$, and where
${\rm Ker}^2_P$ is the kernel of the localization homomorphism
${\rm H}_2({\mathcal G}_{K,P}) \too \plus_{{\mathfrak p} \mid p} 
{\rm H}_2({\mathcal G}_{K,{\mathfrak p}})$.
\end{theorem}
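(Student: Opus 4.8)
The plan is to move both sides into the Galois cohomology of ${\mathcal G}_{K,P}$ and then apply global (Poitou--Tate) duality, exploiting that, under Leopoldt's conjecture, ${\mathcal G}_{K,P}$ has $p$-cohomological dimension $2$ and is a duality group. First I would invoke Tate's comparison between $K$-theory and Galois cohomology: for each $n$ there is an isomorphism $K_2({\mathcal O}_{K,P})/p^n \simeq {\rm H}^2({\mathcal G}_{K,P},\mu_{p^n}^{\otimes 2})$, compatible with the localisation maps at the places ${\mathfrak p}\mid p$, under which ${\rm H}^2({\mathcal G}_{K,{\mathfrak p}},\mu_{p^n}^{\otimes 2}) \simeq K_2(K_{\mathfrak p})/p^n$. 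Passing to the limit in $n$ identifies the $p$-part ${\rm H}_2(K)\otimes\Z_p$ of the Hilbert kernel with the kernel of the localisation map in degree $2$ for the coefficients $\Z_p(2)$, i.e.\ with the finite group ${\rm Ker}^2_P(\Z_p(2))$.

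Next I would run Poitou--Tate duality on the finite layers $\Z/p^n(2)$. Its Cartier dual is ${\rm Hom}(\Z/p^n(2),\mu_{p^n})=\Z/p^n(-1)$, and global duality gives a perfect pairing between the Tate--Shafarevich groups of a module and of its Cartier dual. Taking Pontryagin duals and passing to the direct limit (so that the coefficients $\Z_p(2)$ are replaced by the divisible module $\Q_p/\Z_p(-1)$) produces the first assertion $({\rm H}_2(K)\otimes\Z_p)^{*}\simeq {\rm Ker}^2_P(\Q_p/\Z_p(-1))$. At this point one must verify that the surviving local conditions are exactly those at ${\mathfrak p}\mid p$: for $p$ odd the real places contribute nothing, while for $p=2$ the ordinary (non-complexified) convention removes their contribution, so the $S$-global $\mathrm{Sha}$-group is genuinely the ${\rm Ker}^2_P$ of the statement.

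For the perfect $p^e$-duality I would specialise the first isomorphism to $p^e$-torsion and untwist using $\mu_{p^e}\subset K$. On the one hand $({\rm H}_2(K)\otimes\Z_p)^{*}[p^e]\simeq {\rm Hom}\big({\rm H}_2(K)/p^e{\rm H}_2(K),\,\Z/p^e\big)$; on the other hand the preceding theorem of Nguyen Quang Do \cite{Ng2} identifies the $p^e$-torsion of ${\rm Ker}^2_P(\Q_p/\Z_p(-1))$ with ${\rm Gal}\big(H_K^{\rm bp}/\wt {K\,}^P\big)[p^e]$ through the isomorphism ${\rm Hom}_\Gamma(X,\mu_{p^\infty})\simeq {\rm Gal}\big(H_K^{\rm bp}/\wt {K\,}^P\big)$ recalled there and in Remark \ref{rema1}. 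Because $\mu_{p^e}\subset K$, the twist becomes an honest isomorphism $\Z/p^e(-1)\otimes\mu_{p^e}\simeq\Z/p^e$, which converts the $\Z/p^e$-valued duality into the $\mu_{p^e}$-valued cup-product pairing ${\rm Gal}\big(H_K^{\rm bp}/\wt {K\,}^P\big)[p^e]\times \big({\rm H}_2(K)/p^e{\rm H}_2(K)\big)(-1)\to \mu_{p^e}$, the second factor being evaluated in ${\rm H}^2({\mathcal G}_{K,P},\mu_{p^e})$ against the fundamental class of the duality group.

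The hard part will be the cohomological bookkeeping in the first two steps: fixing Tate's isomorphism in a form compatible with every local restriction at ${\mathfrak p}\mid p$, and then tracking the Tate twist (the $(2)$ becoming $(-1)$) together with the passage from finite coefficients $\Z/p^n(2)$ to the divisible coefficients $\Q_p/\Z_p(-1)$, so that the Pontryagin dual of the Hilbert kernel is matched with ${\rm Ker}^2_P$ in the correct degree rather than a neighbouring one. This degree- and twist-matching is precisely where the duality-group structure of ${\mathcal G}_{K,P}$ under Leopoldt's conjecture, and the ordinary-sense conventions at the real places for $p=2$, are indispensable.
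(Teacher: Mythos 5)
The paper itself contains no proof of this statement: it is quoted, in the historical Appendix, from Nguyen Quang Do [Nqd1986, Th\'eor\`eme 5.6], so your attempt can only be judged against the statement and the surrounding literature, not against an argument in the text. Judged that way, there is a genuine gap, and it sits exactly where you yourself flag ``the hard part''. Your first step --- identifying ${\rm H}_2(K)\otimes\Z_p$ with the localization kernel ${\rm Ker}^2_P(\Z_p(2))$ inside ${\rm H}^2({\mathcal G}_{K,P},\Z_p(2))$ via Tate's comparison --- is the standard identification and is fine. But Poitou--Tate duality pairs the degree-$i$ localization kernel of a finite module with the degree-$(3-i)$ localization kernel of its Cartier dual: ${\rm Ker}^2_P(\Z/p^n(2))$ is dual to ${\rm Ker}^1_P(\Z/p^n(-1))$, never to a degree-$2$ kernel. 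Passing to the limit, your two steps therefore yield $({\rm H}_2(K)\otimes\Z_p)^* \simeq {\rm Ker}^1_P(\Q_p/\Z_p(-1))$, \emph{not} the asserted ${\rm Ker}^2_P(\Q_p/\Z_p(-1))$. The missing bridge --- in effect an isomorphism between ${\rm Ker}^1_P$ and ${\rm Ker}^2_P$ for these coefficients, equivalently between ${\rm Ker}^1_P(\Z_p(2))$ and ${\rm Ker}^2_P(\Z_p(2))$ on the dual side --- is not a formal consequence of global duality, nor of any duality-group property of ${\mathcal G}_{K,P}$ (Poitou--Tate holds unconditionally and always pairs complementary degrees, so invoking Leopoldt there buys nothing). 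That bridge is substantive arithmetic; it is essentially the content of Nguyen Quang Do's theorem, which is proved by descent along $K_\infty=K(\mu_{p^\infty})$: Hochschild--Serre with ${\rm cd}_p\leq 2$, the Iwasawa module $X$ appearing in his Theorem 4.2, and Kummer duality over $K_\infty$.

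The same gap undermines your treatment of the second assertion. You appeal to ``the preceding theorem'' to identify ${\rm Ker}^2_P(\Q_p/\Z_p(-1))[p^e]$ with ${\rm Gal}\big(H_K^{\rm bp}/\wt{K}^P\big)[p^e]$, but that theorem, as quoted in the survey, only gives ${\rm Gal}\big(H_K^{\rm bp}/\wt{K}^P\big)\simeq {\rm Hom}_\Gamma(X,\mu_{p^\infty})$; it says nothing about localization kernels in ${\rm H}^2$. Producing that identification is again precisely the substance of Theorem 5.6, so as written your derivation of the $\mu_{p^e}$-valued pairing is circular. The untwisting step (using $\mu_{p^e}\subset K$ to trivialize the modules $\Z/p^e(i)$ at level $p^e$) is correct once the two sides have been matched, but the matching itself is what has to be proved, and the Poitou--Tate formalism alone cannot supply it.
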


This result of duality does appear in \cite{Ja1,Ja100}\,[Jau1986-1990].
If $\mu_p \subset K$, the nullity of ${\rm H}_2(K)\otimes \Z_p$ is equivalent to that of
$\wt \Cl_K$, which makes the link with the above Scolie 
\cite[Scolie, p.\,112]{Ja10}\,[Jau1987] of Jaulent.
For relations between logarithmic classes and higher ${\rm K}$-groups, mention 
the work of Jaulent--Michel \cite{JM}\,[JM2006] and that of Hutchinson \cite{Hut}\,[Hut2017].

\subsubsection{Synthesis $2003$--$2005$} \label{8}
\smallskip
Because our Crelle papers, were written in french, whence
largely ignored, all the results and consequences, that we have given in 
\cite{Gr00,Gr01,Gr02,Gr03,Gr0,Gr1,Gr2}\,[Gr1977-1982-1983-1984-1985-1986-1998], 
were widely developed and improved
in \cite{Gr3}\,[Gr2003] where a systematic and general use of ramification 
and decomposition is considered, the infinite places playing 
a specific role (decomposition or complexification). 

\smallskip
Furthermore, \cite[Theorem V.2.4 and Corollary V.2.4.2]{Gr3}\,[Gr2003]
give a characterisation (with explicit governing fields) of the existence 
of degree $p$ cyclic extensions of $K$ with given ramification and 
decomposition. This criteria has been used by Hajir--Maire
and Hajir--Maire--Ramakrishna in several of their papers for results 
on $S$-ramified pro-$p$-groups (see, e.g., \cite[Theorem 5.3]{HMR1}\,[HMR2019a],
\cite[Remark 2.2.]{HMR2}\,[HMR2019b] for the most recent publications).

\subsection{Present theoretical and algorithmic aspects} 
\smallskip
One may say that there is no important progress for $p$-rationality,
for itself, since $p$-rational fields are in some sense the ``simplest fields''
in a $p$-adic sense, 
but that the significance of the $p$-adic properties of the groups 
${\mathcal T}_{K,S}$, in much domains of number theory, has given a great lot
of heuristics, conjectures, computations; so we shall now describe
some of these aspects with some illustrations (it is not possible to be comprehensive
since the concerned literature becomes enormous).

\subsubsection{Absolute abelian Galois group $A_K$ of $K$} \label{9}  
\smallskip
Let $K^{\rm ab}$ be the maximal abelian pro-extension of $K$.
In \cite{AS}\,[AS2013] Angelakis--Stevenhagen, after some 
work by Kubota \cite{Kub}\,[Kub1957] and Onabe \cite{On}\,[Ona1976], provide a direct 
computation of the {\it profinite group} $A_K := {\rm Gal}(K^{\rm ab}/K)$ for imaginary quadratic 
fields $K$, and use it to obtain many 
different $K$ that all have the same ``minimal'' absolute abelian Galois group, which
is in some sense a condition of minimality of the groups ${\mathcal T}_{K,P}$
for all primes $p$. They obtain for instance, among other results and numerical 
illustrations:

\begin{theorem}  \cite[Theorem 4.1 $\&$ Section 7]{AS}\,[AS2013].
An imaginary quadratic field $K \ne \Q(i)$, 
$\Q(\sqrt{-2})$ of class number $1$ has absolute abelian Galois group 
isomorphic to $\widehat \Z^2 \times \prod_{n \geq 1}\Z/n\Z$.
\end{theorem}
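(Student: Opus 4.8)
The plan is to compute $A_K = {\rm Gal}(K^{\rm ab}/K)$ directly by class field theory and then to recognise the resulting profinite abelian group through its local (Ulm) invariants. Writing $\mathbb{A}_K$ for the adèle ring and $C_K = \mathbb{A}_K^\times/K^\times$ for the idèle class group, the Artin reciprocity map identifies $A_K$ with $C_K$ modulo the closure of its connected component. Since $K$ is imaginary quadratic it has a single infinite place, which is complex, so that connected component is the image of $K_\infty^\times=\C^\times$; hence $A_K \simeq \mathbb{A}_K^\times/\overline{K^\times\,\C^\times}$. As $\C^\times$ is exactly the kernel of the projection onto the finite idèles and is entirely contained in the subgroup we divide out, projecting away the archimedean factor gives the clean description $A_K \simeq \mathbb{A}_{K,f}^\times/\overline{K^\times}$, the finite idèles modulo the closure of the diagonal image of $K^\times$.

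First I would use that $K$ has class number $1$. Then every finite idèle is a product of a principal idèle and a unit idèle, that is $\mathbb{A}_{K,f}^\times = K^\times\cdot U_K$ with $U_K := \prd_{\mathfrak p}\mathcal{O}_{K_{\mathfrak p}}^\times$ (product over the finite places). Because $U_K$ is open and $K^\times\cap U_K = \mathcal{O}_K^\times = \mu_K$ (the Dirichlet unit rank $r_1+r_2-1$ of an imaginary quadratic field is $0$, so the global units are just the roots of unity), a short topological argument gives $\overline{K^\times}\cap U_K = \overline{\mu_K}=\mu_K$. Hence the surjection $U_K\to \mathbb{A}_{K,f}^\times/\overline{K^\times}$ has kernel $\mu_K$, and $A_K \simeq U_K/\mu_K$ with $\mu_K$ embedded diagonally.

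Next I would split off the torsion-free part of $U_K$. Locally $\mathcal{O}_{K_{\mathfrak p}}^\times \simeq \Z_p^{[K_{\mathfrak p}:\Q_p]}\times \mu(K_{\mathfrak p})$, and summing the local degrees over ${\mathfrak p}\mid p$ yields $[K:\Q]=2$ for every $p$; thus $U_K \simeq \widehat{\Z}^{\,2}\times T_K$, where $T_K := \prd_{\mathfrak p}\mu(K_{\mathfrak p})$ is a countable product of finite cyclic groups. Since $\mu_K$ is finite while $\widehat{\Z}^{\,2}$ is torsion-free, the diagonal copy of $\mu_K$ lands in $T_K$, so that $A_K \simeq \widehat{\Z}^{\,2}\times (T_K/\mu_K)$. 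It remains to prove the key isomorphism $T_K/\mu_K \simeq \prd_{n\geq 1}\Z/n\Z$, for which I would argue one rational prime $\ell$ at a time via the Kubota--Kaplansky Ulm-invariant formalism recalled in the Appendix. For ${\mathfrak p}\nmid\ell$ the $\ell$-part of $\mu(K_{\mathfrak p})$ is cyclic of order $\ell^{v_\ell(N{\mathfrak p}-1)}$, while the finitely many ${\mathfrak p}\mid\ell$ contribute only a bounded $\ell$-group; hence the $k$-th invariant is, up to a finite correction, $\#\{{\mathfrak p}: v_\ell(N{\mathfrak p}-1)=k\}$, and the target group $\prd_{n\geq1}\Z/n\Z$ is characterised by having every such invariant equal to $\aleph_0$.

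The decisive and most delicate step is to show that these invariants all equal $\aleph_0$, and this is where the exclusions appear. Both conditions "$N{\mathfrak p}\equiv 1\ (\mathrm{mod}\ \ell^{k})$ but not $\ell^{k+1}$'' and "${\mathfrak p}$ splits in $K$'' are governed by the Frobenius in $K(\mu_{\ell^{k+1}})/\Q$, so a Chebotarev argument produces infinitely many degree-one primes realising any prescribed invariant \emph{provided} the two conditions are independent. Independence fails exactly when $K\subseteq\Q(\mu_{2^m})$ for some $m$, i.e. at $\ell=2$ for the imaginary quadratic subfields of a $2$-power cyclotomic field, which are precisely $\Q(i)\subset\Q(\mu_4)$ and $\Q(\sqrt{-2})\subset\Q(\mu_8)$; there the coupling of the splitting and congruence conditions forces a low-level $2$-adic invariant below $\aleph_0$ (one checks the vanishing at level $k=1$ for $\Q(i)$ and at level $k=2$ for $\Q(\sqrt{-2})$), so the minimal profile is not attained. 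For every other class-number-one field the conditions are independent, Chebotarev yields all Ulm invariants equal to $\aleph_0$, and the bounded corrections from the primes above each $\ell$ together with the removal of the finite group $\mu_K$ do not disturb any of them. The main obstacle is thus this uniform $2$-adic book-keeping; once it is settled, the uniqueness theorem for products of finite cyclic groups with prescribed Ulm invariants gives $T_K/\mu_K\simeq\prd_{n\geq1}\Z/n\Z$, and hence $A_K\simeq\widehat{\Z}^{\,2}\times\prd_{n\geq1}\Z/n\Z$ for all class-number-one imaginary quadratic fields other than $\Q(i)$ and $\Q(\sqrt{-2})$.
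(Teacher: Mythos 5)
The paper itself contains no proof of this statement: it is quoted from Angelakis--Stevenhagen [AS2013, Theorem 4.1 and Section 7]. Your route is in fact the same as the cited one --- Artin reciprocity, the class-number-one identity $\mathbb{A}_{K,f}^\times=K^\times U_K$, the identification $A_K\simeq U_K/\mu_K$, the local splitting $U_K\simeq\widehat\Z^2\times T_K$ with $T_K=\prod_{\mathfrak p}\mu(K_{\mathfrak p})$, and an Ulm-invariant/Chebotarev analysis --- and everything up to and including $A_K\simeq\widehat\Z^2\times\bigl(T_K/\mu_K\bigr)$ is correct. The final step, however, contains one false claim and one genuine gap.

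The false claim: ``independence fails exactly when $K\subseteq\Q(\mu_{2^m})$, i.e.\ only for $\Q(i)$ and $\Q(\sqrt{-2})$.'' The fields covered by the theorem are exactly $K=\Q(\sqrt{-q})$ for $q\in\{3,7,11,19,43,67,163\}$, all primes $q\equiv 3\pmod 4$; each of them lies in $\Q(\mu_q)$, so at $\ell=q$ the splitting condition and the congruence condition are dependent for \emph{every} field to which the theorem applies. The conclusion survives because this dependence is harmless: for $k\geq 1$ the congruence $p\equiv 1\pmod{q^k}$ already forces $p$ to split in $K$, so the required Chebotarev classes are nonempty. But that is an argument you must supply; as written, your dichotomy (``for every other class-number-one field the conditions are independent'') is wrong in every case, even though the invariants it is meant to compute do come out equal to $\aleph_0$.

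The genuine gap is the passage from $T_K$ to $T_K/\mu_K$. You assert that removing the finite diagonal subgroup $\mu_K$ ``does not disturb'' the invariants and then apply the uniqueness theorem for products of finite cyclic groups to $T_K/\mu_K$. That theorem applies only to groups already known to be products of cyclic groups, and the quotient of an infinite product by a finite diagonal subgroup need not preserve the invariants: with $\Delta\simeq\Z/2\Z$ generated by $(2,2,2,\dots)$ one has $\bigl(\prod_{i\geq1}\Z/4\Z\bigr)/\Delta\simeq\Z/2\Z\times\prod_{i\geq2}\Z/4\Z$, so the level-one invariant jumps from $0$ to $1$ and the isomorphism type changes. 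This configuration (all local factors of $2$-power order at least $4$) is precisely the one occurring at $\ell=2$ for $\Q(i)$, so it is not a pathology external to the problem. What is missing is the splitting lemma that [AS2013] proves: choose a split prime $\mathfrak q$ with $v_\ell({\rm N}{\mathfrak q}-1)=v_\ell(w)$ for every prime $\ell$ dividing the order $w$ of $\mu_K$ (such $\mathfrak q$ exists, by the same Chebotarev reasoning, for every imaginary quadratic field); then $\mu_K$ is a direct factor of the cyclic group $\mu(K_{\mathfrak q})$, the retraction $\mu(K_{\mathfrak q})\to\mu_K$ extends the partial diagonal map and yields an automorphism of the product carrying the diagonal copy of $\mu_K$ onto a direct factor, whence $T_K/\mu_K\simeq\bigl(\mu(K_{\mathfrak q})/\mu_K\bigr)\times\prod_{\mathfrak p\neq\mathfrak q}\mu(K_{\mathfrak p})$ is again a product of finite cyclic groups. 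Only after this reduction does your invariant count --- including your correct identification of the exceptional levels, $k=1$ for $\Q(i)$ and $k=2$ for $\Q(\sqrt{-2})$ --- legitimately finish the proof.
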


This corresponds to the fact that such fields are $p$-rational for all $p$
(up to the factors ${\mathcal W}_{K,P}$ for $p=2,3$).
Then the generalization to an arbitrary $K$ involves
the ${\mathcal T}_{K,P}$ for all primes $p$, giving:

\begin{theorem} \cite[Theorem 2.1  $\&$ Corollary 2.1]{Gr4}\,[Gr2014].
Let  $K^{\rm ab}$ be the maximal Abelian pro-extension of $K$. 
Let ${\mathcal H}_K$ be the compositum, over $p$, of the maximal
$P$-ramified Abelian pro-$p$-extensions $H_{K,P}$ of $K$.
Under the Leopoldt conjecture, there exists an Abelian extension $L_K$ of $K$
such that ${\rm Gal}(L_K/K) \,\simeq\,\prod_p {\mathcal T}_{K,P}$  and 
such that ${\mathcal H}_K$ is the direct compositum over $K$ of $L_K$ 
and the maximal $\widehat \Z$-extension of $K$, and such that
we have the non-canonical isomorphism (for some explicit integers $\delta$ and $w$):
$${\rm Gal} \big (K^{\rm ab}/L_K \big )  =  \widehat \Z^{r_2+1} \! \times\!
{\rm Gal} \big (K^{\rm ab} /{\mathcal H}_K \big ) 
\simeq \widehat \Z^{r_2+1} \! \times\! 
\prd_{n \geq 1}\!\! \Big((\Z/2\,\Z)^\delta \!\times\! \Z/w \, n \,\Z\Big).$$
\end{theorem}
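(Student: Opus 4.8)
The plan is to compute $A_K := {\rm Gal}(K^{\rm ab}/K)$ from its global class field theory description and to separate three kinds of contribution: the free $\widehat{\Z}$-part, the $P$-ramified torsion, and the tame part. Since $A_K$ is a profinite abelian group it is the product of its pro-$p$-Sylow subgroups, $A_K = \prd_p A_K[p^\infty]$, and $A_K[p^\infty] = {\rm Gal}(K^{(p)}/K)$ where $K^{(p)}$ is the maximal abelian pro-$p$-extension of $K$. First I would observe that $H_{K,P} \subseteq K^{(p)}$, with ${\rm Gal}(H_{K,P}/K) = {\mathcal A}_{K,P}$, and that the remaining piece $T_p := {\rm Gal}(K^{(p)}/H_{K,P})$ is exactly the tame pro-$p$-part coming from inertia at the primes ${\mathfrak l}\nmid p$. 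Under the Leopoldt conjecture, \eqref{AT} gives the internal decomposition ${\mathcal A}_{K,P} = \Gamma_{K,P} \plus {\mathcal T}_{K,P}$ with $\Gamma_{K,P} \simeq \Z_p^{r_2+1}$.

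Next I would split off the free part. The summand $\Gamma_{K,P} \simeq \Z_p^{r_2+1}$ is a free (hence projective) $\Z_p$-module, so it lifts to a direct factor of $A_K[p^\infty]$; collecting over $p$ identifies the maximal $\widehat{\Z}$-extension $\wt K$ (the compositum over $p$ of the $\wt {K\,}^P$) with ${\rm Gal}(\wt K/K) \simeq \widehat{\Z}^{r_2+1}$, which is a projective profinite abelian group and therefore splits off $A_K$ as a direct factor. Defining $L_K$ by ${\rm Gal}(L_K/K) \simeq \prd_p {\mathcal T}_{K,P}$ as the torsion part inside ${\mathcal H}_K = \prd_p H_{K,P}$, the internal direct sum ${\mathcal A}_{K,P} = \Gamma_{K,P} \plus {\mathcal T}_{K,P}$ shows that ${\mathcal H}_K$ is the direct compositum of $\wt K$ and $L_K$. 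Feeding this into the tower $K \subseteq L_K \subseteq {\mathcal H}_K \subseteq K^{\rm ab}$ yields the first displayed isomorphism ${\rm Gal}(K^{\rm ab}/L_K) = \widehat{\Z}^{r_2+1} \times {\rm Gal}(K^{\rm ab}/{\mathcal H}_K)$, the splitting again coming from projectivity of $\widehat{\Z}^{r_2+1}$.

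It then remains to identify the tame quotient ${\rm Gal}(K^{\rm ab}/{\mathcal H}_K) = \prd_p T_p$. By class field theory this group is the image in $A_K$ of the product of tame inertia groups $\prd_{\mathfrak l} F_{\mathfrak l}^\times$ over the finite primes, modulo the closure of the diagonal image of the global units $E_K$ and $\mu_K$ together with the connected-component contribution of the archimedean places; since each $F_{\mathfrak l}^\times$ is finite cyclic of order ${\rm N}{\mathfrak l}-1$, this is a product of finite cyclic groups up to a finitely generated distortion. I would then invoke the classification of such profinite abelian groups by their Ulm--Kaplansky invariants (the Kubota formalism): two products of finite cyclic groups are isomorphic exactly when, for every prime $p$ and every height $k$, they share the same (possibly infinite) invariant $\dim_{\F_p}\big(p^k G[p]/p^{k+1}G[p]\big)$.

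The main obstacle is this last identification, and I expect it to break into a ``density'' input and a ``finite correction'' bookkeeping. For the density input I would apply Chebotarev, equivalently Dirichlet in the cyclotomic tower $K(\mu_{p^{k+1}})$, to produce, for each $p$ and $k$, infinitely many primes ${\mathfrak l}$ with $p^k \,\|\, {\rm N}{\mathfrak l}-1$; this forces every Ulm--Kaplansky invariant to be infinite, pinning down the universal shape $\prd_{n\geq 1}\Z/n\Z$. The delicate part is the finite correction: the diagonal roots of unity $\mu_K$, of order governed by $w$, get absorbed and rescale the cyclic factors to $\Z/w\,n\,\Z$, while at the prime $2$ the interplay of the real places and the $2$-torsion of $\mu_K$ with the connected component produces the extra factor $(\Z/2\,\Z)^\delta$. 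Making the constants $w$ and $\delta$ explicit, and checking that the global units and the $p$-class group perturb the invariants only in these finitely many controlled ways, is where the real work lies.
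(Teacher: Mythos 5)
The paper itself states this theorem as a quotation from \cite{Gr4}\,[Gr2014] (in the spirit of Angelakis--Stevenhagen \cite{AS}\,[AS2013]) and reproduces no proof, so your attempt has to be measured against the proof in that reference. Your skeleton does match it: decomposition of $A_K$ into pro-$p$ components, the splitting ${\mathcal A}_{K,P}=\Gamma_{K,P}\oplus{\mathcal T}_{K,P}$ of \eqref{AT} under Leopoldt, the definition of $L_K$ from a choice of such splittings, the extraction of the $\widehat\Z^{\,r_2+1}$ factor by projectivity of free $\Z_p$-modules (these splitting arguments are correct as you state them), the idelic identification of ${\rm Gal}(K^{\rm ab}/{\mathcal H}_K)$ with the tame inertia product $\prod_{\mathfrak l}F_{\mathfrak l}^\times$ modulo the closure of the image of the global units, and the Kubota--Ulm classification by invariants.

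The genuine gap is at exactly the step you defer, and your plan for it would not work as described. First, the Chebotarev input you propose --- infinitely many $\mathfrak l$ with $p^k\,\|\,{\rm N}\mathfrak l-1$, found in the cyclotomic tower --- computes the Ulm invariants of $\prod_{\mathfrak l}F_{\mathfrak l}^\times$ itself, not of the group actually at issue, which is this product modulo the closure $\overline E$ of the unit image. An Ulm invariant of the quotient is read off from finite-order characters of $\prod_{\mathfrak l}F_{\mathfrak l}^\times$ that \emph{vanish on} $\overline E$, and a character attached to a prime produced by a purely cyclotomic splitting condition has no reason to kill the units; one needs primes splitting in the Kummer extensions $K(\mu_{p^k},E_K^{1/p^k})$, i.e.\ Chebotarev applied to radicals of units, not only to roots of unity. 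Second, calling the remaining work a ``finite correction'' is a mischaracterization: the theorem is for an arbitrary $K$, where $E_K$ has rank $r_1+r_2-1>0$, so $\overline E$ is a $\widehat\Z$-module of positive rank, not a finite group. One must show that $E_K\otimes\widehat\Z$ injects into $\prod_{\mathfrak l}F_{\mathfrak l}^\times$ (a Grunwald--Wang-type statement), use that the Pontryagin dual of the quotient is a subgroup of a direct sum of finite cyclic groups and hence (Kulikov) again such a direct sum --- so that the classification by invariants applies at all --- and then carry out the count, whose delicate part is precisely the vanishing of the low-level invariants encoded by $w$ and the exceptional $2$-adic factor $(\Z/2\Z)^\delta$ (this is why Angelakis--Stevenhagen must exclude $\Q(i)$ and $\Q(\sqrt{-2})$). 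Two smaller symptoms of the same issue: $w$ does not arise by ``absorbing'' the diagonal $\mu_K$ but reflects the congruences ${\rm N}\mathfrak l\equiv 1\pmod w$ for $\mathfrak l\nmid w$; and the $p$-class groups cannot ``perturb'' this quotient at all, since the Hilbert class field lies inside ${\mathcal H}_K$. As written, your proposal establishes the two splitting assertions but not the displayed isomorphism, which is the substance of the theorem.
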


Angelakis--Stevenhagen conjecture in \cite[Conjecture 7.1]{AS}\,[AS2013]
the infiniteness of imaginary quadratic fields $K$ such that 
$A_K \simeq \widehat \Z^2 \times \prod_{n \geq 1}\Z/n\Z$. 

\smallskip
Note that when the $p$-class group of $K$ is non-trivial, $K$ is $p$-rational
if and only if $\Cl_K$ is cyclic and the $p$-Hilbert class field $H_K$ is contained
in $\wt {K\,}^P$ (assuming ${\mathcal W}_{K,P}=1$).

\smallskip
Whence the importance of fields $K$ being $p$-rational for all $p$
(or more precisely such that ${\mathcal T}_{K,P} = {\mathcal W}_{K,P}$
for all $p$); it is an easier 
problem only for $\Q$ and imaginary quadratic fields,
but dreadfully difficult when $K$ contains units of infinite order since
it is an analogous question as for Fermat's quotients of algebraic 
numbers (various heuristics and conjectures in \cite{Gr5}\,[Gr2016)), or values 
of $L$-functions which intervenne as in Coates--Li
\cite{CL,CL2}\,[CL2018-2019], Goren \cite{Go}\,[Gor2001], and more or less, 
in many papers as Boeckle--Guiraud--Kalyanswamy--Khare
\cite{BGKK}\,[BGKK2018] when the normalized $p$-adic regulator is a unit.
We have conjectured that, in any given number field $K$,
${\mathcal T}_{K,P} = 1$ for almost all $p$.

\subsubsection{Greenberg's conjecture on Iwasawa's $\lambda$, $\mu$} 
\smallskip
For a totally real number field $K$, consider (under the Leopoldt conjecture)
the cyclotomic $\Z_p$-extension $K^{\rm c}$ of $K$. Then Greenberg has
conjectured in \cite{Gre1}\,[Gre1976]
that the Iwasawa's invariants $\lambda$ and $\mu$ are zero. 

\smallskip
Equivalent formulations of this conjecture have been given,
as in \cite{Ng6}\,[Nqd2018] for an encompassing approach covering the necessary 
and sufficient conditions considered by Greenberg in two particular 
cases (we give up to provide a complete bibliography), but we must
mention that the two invariants ${\mathcal T}_{K,P}$ and
$\wt \Cl_K$ (the logarithmic class  group of Jaulent) are in some
sense ``governing invariants'' for the Greenberg conjecture 
(in a theoretical and numerical viewpoint)
and explain the $p$-adic obstuctions for a standard proof in the framework of 
Iwasawa's theory; for instance, as soon as ${\mathcal T}_{K,P}=1$ or 
$\wt {\Cl}_K=1$, Greenberg's conjecture is true for trivial reasons. For this, see 
\cite[Th\'eor\`emes 3.4, 4.8, 6.3]{Gr6}\,[Gr2017a] and \cite{Gr61}\,[Gr2018b]
about ${\mathcal T}_{K,P}$,
then the interpretation by Jaulent with the group of universal norms \cite{Ja6}\,[Jau2019b]
and the following criterion (under the Gross-Leopoldt conjecture):

\begin{theorem}\cite[Th\'eor\`eme 7, \S\,1.4]{Ja4}\,[Jau2018a]. The totally real
number field $K$ fulfills the conjecture of Greenberg if and only if its 
logarithmic class group $\wt \Cl_K$ capitulates in the cyclotomic $\Z_p$-extension 
$K^{\rm c}$ of $K$.
\end{theorem}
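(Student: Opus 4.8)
The plan is to recast the equivalence as a statement purely about the cyclotomic Iwasawa module attached to logarithmic classes, and then to prove that finiteness of this module is detected by capitulation. Write $K^{\rm c} = \bigcup_n K_n$ for the cyclotomic $\Z_p$-extension, $\Gamma := {\rm Gal}(K^{\rm c}/K) = \langle \gamma\rangle \simeq \Z_p$, and $\Lambda := \Z_p[[\Gamma]] \simeq \Z_p[[T]]$ with $T = \gamma - 1$. Put $\omega_n := \gamma^{p^n} - 1$ and $\nu_n := \omega_n/\omega_0 \in \Lambda$, and let $\wt X := \varprojlim_n \wt\Cl_{K_n}$ be the projective limit of the logarithmic class groups along the norm maps. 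First I would record the three arithmetic inputs supplied by logarithmic class field theory and the cited Iwasawa dictionary: (a) under the Gross--Leopoldt hypothesis each $\wt\Cl_{K_n}$ is finite and there are canonical codescent isomorphisms $\wt X/\omega_n\wt X \simeq \wt\Cl_{K_n}$, so $\wt X$ is a finitely generated torsion $\Lambda$-module; (b) through these identifications the extension-of-classes map $\wt\Cl_K \to \wt\Cl_{K_n}$ becomes multiplication by $\nu_n \colon \wt X/\omega_0\wt X \to \wt X/\omega_n\wt X$; and (c) Greenberg's conjecture for $K$ is equivalent to the finiteness of $\wt X$. Granting these, capitulation of $\wt\Cl_K$ in $K^{\rm c}$ means exactly that $\nu_n\wt X \subseteq \omega_n\wt X$ for some $n$, and the theorem reduces to the module-theoretic equivalence \emph{$\wt X$ finite $\iff$ $\nu_n$ vanishes for $n \gg 0$}.

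For the forward implication, if $\wt X$ is finite then $\Gamma$ acts through a finite quotient, so $\omega_m\wt X = 0$ for some $m$; for $n \geq m$ one has $\nu_n = p^{n-m}\,\nu_m$ on $\wt X$, and since $p^{n-m}$ annihilates the finite $p$-group $\wt X$ for $n$ large, $\nu_n\wt X = 0$, giving capitulation. For the converse I would argue as follows. Suppose $\nu_n\wt X \subseteq \omega_n\wt X$. Since $\omega_n\wt X = \nu_n\omega_0\wt X \subseteq \nu_n\wt X$, this forces $\nu_n\wt X = \omega_n\wt X = T\cdot(\nu_n\wt X)$. Setting $Z := \nu_n\wt X$, a finitely generated $\Lambda$-submodule, we get $Z = TZ$; reducing modulo $p$ and applying Nakayama over the local ring $\F_p[[T]]$ yields $Z = pZ$, hence $Z = (p,T)Z$, and topological Nakayama for compact $\Lambda$-modules forces $Z = 0$. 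Therefore $\omega_n\wt X = 0$, and the codescent isomorphism of (a) gives $\wt X \simeq \wt X/\omega_n\wt X \simeq \wt\Cl_{K_n}$, which is finite. Combining both implications with input (c) yields the stated equivalence between Greenberg's conjecture and the capitulation of $\wt\Cl_K$ in $K^{\rm c}$.

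The main obstacle is not this module-theoretic core but input (c): the identification of the finiteness of the \emph{logarithmic} Iwasawa module $\wt X$ with the classical Greenberg condition $\lambda = \mu = 0$ for the cyclotomic $\Z_p$-extension of the totally real field $K$. Establishing it requires comparing, throughout the tower, the logarithmic class groups with the ordinary $p$-class groups, carefully controlling the contribution of the places above $p$; the finiteness of the $\wt\Cl_{K_n}$ itself is the Gross--Kuz'min conjecture, which is exactly why the statement is conditional on Gross--Leopoldt. I would treat this comparison via the interpretation of $\wt X$ in terms of universal norms and the structure results already cited (\cite{Ng6}\,[Nqd2018], \cite{Gr6}\,[Gr2017a], \cite{Ja6}\,[Jau2019b]), where the obstruction to $\lambda=\mu=0$ is shown to be governed precisely by $\wt\Cl_K$ and ${\mathcal T}_{K,P}$. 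A secondary point to verify carefully is the precise form of the codescent isomorphism and of the extension map in (a)--(b), since the logarithmic norm maps differ from the usual ones and one must check that $\nu_n$ is indeed the transition operator and that no spurious kernel or cokernel appears at the places dividing $p$.
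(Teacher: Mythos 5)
You should first be aware that the paper itself contains no proof of this statement: it is quoted, with attribution, from Jaulent's article \cite{Ja4}\,[Jau2018a] (Th\'eor\`eme 7), and the only indication given is the remark that the result ``may be deduced in the framework of Iwasawa's theory recalled in \S\,\ref{iwa}'' via \cite{Ng6}\,[Nqd2018] and the identification $X_\Gamma \simeq \wt \Cl_K$ of \eqref{X=C}. So the comparison can only be with that indicated route, which is also the architecture of Jaulent's published proof; measured against it, your two-step plan is faithful, and the step you actually prove is correct. Granting your inputs (a) and (b), capitulation at level $n$ is indeed the inclusion $\nu_n\wt X\subseteq\omega_n\wt X$; the forward direction works because $\omega_m\wt X=0$ for some $m$ and $\nu_n$ then acts as $p^{n-m}\nu_m$ on $\wt X$, killing this finite $p$-group for $n\gg m$; and in the converse, from $\nu_n\wt X\subseteq\omega_n\wt X=T\,\nu_n\wt X$ you correctly deduce $Z:=\nu_n\wt X=TZ$, hence $Z=0$ by (topological) Nakayama, so $\omega_n\wt X=TZ=0$ and $\wt X\simeq\wt X/\omega_n\wt X\simeq\wt\Cl_{K_n}$ is finite. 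This is exactly the formal lemma underlying Jaulent's theorem.

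The point I would press you on is the status of input (c), where, as you acknowledge, all the arithmetic weight sits. The equivalence ``Greenberg $\iff \wt X$ finite'' (equivalently $\tilde\lambda=\tilde\mu=0$ for the logarithmic tower) is not a routine comparison: it genuinely requires $K$ totally real together with the Leopoldt--Gross hypotheses. Indeed, the hard half --- finiteness of the everywhere-split (logarithmic) Iwasawa module forcing finiteness of the full unramified module --- is \emph{false} for general $K$: for an imaginary quadratic field in which $p$ splits, one has $\lambda\geq 1$ while the split module is finite (Gross--Kuz'min, known in the abelian case). In Jaulent's paper this equivalence is itself a theorem, proved by a level-by-level comparison of ordinary and logarithmic class groups; it is also what \cite{Ng6}\,[Nqd2018] and \cite{Ja6}\,[Jau2019b] supply, so citing them is legitimate rather than circular, and is consistent with the paper's own pointer. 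But you should state clearly that a self-contained proof must establish (c), and that it is there --- not in the Nakayama argument --- that the hypothesis ``totally real'' and the conditional (Gross--Leopoldt) character of the statement actually enter.
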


If Greenberg's conjecture is true (which is no doubt), such 
general condition of capitulation is very reassuring since
we recall that, on the contrary, the group ${\mathcal T}_{K,P}$ 
{\it never capitulates}. Moreover the property of capitulation
(well known in Hilbert's class fields) is more general 
for generalized ray class groups and, especially,
is possible in {\it absolute abelian extensions} as shown in many 
papers including \cite{Gr111}\,[Gr1997], Bosca \cite{Bos}\,[Bos2009],
then \cite{Ja55,Ja44}\,[Jau2018b-2019a]. 

\smallskip
This result may be deduced in the framework of Iwasawa's theory
recalled in the \S\,\ref{iwa} \cite[Th\'eor\`eme 2.1]{Ng6}\,[Nqd2018].

\smallskip
Unfortunately, at the time of writing this text, no proof of Greenberg's 
conjecture does exist, despite some unsuccessful attempts in
\cite{Ng4,Ng5}\,[Nqd2006-2017] (to understand the key-points of the $p$-adic 
obstruction to be analyzed and possibly completed, 
see \cite[\S\,3.4, Remark]{Ja4}\,[Jau2018a] and \cite[\S\,6.2, Diagram]{Gr61}\,[Gr2018b]).

\subsubsection{Galois representations with open image} \label{10}
\smallskip
For constructions by Greenberg, in \cite{Gre2}\,[Gre2016], of continuous Galois representations 
${\rm Gal}(\ov \Q/\Q) \to {\rm GL}_n(\Z_p)$ with open image, 
the $p$-rational fields play a great role, and the first obvious case is
that of $p$-regular cyclotomic fields $\Q(\mu_p)$ which are $p$-rational
(yet reported by \cite{Sa}\,[Sha1964], \cite{Gr1}\,[Gr1986], and generalized by 
introducing in \cite{GJ}\,[GJ1989] the notion of $p$-regularity of number fields that we do not 
develop in this paper, for short, but which behaves as $p$-rationality; 
see a survey in \cite{JN}\,[JN1993]).

\smallskip
Then, an interesting typical conjecture is the following:

\begin{conjecture}\cite[Conjecture 4.2.1]{Gre2}\,[Gre2016].
For any odd prime $p$ and for any $t \geq 1$, there exists a $p$-rational field $K$ 
such that ${\rm Gal}(K/\Q) \simeq (\Z/2 \Z)^t$. 
\end{conjecture}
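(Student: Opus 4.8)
The plan is to exploit the fact that, for an odd prime $p$, the degree $[K:\Q]=2^t$ of a candidate field $K$ with ${\rm Gal}(K/\Q)\simeq(\Z/2\Z)^t$ is prime to $p$, so that the whole question linearises over the character group of $G:={\rm Gal}(K/\Q)$. First I would note that Leopoldt's conjecture holds automatically for the abelian field $K/\Q$ (Brumer--Baker), so that $p$-rationality of $K$ means exactly $\mathcal{T}_{K,P}=1$. Since $|G|=2^t$ is invertible in $\Z_p$, the idempotents $e_\chi=\frac{1}{|G|}\sum_{g\in G}\chi(g^{-1})\,g$ attached to the characters $\chi\colon G\to\{\pm1\}\subset\Z_p^\times$ (all $\Q_p$-valued, as $p$ is odd) split the $\Z_p[G]$-module:
$$\mathcal{T}_{K,P}=\bigoplus_{\chi}e_\chi\,\mathcal{T}_{K,P}.$$
The trivial component $e_{\chi_0}\mathcal{T}_{K,P}=\mathcal{T}_{K,P}^{G}$ is isomorphic to $\mathcal{T}_{\Q,P}=1$, using the injectivity of the transfer map of \S\ref{3} together with the fact that the composite of the transfer with the norm is multiplication by the unit $|G|$.

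For a nontrivial $\chi$, let $k_\chi:=K^{\ker\chi}$ be the associated quadratic field. The same prime-to-$p$ argument gives $\mathcal{T}_{k_\chi,P}\simeq\mathcal{T}_{K,P}^{\ker\chi}$, and the characters of $G$ trivial on $\ker\chi$ are precisely $\chi_0$ and $\chi$ (as $G/\ker\chi\simeq\Z/2\Z$); hence $\mathcal{T}_{k_\chi,P}\simeq e_\chi\mathcal{T}_{K,P}$. Summing over the $2^t-1$ nontrivial characters yields the clean reduction
$$\mathcal{T}_{K,P}\simeq\bigoplus_{\chi\ne\chi_0}\mathcal{T}_{k_\chi,P},$$
so that $K$ is $p$-rational if and only if every one of its $2^t-1$ quadratic subfields is $p$-rational. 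The problem thus becomes: produce a subgroup $V\subseteq\Q^\times/(\Q^\times)^2$ of rank $t$ such that $\Q(\sqrt d\,)$ is $p$-rational for every nontrivial class $[d]\in V$.

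To construct such a $V$, I would take the $t$ generators among primes $\ell\equiv1\pmod p$ (or small squarefree twists thereof) whose decomposition in $\Q(\mu_p)$ and whose $p$-class-group behaviour I can prescribe by Chebotarev conditions. By the reflection description of Theorem \ref{tp=1}, triviality of $\mathcal{T}_{k,P}$ for a quadratic $k$ reduces, when $\mu_p\not\subset k$, to the vanishing of the relevant $\omega$-component of the $p$-class group of $k(\mu_p)$ together with $p$ being suitably non-split; these are finitely many Frobenius conditions on each generating prime. Imposing them forces the class-group part of $p$-rationality for all $2^t-1$ subfields simultaneously, and infinitely many admissible prime tuples exist by Chebotarev density. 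If all subfields could be taken imaginary this would conclude the argument, as there is then no regulator contribution.

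The hard part is that for $t\ge2$ real quadratic subfields are unavoidable: the compositum $\Q(\sqrt{-a},\sqrt{-b})$ always contains the real field $\Q(\sqrt{ab})$, so $V$ necessarily contains real classes. For a real quadratic field $k$, $p$-rationality additionally demands that the normalised $p$-adic regulator $\mathcal{R}_{k,P}$ be a $p$-adic unit, i.e. that the $p$-adic logarithm of the fundamental unit not be divisible by $p$ -- precisely the deep $p$-adic obstruction emphasised throughout this paper. Simultaneously controlling the $p$-adic valuations of the regulators of a whole rank-$t$ family of real quadratic fields lies beyond the reach of Chebotarev-type arguments, and this is exactly where the statement remains conjectural. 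A complete proof would require either an unconditional positive-density lower bound for $p$-rational quadratic fields, refined enough to guarantee a rank-$t$ subgroup, or an independence input (of the kind suggested by the Fermat-quotient heuristics recalled in \S\ref{9}) showing that the regulator conditions for the distinct subfields can be met together.
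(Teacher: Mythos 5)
This statement is Greenberg's Conjecture~4.2.1, which the paper only \emph{cites}: it gives no proof (none exists), just numerical evidence (Bradshaw's $3$-rational field of degree $64$, the PARI/GP verifications) and a probabilistic heuristic. So there is no proof to compare yours against, and you rightly stop short of claiming one. Your central reduction is correct and is exactly the equivalence the paper asserts in the remark closing \S\,\ref{10}: since $p$ is odd, $|G|=2^t$ is a $p$-adic unit, the character idempotents split ${\mathcal T}_{K,P}$, the trivial component dies because the transfer ${\mathcal T}_{\Q,P}\to{\mathcal T}_{K,P}$ is injective (\S\,\ref{3}) with ${\mathcal T}_{\Q,P}=1$ and norm-composed-with-transfer equal to the unit $|G|$, and taking $\ker\chi$-fixed points identifies $e_\chi{\mathcal T}_{K,P}$ with ${\mathcal T}_{k_\chi,P}$; hence $K$ is $p$-rational if and only if all $2^t-1$ quadratic subfields are $p$-rational, which is precisely what feeds the paper's heuristic probability $\big(\frac{1}{p}\big)^{2^t-1}$. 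You also locate the genuine obstruction in the same place the paper does: for $t\geq 2$ real quadratic subfields are unavoidable, and their $p$-rationality requires the normalized regulator ${\mathcal R}_{k,P}$ to be a unit, a condition on $p$-adic logarithms of fundamental units that no current method controls for a whole rank-$t$ family.

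One caution on the part of your sketch you present as routine: the vanishing of the $\omega$-component of the $p$-class group in Theorem~\ref{tp=1} is \emph{not} a Frobenius (Chebotarev) condition on the primes generating your group $V$ --- class-group indivisibility statements are of a different nature, which is why even the known infiniteness results for $p$-rational quadratic fields (Byeon, Assim--Bouazzaoui for $p=3,5$, or the $abc$-conditional counting of Maire--Rougnant recalled in \S\,\ref{11}) require modular-forms or $abc$-type inputs rather than density theorems. This does not affect your conclusion, since you correctly flag the statement as out of reach; it only means the ``easy half'' of your programme is also harder than Chebotarev.
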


Numerical examples and statistics have been given for various $p$ and $t$;
see \cite{Gre2}\,[Gre2016] and (by Robert Bradshaw) the $3$-rationality of:
$$K=\Q(\sqrt{13},\sqrt{145},\sqrt{209},\sqrt{269},\sqrt{373},\sqrt{-1}). $$

Some PARI/GP programs are given in Pitoun--Varescon \cite{Pi,PV}\,[Pit2010-PV2015], 
and \cite[\S\,5.2]{Gr8}\,[Gr2017c]
showing the $3$-rationality of:
$$K=\Q(\sqrt{-2},\sqrt{-5},\sqrt{7},\sqrt{17},\sqrt{-19},\sqrt{59}). $$

For fixed $p$ (e.g., $p=3$), the probability of $p$-rationality decreases 
dramatically when $t \to \infty$; indeed, if
${\rm Gal}(K/\Q) \simeq (\Z/2 \Z)^t$, $K$ is $p$-rational if
and only if the $2^t-1$ quadratic subfields $k$ of $K$ are $p$-rational
whose probability is of the order of $\big(\frac{1}{p}\big)^{2^t-1}$
assuming that class groups and units of each $k$ 
are random and largely independent regarding the $p$-adic properties.

\subsubsection{Order of magnitude of ${\mathcal T}_{K,P}$ and conjectures} \label{11} 
\smallskip
We have conjectured in \cite[Conjecture 8.11]{Gr5}\,[Gr2016] that for a fixed number 
field $K$, ${\mathcal T}_{K,P}=1$ for all $p \gg 0$. Moreover, all numerical calculations
show that the non-$p$-rationality constitutes an exception.

\smallskip
In another direction, fixing $p$ and taking $K$ in some given infinite family 
${\mathcal K}$ (e.g., real fields of given degree $d$) we have given extensive 
numerical computations in direction of the following 
``$p$-adic Brauer--Siegel'' property:

\begin{conjecture}\cite[Conjecture 8.1]{Gr10}\,[Gr2019a]. \label{BS}
There exists a constant 
${\mathcal C}_p({\mathcal K})$ such that:
$$v_p(\order {\mathcal T}_{K,p}) \leq {\mathcal C}_p({\mathcal K}) \cdot 
\Frac{{\rm log}_\infty(\sqrt{D_K})}{{\rm log}_\infty(p)}, $$ 

\noindent
for all $K \in {\mathcal K}$, where ${\rm log}_\infty$ is the usual complex 
logarithm.
\end{conjecture}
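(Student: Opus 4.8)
The plan is to start from the multiplicative decomposition \eqref{partial}, specialized to $S=P$ (so that ${\mathcal T}_{K,p}={\mathcal T}_{K,P}$ in the notation of the statement),
$$\order {\mathcal T}_{K,P} = \order \wt {\Cl_K\!}^{\!P} \cdot \order {\mathcal R}_{K,P} \cdot \order {\mathcal W}_{K,P},$$
and to bound the $p$-adic valuation of each of the three factors by a quantity of the order of ${\rm log}_\infty(\sqrt{D_K})/{\rm log}_\infty(p)$. The factor $\order {\mathcal W}_{K,P}$ is governed by the roots of unity of $p$-power order in the completions $K_{\mathfrak p}$; for $K$ of bounded degree $d$ and $p$ fixed its $p$-adic valuation is $O(1)$, contributing only to the constant ${\mathcal C}_p({\mathcal K})$. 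The factor $\order \wt {\Cl_K\!}^{\!P}$ is, by the diagram of \S\,\ref{section2}, a subgroup of $\Cl_K$, hence its order divides the $p$-part of $h_K$ and $v_p(\order \wt {\Cl_K\!}^{\!P}) \leq {\rm log}_\infty(h_K)/{\rm log}_\infty(p)$. Here I would invoke the classical class number formula $h_K R_K \ll \sqrt{D_K}\,({\rm log}_\infty D_K)^{d-1}$ together with the Remak--Friedman lower bound $R_K \gg 1$ (uniform in fixed degree); this yields ${\rm log}_\infty(h_K) \ll {\rm log}_\infty(\sqrt{D_K})$, so the class-group factor already obeys a bound of the required shape.

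Next I would reduce the regulator factor $\order {\mathcal R}_{K,P}$ to the classical $p$-adic regulator $R_p$. By the expression recalled in \S\,\ref{1} (cf.\ Remark \ref{rema1}),
$$\order {\mathcal R}_{K,P} \sim \frac{1}{2} \cdot \frac{\big(\Z_p : {\rm log}({\rm N}_{K/\Q}(U_{K,P}))\big)}{\order {\mathcal W}_{K,P} \cdot \prod_{{\mathfrak p} \mid p}{\rm N}{\mathfrak p}} \cdot \frac{R_p}{\sqrt D},$$
so that, up to a $p$-adic unit, $v_p(\order {\mathcal R}_{K,P}) = v_p(R_p) - \tfrac{1}{2}\,v_p(D) + O(1)$, the index and the norms ${\rm N}{\mathfrak p}=p^{f_{\mathfrak p}}$ contributing only $O(1)$ for bounded $d$, while $\tfrac12 v_p(D)$ vanishes whenever $p$ is unramified in $K$. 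Equivalently, via the Coates--Colmez formula of \S\,\ref{1}, one may identify $v_p(\order {\mathcal T}_{K,P})$ with the $p$-adic valuation of the residue at $s=1$ of $\zeta_{K,p}$, which for totally real $K$ is expressible, through Siegel--Klingen rationality and Kubota--Leopoldt, by special values $\zeta_K(1-n)$. Either route concentrates the whole problem on a single quantity: the $p$-adic size of the $p$-adic regulator (equivalently of the $p$-adic $L$-value).

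The hard part is therefore to bound $v_p(R_p)$ from above by a multiple of ${\rm log}_\infty(\sqrt{D_K})/{\rm log}_\infty(p)$. This is precisely the obstruction measuring how far $K$ is from being $p$-rational: $R_p \ne 0$ holds under Leopoldt's conjecture, but a priori $R_p$ may be divisible by an arbitrarily high power of $p$. I would try two complementary attacks. On the arithmetic side, writing $R_p = \det\big({\rm log}_p(\sigma_i\varepsilon_j)\big)$ for a system of fundamental units, the entries are $p$-integral up to a bounded denominator, so large $v_p(R_p)$ forces strong $p$-adic congruences among the ${\rm log}_p(\varepsilon_j)$; the number and strength of independent such congruences ought to be limited by the ``$p$-adic complexity'' of the unit lattice, which one hopes to control through height bounds on the $\varepsilon_j$ expressed in terms of $D_K$. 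On the analytic side, for totally real fields one would instead bound $v_p(\zeta_K(1-n))$ using the functional equation, which relates $\zeta_K(1-n)$ to $\zeta_K(n)$ together with $\sqrt{D_K}$ and elementary factors, thereby feeding the discriminant into the estimate.

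I expect the genuine obstacle to be exactly this last passage: obtaining an \emph{upper} bound on $v_p(R_p)$ (or on $v_p$ of the $p$-adic $L$-value) uniform over the family ${\mathcal K}$. Unlike the class-number factor, where the archimedean class number formula gives two-sided control, there is no analogue furnishing a lower bound on $|R_p|_p$ in terms of $D_K$; the random-matrix and Borel--Cantelli heuristics of \cite[\S\,8]{Gr5}\,[Gr2016] strongly support the inequality (with a generic exponent far below the conjectured linear bound), but a rigorous argument seems to demand a new input linking archimedean and $p$-adic regulators, of a depth comparable to Leopoldt's conjecture itself. Consequently I would first establish the statement unconditionally for families with explicit units, such as the simplest cubic or the Lecacheux--Washington families of \cite[\S\,9.1]{Gr10}\,[Gr2019a], where $R_p$ can be written down and its valuation estimated directly, and only afterwards attempt to remove the explicitness.
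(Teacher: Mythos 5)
You should first note what this statement is: it is a \emph{conjecture} (Conjecture \ref{BS}, quoted from [Gr2019a, Conjecture 8.1]). The paper offers no proof of it — immediately after stating it, the author writes that ``these questions being out of reach'', only heuristic and numerical evidence is given. So there is no proof in the paper to compare yours against, and a complete blind proof would in fact settle an open problem.

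Your proposal, as you yourself concede in the final paragraphs, is not such a proof; it is a reduction, and the reduction relocates the entire content of the conjecture into the one step you cannot do. The easy parts are handled correctly: for fixed $p$ and a family of bounded degree, $v_p(\order {\mathcal W}_{K,P})$ is $O(1)$ (roots of unity in local fields of bounded degree over $\Q_p$), and $v_p(\order \wt {\Cl_K\!}^{\!P}) \leq v_p(h_K) \ll {\rm log}_\infty(\sqrt{D_K})/{\rm log}_\infty(p)$ does follow from $h_K R_K \ll \sqrt{D_K}\,({\rm log}_\infty D_K)^{d-1}$ together with $R_K \gg_d 1$. But after \eqref{partial} everything then rests on an upper bound for $v_p(R_p)$, and neither of your two attacks can close that gap. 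The arithmetic attack founders because archimedean height bounds on fundamental units carry no $p$-adic information whatsoever: the assertion that the $\log_p(\varepsilon_j)$ cannot satisfy arbitrarily deep congruences is a transcendence-theoretic statement (a quantitative refinement of Leopoldt's conjecture), and even where $p$-adic Baker--Brumer--Yu lower bounds for linear forms in $p$-adic logarithms apply (abelian $K$, where $R_p$ factors into linear forms via characters), they give $v_p(R_p) \ll_{p,d} ({\rm log}_\infty D_K)^{\kappa}$ with $\kappa>1$ growing with the number of logarithms — polynomial, not the linear bound required. The analytic attack fails for a structural reason: the functional equation controls the \emph{archimedean} size of $\zeta_K(1-n)$, and an archimedean bound on a rational number with bounded denominator controls its $p$-adic valuation only through its numerator; to relate $v_p(\order {\mathcal T}_{K,P})$ (essentially the residue of $\zeta_{K,p}$ at $s=1$) to an actual interpolation value $\zeta_K(1-n)$ up to precision $p^{M+1}$, one must take $n \equiv 0 \pmod{(p-1)p^{M}}$ with $M$ at least as large as the valuation $V$ being bounded, and then the archimedean estimate $|\zeta_K(1-n)| \approx D_K^{\,n-1/2}\big((n-1)!/(2\pi)^n\big)^{d}$ is exponentially large in $p^{V}$: the resulting inequality has the shape $V \leq C\, p^{V} {\rm log}_\infty D_K$, which is vacuous. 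This circularity — archimedean data cannot see $p$-adic depth at the residue — is precisely why the statement remains a conjecture, supported in [Gr2019a] only by Borel--Cantelli-type heuristics and by computations in explicit families (Lecacheux--Washington), which incidentally show that $v_p(\order {\mathcal T}_{K,P})$ really can be a positive proportion of ${\rm log}_\infty(\sqrt{D_K})/{\rm log}_\infty(p)$, so no bound of smaller order could hold.
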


Thus there are two questions about $C_p(K):= 
\Frac{v_p(\order {\mathcal T}_{K,P}) \cdot {\rm log}_\infty(p)}{{\rm log}_\infty(\sqrt{D_K})}$
and the quantities ${\mathcal C}_p := \ds\sup_K(C_p(K))$, 
${\mathcal C}_K :=\ds \sup_p(C_p(K))$:

\quad (i) The existence of ${\mathcal C}_K < \infty$, for a given $K$, only says that
the Conjecture ``${\mathcal T}_{K,P}=1\ \forall p \gg 0$'' is true for the field $K$; 
for this field, we get $\ds\limsup_{p}(C_p(K)) = 0$.

\quad (ii) If ${\mathcal C}_p < \infty$ does exist for a given $p$,
we have an universal $p$-adic analog of Brauer--Siegel theorem 
(the above Conjecture \ref{BS}).

\smallskip
These questions being out of reach, many results give, on the contrary, 
the infinteness of primes $p$ yielding the $p$-rationality of a field $K$,
in general under the $a\,b\,c$ conjecture, following the method 
given by Silverman \cite{Si}\,[Sil1988], Graves--Murty \cite{GM}\,[GM2013], 
Boeckle--Guiraud--Kalyanswamy--Khare
\cite{BGKK}\,[BGKK2018], Maire--Rougnant \cite{MR2}\,[MR2019b]; for instance:

\begin{theorem}\cite[Corollary to Theorem A]{MR2}\,[MR2019b]. Let $K$ be a real quadratic 
field or an imaginary $S_3$-extension. If the generaalized 
abc-conjecture holds for $K$, then $\order\big\{p \leq x,\ \hbox{$K$ is $p$-rational} \big\} 
\geq c \cdot {\rm log}(x)$ as $x \to \infty$, for some constant $c >0$ depending on $K$.
\end{theorem}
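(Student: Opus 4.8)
The plan is to reduce $p$-rationality of $K$ to a single Wieferich-type congruence on the fundamental unit, and then to count the primes satisfying it by the method of Silverman and Graves--Murty \cite{Si}\,[Sil1988], \cite{GM}\,[GM2013] under the generalized abc-conjecture for $K$, following \cite{MR2}\,[MR2019b].

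First I would exploit the common feature of the two families: each has unit rank $r_1+r_2-1=1$, since a real quadratic field has signature $(2,0)$ and an imaginary $S_3$-cubic (a non-Galois cubic whose Galois closure has group $S_3$ and which carries a complex place) has signature $(1,1)$. Moreover Leopoldt's conjecture holds unconditionally here: quadratic fields are abelian over $\Q$, while the Galois closure of an imaginary $S_3$-cubic is cyclic over its imaginary quadratic resolvent, hence abelian over an imaginary quadratic field, so Brumer's theorem applies. Thus $p$-rationality is literally the assertion $\mathcal{T}_{K,P}=1$. Writing $E_K=\mu_K\times\langle\varepsilon\rangle$ with a single fundamental unit $\varepsilon$, I would invoke Theorem \ref{triviality}, the mass formula \eqref{TLog} for $\order\mathcal{T}_{K,P}$, and the identification of $\mathcal{R}_{K,P}$ with the classical $p$-adic regulator (Remark \ref{rema1}) to conclude that, for all but finitely many $p$ (namely $p\in\{2,3\}$ where $\mathcal{W}_{K,P}$ may intervene, the $p$ ramified in $K$, and the $p\mid h_K$ entering the denominator of \eqref{TLog}), one has $\mathcal{T}_{K,P}=1$ if and only if the normalized regulator is a $p$-adic unit, i.e.\ $v_{\mathfrak p}\big({\rm log}_{\mathfrak p}(\varepsilon)\big)$ is minimal for the primes $\mathfrak p\mid p$. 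Second I would make this explicit: for $p$ odd and unramified, $\varepsilon^{{\rm N}\mathfrak p-1}\in U_{\mathfrak p}^1$ and ${\rm log}_{\mathfrak p}(\varepsilon)=({\rm N}\mathfrak p-1)^{-1}{\rm log}_{\mathfrak p}\big(\varepsilon^{{\rm N}\mathfrak p-1}\big)$, so minimality of $v_{\mathfrak p}({\rm log}_{\mathfrak p}(\varepsilon))$ is equivalent to the non-Wieferich condition $\varepsilon^{{\rm N}\mathfrak p-1}\not\equiv 1\pmod{\mathfrak p^{2}}$. Hence, away from a finite $K$-dependent exceptional set, the $p$-rational primes are exactly the non-Wieferich primes of $\varepsilon$, a condition read off the factorization of $\varepsilon^{n}-1$ in $\mathcal{O}_K$.

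Third I would run the counting over $K$. Since $\varepsilon$ has infinite order, $\log\big|{\rm N}_{K/\Q}(\varepsilon^{n}-1)\big|=c_\varepsilon\,n+O(1)$ for a constant $c_\varepsilon>0$. For each $n$, the generalized abc-conjecture for $K$, applied to the unit triple $1+(\varepsilon^{n}-1)=\varepsilon^{n}$ whose radical is supported on the primes dividing $\varepsilon^{n}-1$ (as $\varepsilon$ is a unit), gives $c_\varepsilon n\leq(1+\epsilon)\log{\rm rad}_K(\varepsilon^{n}-1)+O(1)$, equivalently $\sum_{\mathfrak p\mid(\varepsilon^{n}-1)}\big(v_{\mathfrak p}(\varepsilon^{n}-1)-1\big)\log{\rm N}\mathfrak p\leq\epsilon\,c_\varepsilon n+O(1)$. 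Summing over $n\leq N:=\lfloor(\log x)/c_\varepsilon\rfloor$ bounds the total Wieferich excess by $O(\epsilon N^{2})$; since the primitive part of $\varepsilon^{n}-1$ has logarithmic size comparable to $c_\varepsilon\,\phi(n)$, this forces all but $O(\epsilon N)$ of the $n\leq N$ to carry a primitive prime divisor $\mathfrak p$ with $v_{\mathfrak p}(\varepsilon^{n}-1)=1$. By the lifting-the-exponent relation $v_{\mathfrak p}\big(\varepsilon^{{\rm N}\mathfrak p-1}-1\big)=v_{\mathfrak p}(\varepsilon^{n}-1)$ for $n={\rm ord}_{\mathfrak p}(\varepsilon)$, such a divisor is non-Wieferich; these primes are pairwise distinct (a primitive divisor has order exactly $n$) and satisfy $p\leq{\rm N}\mathfrak p\leq\big|{\rm N}_{K/\Q}(\varepsilon^{n}-1)\big|\leq x$. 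Taking $\epsilon$ small thus yields $\gg N\gg\log x$ distinct non-Wieferich --- whence $p$-rational --- primes $p\leq x$, which is the asserted bound.

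The \textbf{main obstacle} is the first step: establishing, uniformly in $p$, the equivalence between $\mathcal{T}_{K,P}=1$ and the single congruence $\varepsilon^{{\rm N}\mathfrak p-1}\not\equiv1\pmod{\mathfrak p^{2}}$. This requires controlling the splitting type of $p$ (split, inert, or the partial splitting $1+2$ in the cubic case), the class-group denominator in \eqref{TLog}, and the factor $\mathcal{W}_{K,P}$ at $p=2,3$, and showing that each of these interferes only for finitely many, field-dependent primes, so that they may be absorbed into the exceptional set without spoiling the $\gg\log x$ lower bound. Once this arithmetic reduction is secured, the analytic part is the Silverman--Graves--Murty argument transcribed verbatim to the number field $K$.
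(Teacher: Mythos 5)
Your proposal is correct and takes essentially the same route as the result's source: the paper itself gives no proof of this statement (it quotes it from Maire--Rougnant [MR2019b]) but explicitly attributes the method to Silverman/Graves--Murty style abc-counting, and your reduction of $p$-rationality of these unit-rank-one fields to the non-Wieferich condition $\varepsilon^{{\rm N}\mathfrak p-1}\not\equiv 1\pmod{\mathfrak p^{2}}$ on the fundamental unit, followed by the count of primitive prime divisors of $\varepsilon^{n}-1$ under the generalized abc-conjecture, is exactly that argument. The issues you flag as the main obstacle (splitting type of $p$, the class-number denominator in \eqref{TLog}, the factor $\mathcal{W}_{K,P}$, and the fact that one non-Wieferich prime $\mathfrak p$ above $p$ already forces $\Z_p{\rm log}_P(\varepsilon)$ to be a direct summand of ${\rm log}_P(U_{K,P})$) indeed only exclude a finite, $K$-dependent set of primes, so they do not affect the $\gg \log x$ lower bound.
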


\smallskip
This shows the awesome distance between the two aspects of the problem;
indeed, for $K=\Q(\sqrt 5)$, no prime number (up to $p < 10^{14}$ from 
Elsenhans--Jahnel: $\ $ \url{https://oeis.org/A060305}) is known giving 
${\mathcal T}_{K,P} \ne 1$.

\smallskip
In another viewpoint, as in \cite{By}\,[By2003] (after some works of Hartung, 
Horie, Naito) and \cite{AB}\,[AB2018], it is shown the infiniteness of 
$p$-rational real quadratic fields for $p=3$ and $p=5$.

\subsubsection{Fermat curves}
\smallskip
To study Fermat curves of exponent $p$, one uses the base field 
$K=\Q(\mu_p)$ and works in some Kummer extensions; for instance:

\smallskip
\quad (i) Shu \cite{Shu}\,[Shu2018] gives general formulae of the root numbers of the 
Jacobian varieties of the Fermat curves $X^p+Y^p=\delta$, where 
$\delta$ is an integer, and studies their distribution. In this article
the Vandiver conjecture or the regularity of $p$ implies some
precise properties of the Selmer groups of these Jacobian varieties.

\smallskip
\quad (ii) Davis--Pries \cite{DP}\,[DP2018] work in $P$-ramified Kummer extensions
of $K$ with $P=\{{\mathfrak p} =(1-\zeta_p)\}$, as follows. 
Let $L \subset H_{K,P}$ be defined by:
$$L = K \big(\sqrt[p]{\zeta_p}, \sqrt[p]{1-\zeta_p}, \cdots, 
\hbox{$\sqrt[p]{1-\zeta_p^r}$}\big), \ \, r=\hbox{$\frac{p-1}{2}$},$$

The Kummer radical of $L$ is also generated by the real
cyclotomic units and the numbers $\zeta_p$, $1-\zeta_p$.
In the same way as previously, non-Vandiver's conjecture or 
non-regularity for $p$ are crucial obstructions.

\smallskip
Under the {\it Vandiver conjecture}, this radical is of $p$-rank $r+1$ since it is
then given by $E_K \cdot \langle 1-\zeta_p \rangle$ modulo $K^{\times p}$.

\smallskip
Under the {\it regularity of $p$}, we get ${\mathcal T}_{K,P} = 1$
(reflection theorem \eqref{reflection}) and $L$ is the maximal $p$-elementary 
subextension of $H_{K,P}$; $L/K$ being $p$-ramified, whence $p$-primitively 
ramified (\S\,\ref{5}), this gives the $p$-rationality of $L$.

\smallskip
Let $E$ be the maximal $p$-elementary subextension of $H_{L,P}$; 
since ${\mathcal T}_{L,P} = 1$ with $E/L$ $p$-ramified, 
we then have ${\mathcal T}_{E,P} = 1$ 
and ${\rm rk}_p({\rm Gal}(E/L)) = r \cdot p^{r+1}+1$. 
One can deduce that $\Cl_L=\Cl_E=1$ since $E/K$ is totally 
ramified at ${\mathfrak p}$ (Theorem \ref{tp=1} and Chevalley's 
formula in any successive $p$-cyclic extensions in $E/K$).

\smallskip
In simple cases as $p=37$, where $\order \Cl_K=p$ and where 
$H_K \subseteq L$ in which $p$ splits, the formula of Theorem \ref{thmf}
gives ${\rm rk}_p({\mathcal T}_{L,P})= {\rm rk}_p(\Cl_L^P) + p-1$, whence
${\rm rk}_p({\rm Gal}(E/L)) = r \cdot p^{r+1}+2r+1+{\rm rk}_p(\Cl_L^P)$ 
depending on $\Cl_L^P$, a priori unknown.

\smallskip
The purpose of \cite{DP}\,[DP2018] is to get information on ${\rm H}^1({\rm Gal}(E/K),M)$
for some ${\rm Gal}(E/K)$-modules $M$, subquotients of the relative homology 
$H_1(U, Y ; \F_p)$ of the Fermat curve, where $U$ is the affine curve 
$x^p + y^p = 1$ and $Y$ the set of $2p$ cusps where $xy = 0$. They 
completely elucidate the case $p=3$.

\subsection{Computational references and numerical tables}\label{12}
\smallskip
Many references may be cited:

\smallskip
The first table for the computation of $\order {\mathcal T}_{K,P}$ for imaginary 
quadratic fields is that of Charifi \cite{Cha}\,[Cha1982], using formula \eqref{TLog}. 
In Hatada \cite{Hat1,Hat2}\,[Hat1987-1988] the computations correspond to statistics 
about the values (modulo $p$) 
of the normalized regulator ${\mathcal R}_{K,P}$ of real fields as
$K=\Q(\sqrt 5)$ by the way of Fibonacci numbers and values at 
$2-p$ of zeta-functions as we have mentioned in \S\,\ref{1}.
He obtains for instance that $\Q(\sqrt 2)$ is $p$-rational for all
$p \leq 20000$, except $p=13, 31$ (our program gives the next 
exception $p=1546463$ up to $10^8$). 

\smallskip
A precise study of $p$-rationality of imaginary quadratic fields is 
given by  Angelakis--Stevenhagen in \cite[Section 7]{AS}\,[AS2013].

\smallskip
A wide study of ${\mathcal T}_{K,P}$, with tables and publication of PARI/GP
programs, is done by Pitoun \cite[Chapitre 4]{Pi}\,[Pit2010], but these more conceptual 
programs are not so easy to manage by the reader. Then some statistical results
with tables are given by Pitoun--Varescon in \cite{PV}\,[PV20015].

\smallskip
In \cite{HZ}\,[HZ2016] Hofmann--Zhang compute the valuation of the 
(usual) $p$-adic regulators
of cyclic cubic fields with discriminant up to $10^{16}$, for $3 \leq p \leq 100$, 
and observe the distribution of these valuations.

\smallskip
About the conjecture of Greenberg \cite{Gre2}\,[Gre2016]
Kraft--Schoof \cite{KS}\,[KS1995] have computed such Iwasawa's invariants and confirm 
the conjecture for $p=3$ and conductors $f$ of real quadratic fields 
$f \not\equiv 1 \pmod 3$ up to $10^4$. 
In \cite{Gr8}\,[Gr2017c], some heuristics on the conjecture and numerical examples 
are given with programs; then we illustrate the following conjecture of Hajir--Maire
\cite[Conjecture 4.16]{HM1}\,[HM2018b]:

\smallskip
{\it Given a prime $p$ and an integer $m \geq 1$, coprime to $p$, there exist a
totally imaginary field $K_0$ and a degree $m$ cyclic extension 
$K/K_0$ such that $K$ is $p$-rational; it is conjectured that the 
statement is true taking for $K_0$ an
imaginary $p$-rational quadratic field.}

\smallskip
In \cite[Table 1, \S\,2]{BR}\,[BR2017], Barbulescu--Ray give explicit $p$-rational large 
compositum of quadratic fields. We may cite some works by Bouazzaoui \cite{Bou}\,[Bou2018], 
El Habibi--Ziane \cite{ElHZ}\,[ElHZ2018] based on $p$-rationality of quadratic fields.

\smallskip
In the similar context of $p$-ramification, a new  PARI/GP pakage allows the 
computation of the logarithmic class group $\wt \Cl_K$ of a number field by 
Belabas--Jaulent \cite{BJ0}\,[BJ2016] that we can illustrate as follows
where the invariants ${\sf [X, Y, Z]}$ are linked by the exact sequence:
$$1 \to X \too Y := \wt \Cl_K \too Z := \Cl_K^P := 
\Cl_K/\langle \cl_K(P)\rangle \to 1. $$

\footnotesize
\begin{verbatim}
{P=x^2+3;bp=2;Bp=10^8;K=bnfinit(P,1);print("P=",P);
forprime(p=bp,Bp,H=bnflog(K,p);if(H!=[[],[],[]],print("p=",p,"  ",H)))}

P=x^2 + 3
p=13       [[13], [13], []]
p=181      [[181], [181], []]
p=2521     [[2521], [2521], []]
p=76543    [[76543], [76543], []]
p=489061   [[489061], [489061], []]
p=6811741  [[6811741], [6811741], []]

P=x^2 + 5
p=5881     [[5881], [5881], []]
\end{verbatim}
\normalsize

These are the only solutions for $p < 10^8$.
More computations would give heuristics to see if the analogous
conjecture: ``$\wt \Cl_K=1$ for all $p \gg 0$'', is credible or not
since the rarefaction of non-trivial cases is similar to that of the 
groups ${\mathcal T}_{K,P}$.

\smallskip
The case of real quadratic fields is studied in \cite[\S\,5.2]{Gr6}\,[Gr2017a] with a table
and in \cite[\S\,2.4]{Ja4}\,[Jau2018a], after the work of Ozaki--Taya \cite{OT}\,[OT1995] and others.

\smallskip
In another direction, the paper \cite{MR1}\,[MR2019a] of Maire--Rougnant gives examples 
of triviality of isotopic components of the torsion groups ${\mathcal T}_{K,P}$;
more precisely the fields $K$ are cyclic extensions of $\Q$ of degrees $3$ and $4$
(from polynomials of Balady, Lecacheux, Balady--Washington) and $S_3$-extensions
of $\Q$.

\smallskip
In \cite{Gr10}\,[Gr2019a], are given numerous programs to test some heuristics and conjectures
about the order of magnitude of the groups ${\mathcal T}_{K,P}$ in totally real
number fields in a Brauer--Siegel framework.

\subsection{Conclusion and open questions}
\smallskip
 In all the aspects of $p$-rationality that we have developed
(theoretical and computational), some interesting applications are done today, including for
instance, for the most recent ones, \cite{HM1}\,[HM2018b] by Hajir--Maire on the $\mu$-invariant
in Iwasawa's theory, then \cite{HMR1}\,[HMR2019a] by Hajir--Maire--Ramakrishna, showing the 
existence of $p$-rational fields having large $p$-rank of the class group, or \cite{HMR2}\,[HMR2019b]
about the existence of a solvable number field $L$, $P$-ramified, whose 
$p$-Hilbert class field tower is infinite. See the bibliographies of these articles to 
expand the list of contributions.

\smallskip
Of course it is not possible to evoque all the studied families of pro-$p$-groups 
having some logical links with $S$-ramification (with more general sets $S$
regarding $P$) as, for instance, that of ``mild groups'' introduced by 
Labute \cite{Lab}\,[Lab2006] (and \cite{LM}\,[LM2011] for the case $p=2$)
dealing with the numbers of generators $d(G)$ and
of relations $r(G)$ and defined as follows: 

\smallskip
{\it A class of finitely presented pro-$p$-groups $G$ of 
cohomological dimension $2$ such that $r(G)\geq d(G)$ and $d(G) \geq 2$ 
arbitrary}. 

\smallskip
Many articles where then published giving an overview of
the wide variety of such groups as the following short excerpt of a result
of Schmidt about global fields \cite[Theorem 1.1]{Sch}\,[Sch2010]:

\smallskip
{\it Let $S, T, {\mathcal M}$ be pairwise disjoint sets of places of $K$, where $S$ and $T$ 
are finite and ${\mathcal M}$ has Dirichlet density $0$. Then there exists a  finite set of places
$S_0$ of $K$ which is disjoint from $S \,\cup\, T \,\cup\, {\mathcal M}$ and such that the 
group ${\mathcal G}_{K,S\,\cup\, S_0}^T$ has cohomological dimension $2$.}

\medskip
But let's go back to the basic abelian invariants, asking some open questions:

\medskip
\quad (i) We know the fixed point formula in a $p$-extension $L/K$ (under 
the conjecture of Leopoldt), but, even in a $p$-cyclic extension with Galois group $G$,
and contrary to the case of $p$-class groups
(as done in \cite{Gr45}\,[Gr2017b] after a very long history), we do not know how to 
compute the filtration 
$(M_i)_{i \geq 0}$, of $M:={\mathcal T}_{K,P}$, defined inductively by:
$$\hbox{$M_0=1$ and $M_{i+1}/M_i := (M/M_i)^G$, for all $i\geq 0$.} $$

\quad (ii) The explicit computation of the $p$-rank, $\wt r_{K,S}^{}$ \eqref{rank}, 
of ${\mathcal A}_{K,S}/{\mathcal T}_{K,S}$ for $S \subseteq P$, is available 
only in favorable Galois cases with an algebraic reasoning on the
canonical representation $\Q_p {\rm log}_S^{}(E_K)$ given by the Herbrand 
theorem on units under Leopoldt's conjecture (see \S\,\ref{comprtild}).

\smallskip
\quad (iii) In the definition of ${\mathcal W}_{K,S}:= 
W_{K,S} /{\rm tor}_{\Z_p}^{}(\ov {E\,}^S_{\!\!K})$,
we do not know how to compute ${\rm tor}_{\Z_p}^{}(\ov {E\,}^S_{\!\!K}) 
\supseteq \iota_S^{}(\mu_K)$ when $S \subsetneq P$.
We ignore, in a $p$-adic framework, if Leopoldt's conjecture is
sufficient to obtain the responses apart from a Galois context. 

\smallskip
A reasonable conjecture is that ${\rm tor}_{\Z_p}^{}(\ov {E\,}^S_{\!\!K}) = \iota_S^{}(\mu_K)$
whatever $K$, $p$ and $S$; but this must be deepened.

\medskip
We hope that our programs in \S\,\ref{PP} may help to give heuristics about this.

\section*{Note}\label{note}
In the programs in verbatim text, one must replace the symbol
of power (in a\^{}b) by the corresponding PARI/GP symbol 
(which is nothing else than that of the computer keyboard); otherwise the 
program does not work (this is due to the character font used by some Journals).
The good print for the programs is also available at:

\url{https://www.dropbox.com/s/1srmksbr2ujf40i/Incomplete%20p-ramification.pdf?dl=0}

\section*{Acknowledgments} I would like to thank Christian Maire and 
Jean-Fran\-\c cois Jaulent for fruitful discussions and information concerning 
some aspects of pro-$p$-groups and $S$-ramification.

\end{document}